\newtheorem{theorem}{Theorem}[section]
\newtheorem{lemma}[theorem]{Lemma}
\newtheorem{proposition}[theorem]{Proposition}
\newtheorem{corollary}[theorem]{Corollary}
\newtheorem{conjecture}[theorem]{Conjecture}
\theoremstyle{definition}
\newtheorem{definition}[theorem]{Definition}
\newtheorem{remark}[theorem]{Remark}
\newtheorem{example}[theorem]{Example}
\newtheorem*{rep@theorem}{\rep@title}
\newcommand{\newreptheorem}[2]{%
\newenvironment{rep#1}[1]{%
 \def\rep@title{\Cref{##1}}%
 \begin{rep@theorem} \itshape}%
 {\end{rep@theorem}}}
\newcommand{\Z}{{\ensuremath{\mathbb{Z}}}}
\newcommand{\Q}{{\ensuremath{\mathbb{Q}}}}
\newcommand{\mfs}{\mathfrak{s}}
\newcommand{\mft}{\mathfrak{t}}
\newcommand{\dtwist}{\underline{d}}
\newcommand{\lk}{\operatorname{lk}}
\newcommand{\spinc}{\text{Spin}^c}
\author[Tye Lidman]{Tye Lidman}
\thanks{The first author was partially supported by a Sloan Fellowship and NSF grants DMS-1709702 and DMS-2105469.}
\address{Department of Mathematics, North Carolina State University, Raleigh, NC 27607}
\email{tlid@math.ncsu.edu}
\author[Allison N. Miller]{Allison N.  Miller}
\thanks{The second author was partially supported by NSF grant DMS-1902880.}
\address{Department of Mathematics and Statistics, Swarthmore College, Swarthmore, PA 19081}
\email{amille11@swarthmore.edu}
\author[Juanita Pinz\'on-Caicedo]{Juanita Pinz\'on-Caicedo}
\thanks{The third author was partially supported by Simons Collaboration grant 712377.}
\address {Department of Mathematics, University of Notre Dame, Notre Dame, IN 46556}
\email{jpinzonc@nd.edu}
\title{Linking number obstructions to satellite homomorphisms}
\numberwithin{equation}{section}
\begin{document}

\begin{abstract}
We prove that satellite operations that satisfy a certain positivity condition and have winding number other than one  are not homomorphisms.  The argument uses the $d$-invariants of branched covers.  In the process, we prove a technical result relating $d$-invariants and the Torelli group which may be of independent interest.  
\end{abstract}

\maketitle

\section{Introduction}\label{sec:intro}
Two oriented knots in $S^3$ are called concordant if they cobound a smoothly embedded annulus in $S^3 \times I$.  The relation of concordance is an equivalence relation on the set of knots,  and the resulting collection of equivalence classes is denoted by $\mathcal{C}$.  There have been many applications of concordance to low-dimensional topology: for example,  the existence of a knot that is non-trivial in $\mathcal{C}$ but bounds a locally flat topologically embedded disk in $B^4$ implies the existence of an exotic $\mathbb{R}^4$ \cite{GS}.  
Interestingly, $\mathcal{C}$ takes on the structure of an abelian group under connected sum~\cite{FoxQuickTrip} and there has been a large amount of work on analyzing the algebraic structure of this group~\cite{FoxMilnor, Murasugi, LevineInvariants,LevineKnotCobordismGroups, LivingstonNaikObstructing,CassonGordonOnSlice,CassonGordonCobordism,  JiangBoJu,  LivingstonOrder2}. The classical satellite construction gives well-defined functions on $\mathcal{C}$ which are much studied \cite{CochranDavisRay, CochranHarveyGeometry,  CochranHarveyLeidy, DavisRay, HeddenPinzon, LevineNonsurjective,  MillerPiccirillo,  RaySatelliteIterates}. However,  the following conjecture of Hedden suggests that satellite maps on $\mathcal{C}$ almost never interact nicely with the connected sum group structure.  

\begin{conjecture}[Hedden \cite{BIRS16},\cite{MPIM16}]\label{conj:hedden}
Let $P$ be a pattern knot in the solid torus.  Let $[P]: \mathcal{C} \to \mathcal{C}$ be the associated operation which takes $[K]$ to $[P(K)]$.  If $[P]$ is a group homomorphism, then $[P]$ is either the zero, the identity,  or the reversal map. 
\end{conjecture}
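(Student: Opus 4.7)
The plan is to show $[P]$ is not a homomorphism by finding a concordance invariant on which the relation $[P(K_1 \# K_2)] = [P(K_1) \# P(K_2)]$ would fail. Classical invariants such as Levine--Tristram signatures behave too linearly under satellites to obstruct this, so I would use the Heegaard Floer $d$-invariants of cyclic branched covers: they descend to concordance, are additive under connected sum of $\Q$-homology spheres, and are sensitive to $\spinc$ structures --- giving many numerical quantities per knot rather than a single one.

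First, I would obtain an explicit geometric description of $\Sigma_n(P(K))$ for a suitable prime $n$. This cover splits along the preimage of a meridional disk of the companion torus into $\Sigma_n(P(U))$ (minus a regular neighborhood) glued to $\gcd(n, w)$ copies of an appropriate cover of the exterior of $K$. The decomposition should then yield a formula for $d(\Sigma_n(P(K)), \mathfrak{s})$ in terms of the $d$-invariants of $\Sigma_n(K)$ and a pattern-dependent correction, with $\spinc$ structures on the pieces inherited from $\mathfrak{s}$.

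Second, I would compare this formula against the consequence of the homomorphism hypothesis: if $[P]$ is a homomorphism, then $\Sigma_n(P(K_1 \# K_2))$ and $\Sigma_n(P(K_1)) \# \Sigma_n(P(K_2))$ are $\Q$-homology cobordant, so their $d$-invariants agree on corresponding $\spinc$ structures. Plugging in a test knot $K$ whose cover is an $L$-space with strictly one-signed $d$-invariants --- a family supplied by the positivity condition on $P$ --- should produce numerical equations across $\spinc$ structures that are violated whenever $|w| \neq 1$, yielding the obstruction.

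The main obstacle is in step one. The gluing of the pieces of $\Sigma_n(P(K))$ is not determined by the smooth topology alone: one must choose an identification of the bounding surfaces, which can have higher genus when $\gcd(n,w) > 1$, and the $d$-invariants can in principle depend on this choice. Establishing that $d$ is invariant under gluings differing by elements of the Torelli group of the splitting surface is the technical result the abstract singles out as of independent interest, and it is what makes the formula in step one well-defined. Pushing past the positivity hypothesis, or handling winding-number-one patterns in full generality, looks genuinely harder and likely requires essentially different invariants.
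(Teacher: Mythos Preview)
The statement you are attempting to prove is \emph{Hedden's conjecture}, which the paper explicitly leaves open; there is no proof in the paper to compare against. The paper's contribution is a collection of partial results (Theorems~\ref{thm:mainnullhom}, \ref{thm:mainextended}, \ref{thm:maincomp}) that verify the conjecture only for patterns satisfying the stated linking-number positivity hypothesis and with winding number $w\neq 1$. Your closing paragraph already concedes that the positivity hypothesis and the winding-number-one case lie beyond the method, so what you have written is not a proof of the conjecture but at best a sketch of the paper's own partial strategy.

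Even as a sketch of that strategy, several points are off. First, the paper does not derive a \emph{formula} for $d(\Sigma_q(P(K)),\mfs)$; it obtains only an upper bound $d_{\max}(\Sigma_q(P(K)))\leq d(\Sigma_2(C_{2,1}(K)))+C_{P,q}$, and the argument proceeds by showing this bound goes to $-\infty$ along $K=T_{2,2k+1}$ and $K=-T_{2,2k+1}$ simultaneously. Second, the decomposition of $\Sigma_q(P(K))$ uses $q$ copies of the knot exterior $E_K$ itself (with $q\mid w$), not covers of $E_K$, and the gluing is along tori, so there is no higher-genus splitting surface and no ambiguity of the sort you describe. Third, the role of the Torelli result (\Cref{thm:d-torelli}) is not to make a gluing formula well-defined; rather, it bounds the change in $d$ under clasper surgery, which is the step used to transform the framed link $(\Sigma_q(P(U)),\cup\eta_i)$ into the model pair $(S^3,H_q)$ after null-homologous surgeries and negative-definite twists have normalized the linking matrix. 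Finally, the obstruction is not extracted from equations across many $\spinc$ structures on an $L$-space, but from the single inequality $d_{\max}(\Sigma_q(J_k))<0$ for $J_k=P(T_{2,2k+1})\#P(-T_{2,2k+1})$, which precludes $\Sigma_q(J_k)$ from bounding a rational homology ball.
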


Many patterns can be obstructed from inducing a homomorphism by the unexciting requirement that $P(U)$ must be a slice knot.  However,   if $P(U)$ is slice then classical invariants such as the Alexander polynomial and the Tristram-Levine signature function cannot obstruct $P$ from inducing a homomorphism on $\mathcal{C}$~\cite{Litherland, Allison}. 
The first interesting non-homomorphism in the literature was therefore not identified until work of Gompf~\cite{Gompf},  who showed that the Whitehead pattern does not induce a homomorphism on $\mathcal{C}$,  despite being identically zero on the topological concordance group.  
Later,  Levine~\cite{LevineNonsurjective} and Hedden~\cite{HeddenCablingII} used the $\tau$ invariant from Heegaard Floer homology to show that the Mazur pattern and $(n,1)$ cables ($n>1$) also do not induce homomorphisms. 
 Most recently, A. Miller \cite{Allison} used Casson-Gordon signatures to give an obstruction to patterns inducing a homomorphism even on the topological concordance group.   

Our first main result already implies Hedden's conjecture for a large collection of patterns.  For a pattern $P$ in the solid torus and prime power $q$,  the $q$-fold cyclic branched cover of $S^3$ along $P(U)$ is a rational homology sphere $\Sigma_q(P(U))$. When $q$ divides the winding number of $P$,  the curve $\eta$ that bounds a meridional disk in the solid torus lifts to a $q$-component framed oriented link $\eta_1 \cup \dots \cup \eta_q$ in $\Sigma_q(P(U))$.  For any two distinct components $\eta_i$ and $\eta_j$ the linking number $\lk(\eta_i, \eta_j)$ is a rational number.   

\begin{theorem}\label{thm:mainnullhom}
Let $P$ be a pattern in the solid torus with winding number $w$,  and let $q$ be a prime power dividing $w$. 
Suppose that the lifts $\eta_1, \dots,\eta_q$ of $\eta$ to $\Sigma_q(P(U))$
are  null-homologous in $\Sigma_q(P(U))$.  
If   $\lk(\eta_i,\eta_j) \geq 0$ for all $i \neq j$, but is not identically zero,  then $P$ does not induce a homomorphism of the smooth concordance group.
\end{theorem}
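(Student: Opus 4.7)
The plan is to argue by contradiction: assume $[P]\colon\mathcal{C}\to\mathcal{C}$ is a group homomorphism. Two immediate consequences drive the argument. First, $[P(U)]=[P](0)=0$, so $P(U)$ is smoothly slice; hence $\Sigma_q(P(U))$ bounds a smooth rational homology $4$-ball, and its $d$-invariants vanish on all $\spinc$ structures which extend across this ball. Second, for each $n\geq 1$ the knots $P(\#^n K)$ and $\#^n P(K)$ are concordant, so their $q$-fold branched covers are $\spinc$ rational-homology cobordant, forcing an identity of $d$-invariants of the form
\[
d\bigl(\Sigma_q(P(\#^n K)),\mfs'\bigr)\;=\;n\cdot d\bigl(\Sigma_q(P(K)),\mfs\bigr)
\]
for suitably matched $\spinc$ structures. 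The strategy is to violate this identity for $n\gg 0$.

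Both sides should be computed using the description of $\Sigma_q(P(J))$ as the result of simultaneously infecting $\Sigma_q(P(U))$ along the $q$ null-homologous meridian lifts $\eta_1,\dots,\eta_q$ by the knot $J$. The key intermediate step---the ``$d$-invariants and the Torelli group'' technical result advertised in the abstract---is a formula expressing $d(\Sigma_q(P(J)),\mfs)$ in terms of $d(\Sigma_q(P(U)),\mfs_0)$, the Heegaard--Floer $V$-invariants of $J$, and the linking matrix $\Lambda=(\lk(\eta_i,\eta_j))$. I would obtain it by presenting the infection cobordism through Kirby moves, reducing the problem to a surgery along a framed link whose linking form is governed by $\Lambda$, and then applying the surgery formula for $d$-invariants. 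The leading correction should be a quadratic expression of the shape $V_{*}(J)^2\cdot c\cdot\sum_{i,j}\lk(\eta_i,\eta_j)$ for a positive constant $c$; the hypothesis that $\lk(\eta_i,\eta_j)\geq 0$ with at least one strict inequality ensures this sum is strictly positive (the diagonal can be arranged to be nonnegative by the canonical Seifert framing).

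Substituting $J=\#^n K$ and using the linear scaling $V_0(\#^n K)\sim n V_0(K)$ then produces a correction of order $n^2$. By contrast, $\#^n\Sigma_q(P(K))$ has its $nq$ meridian lifts distributed across $n$ disjoint summands of $\Sigma_q(P(U))$, so its total linking matrix is block-diagonal with $n$ copies of $\Lambda$, yielding only a linear-in-$n$ correction. Choosing $K$ with $V_0(K)>0$---for instance $K=T_{2,3}$---the identity above becomes, at leading order,
\[
-c n^2\cdot V_0(K)^2\sum_{i,j}\lk(\eta_i,\eta_j)\;=\;-c n\cdot V_0(K)^2\sum_{i,j}\lk(\eta_i,\eta_j),
\]
which fails for all sufficiently large $n$. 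The main obstacle will be deriving the quadratic $d$-invariant formula with the correct sign: the multi-infection has to be repackaged as a surgery on a specific framed link built from the Seifert surfaces of the $\eta_i$'s in $\Sigma_q(P(U))$, and the change of basis of $H_2$ of the cobordism must be controlled carefully---this is where the Torelli-group formalism enters---so that the quadratic form read off from $\Lambda$ comes out with the sign dictated by the positivity hypothesis on the $\lk(\eta_i,\eta_j)$.
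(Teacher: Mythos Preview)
Your proposal hinges on a ``quadratic $d$-invariant formula'' of the shape $d(\Sigma_q(P(J)),\mfs) \approx d_0 - c\,V_0(J)^2\sum_{i,j}\lk(\eta_i,\eta_j)$, and this is the genuine gap: no such formula exists, and in fact it contradicts known computations. Surgery formulas for $d$-invariants (Ni--Wu and its generalizations) are \emph{linear} in the $V_i$-invariants of the infecting knot, not quadratic. Already in the model case $P=C_{2,1}$, $q=2$, one has $\Sigma_2(C_{2,1}(J))=S^3_1(J\# J^r)$ and $d(S^3_1(J\# J^r))=-2V_0(J\# J^r)$; taking $J=\#^n K$ gives growth at most linear in $n$, since $V_0$ is subadditive under connected sum. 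Thus both sides of your comparison $d(\Sigma_q(P(\#^n K)))$ versus $n\cdot d(\Sigma_q(P(K)))$ are linear in $n$, and the proposed $n^2$-versus-$n$ contradiction does not materialize. You also misread the role of the Torelli result: it is not a device for deriving a surgery formula from the linking matrix, but only a uniform bound on how much a single clasper surgery can change $d$.

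The paper's route is quite different and avoids any exact computation of $d(\Sigma_q(P(K)))$. It builds a chain of operations on the pair $(\Sigma_q(P(U)),\bigcup_i\eta_i)$---null-homologous surgeries, $(-1)$-twists along unknots linking two $\eta_i$'s, and clasper surgeries---terminating at the standard pair $(S^3,H_q)$, where $H_q$ is a $(-1)$-framed Hopf link split from a $0$-framed unlink. Each operation, when performed in the presence of the infection by $E_K$, changes $d$ by an amount bounded \emph{independently of $K$} (surgery and clasper steps) or only decreases it (the negative-definite twisting step; this is precisely where the positivity hypothesis on $\lk(\eta_i,\eta_j)$ is consumed). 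The output is a uniform inequality $d_{\max}(\Sigma_q(P(K)))\leq d(\Sigma_2(C_{2,1}(K)))+C_{P,q}$. The contradiction then comes from a different pair of knots than you propose: one takes $K=T_{2,2k+1}$ and $-T_{2,2k+1}$, uses that $d(S^3_1(T_{2,2k+1}\# T_{2,2k+1}))+d(S^3_1(-T_{2,2k+1}\# -T_{2,2k+1}))=-2k$, and concludes $d_{\max}(\Sigma_q(P(T_{2,2k+1})\# P(-T_{2,2k+1})))\to -\infty$, incompatible with sliceness.
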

Since a pattern $P$ induces a homomorphism exactly when the mirrored pattern $\bar{P}$ does, ~\Cref{thm:mainnullhom} can also be used to obstruct a pattern with $\lk(\eta_i, \eta_j) \leq 0$ for all $i \neq j$ from inducing a homomorphism. 

\begin{example}
Let $P$ denote the positive Whitehead doubling pattern.  Then, $\Sigma_2(P(U))$ is $S^3$ and $\eta_1 \cup \eta_2$ is the $-T(2,4)$ torus link.  Similarly,  for any $n>1$ and $q$ any prime power dividing $n$,  we have that $\Sigma_q(C_{n,1}(U))=S^3$ and the lifts of $\eta$ are the $T(n,q)$ torus link. 
We therefore see that \Cref{thm:mainnullhom} recovers the results of~\cite{Gompf} and~\cite{HeddenCablingII}  that Whitehead doubling and the $(n,1)$ cable map do not induce homomorphisms on concordance. 
\end{example}

Previous work has shown that if $\eta_1, \dots, \eta_q$ generate the first homology of $\Sigma_q(P(U))$,  then $P$ does not induce a homomorphism ~\cite[Theorem A]{Allison}.  
Our work was motivated in part by a desire to obstruct homomorphisms when each component $\eta_1, \dots, \eta_q$ is null-homologous,  but we are also able to  strengthen ~\Cref{thm:mainnullhom} to the case when the lifts $\eta_1, \dots,\eta_q$ are not null-homologous in $\Sigma_q(P(U))$. 
\begin{theorem}\label{thm:mainextended}
Let $P$ be a pattern in the solid torus with winding number $w$,  and let $q$ be a prime power dividing $w$.  Let $\eta_1, \dots,\eta_q$ denote the lifts of $\eta$ to $\Sigma_q(P(U))$,  and let $n$ denote the order of $[\eta_1]$ in $H_1(\Sigma_q(P(U)))$.
Suppose that either $n$ is odd or $w$ is a nonzero multiple of $n$.
If  $\lk(\eta_i,\eta_j) \geq 0$ for all $i \neq j$, but is not identically zero,
then $P$ does not induce a homomorphism of the smooth concordance group.
\end{theorem}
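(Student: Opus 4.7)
The plan is to adapt the $d$-invariant argument proving \Cref{thm:mainnullhom} to the setting where the lifts $\eta_i$ carry nontrivial homology, using the additional hypothesis on $n$ to control the resulting $\spinc$-structure algebra. To begin, if $P$ induces a homomorphism then $[P(U)] = 2[P(U)]$ in $\mathcal{C}$, forcing $P(U)$ to be slice; let $W$ denote the rational homology ball obtained as the $q$-fold cyclic cover of $B^4$ branched along a slice disk, so that $\partial W = \Sigma_q(P(U))$. For any auxiliary knot $K$, the satellite description realizes $\Sigma_q(P(K))$ as the result of surgery on the framed lift $\eta_1 \cup \cdots \cup \eta_q \subset \Sigma_q(P(U))$ using $q$ copies of the $q$-fold cyclic branched cover of $K$ with an open ball removed.

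The homomorphism hypothesis, applied to connected sums, produces for each $r \geq 1$ a $\spinc$ rational homology cobordism from $\Sigma_q(P(rK))$ to the $r$-fold connected sum of $\Sigma_q(P(K))$. Comparing this concordance-induced cobordism with the satellite surgery construction, one obtains, for suitable $\spinc$ structures, an identity in which the $d$-invariant of $\Sigma_q(P(rK))$ grows linearly in $r$ by connected-sum additivity, but is bounded from above via the standard $d$-invariant inequality for negative definite bounding $4$-manifolds. The relevant upper bound involves the linking matrix of the framed link $\eta_1 \cup \cdots \cup \eta_q$; its off-diagonal entries are exactly the $\lk(\eta_i, \eta_j)$, which by hypothesis are nonnegative and not identically zero. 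Choosing $K$ so that the $d$-invariant of $\Sigma_q(K)$ is sufficiently large (for instance a positive torus knot), the quadratic dependence on $r$ coming from these linking numbers eventually outpaces the linear growth on the other side, yielding the contradiction.

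The case hypotheses on $n$ enter at the level of $\spinc$-structure bookkeeping across the surgery cobordism. When $n=1$, as in \Cref{thm:mainnullhom}, the $\eta_i$ are null-homologous and $\spinc$ structures extend naturally; in the present generality, the extensions are parameterized by classes of order dividing $n$, and one must exhibit a consistent choice whose pairing with the $[\eta_i]$ is tractable. The oddness of $n$ allows the relevant quadratic refinement in the $d$-invariant formula to be inverted modulo $2$, while the condition that $w$ be a nonzero multiple of $n$ forces $\sum_i [\eta_i]$ to enjoy enough divisibility that a rational bounding $4$-manifold in which the $[\eta_i]$ are null-homologous can be constructed and the surgery $d$-invariant formula applied. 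The main obstacle, I anticipate, is the technical result relating $d$-invariants to the Torelli group promised in the abstract: to reconcile the satellite surgery cobordism with the concordance-induced cobordism, one likely needs to modify the natural boundary identifications by elements of the Torelli group and then verify that such modifications do not perturb the $d$-invariant computation. Once this is in hand, the linear-versus-quadratic comparison closes the argument exactly as in the null-homologous case.
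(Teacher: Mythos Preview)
Your proposal misses the central idea and, as written, does not close. You attempt to adapt the $d$-invariant argument of \Cref{thm:mainnullhom} directly to the non-null-homologous setting via $\spinc$ bookkeeping, but the paper explicitly remarks (opening of \Cref{sec:nonnullhomologous}) that ``the arguments of the previous subsection do not generalize well'' here. The actual proof takes a completely different route: it reduces to the null-homologous case by \emph{composing $P$ with an auxiliary pattern}. The key input is \Cref{prop:composition}: if $R$ has winding number divisible by $n$, then in $\Sigma_q((P\circ R)(U))$ the lifts of $\eta^{P\circ R}$ are null-homologous and their pairwise linking numbers equal $w_R^2$ times those of the original $\eta_i$, so the sign hypotheses transfer and \Cref{thm:mainnullhom} applies to $P\circ R$ (this is \Cref{thm:maincomp}). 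The two case hypotheses simply supply $R$: when $n$ is odd, take $R$ to be the $(n,1)$ alternating cable, a known pseudo-homomorphism of winding number $n$, so $P\circ R$ failing forces $P$ to fail; when $n\mid w$, take $R=P$, so $P\circ P$ failing forces $P$ to fail.

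Your sketch never introduces a composition, and the mechanisms you propose for exploiting the hypotheses on $n$---inverting a quadratic refinement mod $2$ when $n$ is odd, or building a rational bounding $4$-manifold in which the $[\eta_i]$ become null-homologous when $n\mid w$---are speculative and not substantiated. The ``linear-versus-quadratic'' growth comparison you outline (via $r$-fold connected sums of $P(K)$) is also not the paper's strategy, which instead lets $k\to\infty$ in $P(\pm T_{2,2k+1})$ and uses explicit surgery formulas. Finally, the Torelli result is used only inside the proof of \Cref{thm:mainnullhom}, which here is invoked as a black box; no further Torelli input is needed for \Cref{thm:mainextended}.
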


\Cref{thm:mainextended} arises as an almost immediate corollary of the following somewhat peculiar result.  Note that we say a pattern $P$ induces a \emph{pseudo-homomorphism} if $P(U)$ is slice and  $P(-K)$ is concordant  to $-P(K)$ for all knots $K$.  There exist pseudo-homomorphisms of all odd winding numbers, for example the so-called ``$(n,1)$ alternating cable,'' given by the $n$-strand braid $\prod_{i=1}^{n-1} \sigma_i^{(-1)^{i+1}}$~\cite{Allison}, see \Cref{fig:composition}.

\begin{theorem}\label{thm:maincomp}
Let $P$ be a pattern in the solid torus with winding number $w$ and $q$ a prime power dividing $w$. Denote the $q$ lifts of $\eta$ to $\Sigma_q(P(U))$ by $\eta_1,\dots,\eta_q$. 
Suppose that for $i \neq j$, the numbers $\lk(\eta_i, \eta_j)$ are nonnegative and not identically zero,  and let $n$ be the order of $[\eta_1]$ in $H_1(\Sigma_q(P(U))$. 
If $R$ is any pattern whose winding number is a nonzero multiple of $n$, then the composition $P \circ R$ does not induce a pseudo-homomorphism. 
\end{theorem}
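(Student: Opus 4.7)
The plan is to assume, for contradiction, that $P \circ R$ induces a pseudo-homomorphism, and derive a violation of the standard rational-homology-ball $d$-invariant obstruction applied to the $q$-fold cyclic branched cover of a suitable slice knot. By hypothesis $P(R(K)) \# P(R(-K)) = (P\circ R)(K) \# (P\circ R)(-K)$ is slice for every knot $K$, and since $q \mid w \mid w\cdot w_R$, the branched cover is well-defined; so
\[
X_K := \Sigma_q(P(R(K)))\,\#\,\Sigma_q(P(R(-K)))
\]
bounds a rational homology 4-ball. This produces a Lagrangian subgroup $L \leq H_1(X_K)$ on which the $d$-invariants of a matching coset of $\spinc$-structures must vanish.

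The first structural step is to identify $\Sigma_q(P(R(K)))$ with the result of $q$ infection operations on $\Sigma_q(P(U))$: for each $i$ one removes the solid torus lift $V_\infty^{(i)}$ around $\eta_i$ and glues in a copy of the knot exterior $S^3 \setminus N(R(K))$ via the standard longitude-to-meridian identification. Because infection is local, one verifies that under the canonical identification $H_1(\Sigma_q(P(R(K)))) \cong H_1(\Sigma_q(P(U)))$ the class $[\eta_i]$ retains its $n$-torsion order and the pairwise rational linking numbers $\lk(\eta_i,\eta_j)$ are unchanged. In particular the hypotheses on $n$ and on the signs of $\lk(\eta_i,\eta_j)$ transfer wholesale to $\Sigma_q(P(R(K)))$ for every $K$.

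The heart of the argument should be a change-of-$d$-invariant formula for this infection, schematically of the form
\[
d(\Sigma_q(P(R(K))),\mathfrak{s}) - d(\Sigma_q(P(U)),\mathfrak{s}) \;=\; \sum_{i,j} c_{ij}(\mathfrak{s})\,\lk(\eta_i,\eta_j)\cdot \Phi(R(K)),
\]
where $\Phi$ is a Heegaard Floer $V$-type invariant of $R(K)$ and the coefficients $c_{ij}(\mathfrak{s})$ are determined by the pairings of $\mathfrak{s}$ with the classes $[\eta_i]$. The divisibility $n \mid w_R$ is needed here precisely so that $\spinc$-structures on the infected pieces $S^3 \setminus N(R(K))$ extend, compatibly with the $n$-torsion of $[\eta_i]$, back to a single $\spinc$-structure on $\Sigma_q(P(U))$, so that the formula applies to the $\spinc$-structures lying in the Lagrangian coset from above. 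I expect this step, powered by the technical Torelli-group/$d$-invariant result advertised in the abstract, to be the main obstacle.

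Given such a formula, the conclusion is routine. Take $K = T(2,2m+1)$ with $m$ large. Then $\Phi(R(K))$ grows linearly in $m$ while $\Phi(R(-K))$ remains bounded, reflecting the familiar sign-asymmetry of $V_0$ under mirror reversal. Because $\lk(\eta_i,\eta_j) \geq 0$ and at least one is strictly positive, there is no cancellation across the $P(R(K))$ summand of $X_K$, so some $d$-invariant at a $\spinc$-structure in the Lagrangian coset grows without bound in $m$. This eventually contradicts the vanishing forced by the rational-ball obstruction, completing the proof.
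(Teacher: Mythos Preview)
Your outline has the right endgame---show that $d_{\max}\bigl(\Sigma_q((P\circ R)(K))\#\Sigma_q((P\circ R)(-K))\bigr)<0$ for large torus knots $K$, contradicting the rational-ball bound---but the central step is not a proof, it is a wish.  You write down a schematic ``change-of-$d$-invariant formula''
\[
d(\Sigma_q(P(R(K))),\mathfrak{s}) - d(\Sigma_q(P(U)),\mathfrak{s}) \;=\; \sum_{i,j} c_{ij}(\mathfrak{s})\,\lk(\eta_i,\eta_j)\cdot \Phi(R(K))
\]
and then say you ``expect this step \ldots\ to be the main obstacle.''  No such exact formula is proved anywhere in the paper (nor, to my knowledge, in the literature); what is actually available is the \emph{inequality} of \Cref{prop:boundingd}, and that inequality is only established when the infection curves are \emph{null-homologous}.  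Your setup infects $\Sigma_q(P(U))$ along the $\eta_i^P$, which by hypothesis have order $n>1$ in homology, so the machinery does not apply as written.  There is also a minor confusion: after you infect along the $\eta_i$, those curves no longer sit in the resulting manifold, so speaking of ``$[\eta_i]$ retaining its $n$-torsion order'' in $\Sigma_q(P(R(K)))$ does not quite parse.

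The paper sidesteps all of this with one observation you are missing: do not view $\Sigma_q((P\circ R)(K))$ as infection of $\Sigma_q(P(U))$ along $\eta_i^P$ by $R(K)$; view it instead as infection of $\Sigma_q((P\circ R)(U))$ along the lifts $\eta_i^{P\circ R}$ by $K$.  \Cref{prop:composition} shows that under the isomorphism $H_1(\Sigma_q((P\circ R)(U)))\cong H_1(\Sigma_q(P(U)))$ the class $[\eta_i^{P\circ R}]$ corresponds to $w_R[\eta_i^P]$, and since $n\mid w_R$ these lifts are \emph{null-homologous}.  The same proposition gives $\lk(\eta_i^{P\circ R},\eta_j^{P\circ R})=w_R^2\,\lk(\eta_i^P,\eta_j^P)$, so the sign hypothesis transfers intact.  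Now the composite pattern $P\circ R$ satisfies the hypotheses of \Cref{thm:mainnullhom} on the nose, and that theorem (already proved via \Cref{prop:boundingd}) finishes the job in one line.  The divisibility condition $n\mid w_R$ is used precisely here---to kill the homology class of the new infection curves---not, as you suggest, to extend $\spinc$ structures across a hypothetical formula.
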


\begin{proof}[Proof of ~\Cref{thm:mainextended},  assuming~\Cref{thm:maincomp}.]
If $n$ is odd,  let $Q$ be the ``$(n,1)$ alternating cable'' given by the $n$-strand braid $\prod_{i=1}^{n-1} \sigma_i^{(-1)^{i+1}}$.  Note that $Q(U)=U$  is slice and $Q$ is isotopic to $-Q$ in the solid torus,  and hence $Q(-K)$ is isotopic to $-Q(K)$ for all knots $K$~\cite{Allison}.  So $Q$ does induce a pseudo-homomorphism, but 
\Cref{thm:maincomp}  implies that $P \circ Q$ does not induce a pseudo-homomorphism. We  therefore have our desired conclusion. 

If $w$ is a nonzero multiple of $n$, then \Cref{thm:maincomp} implies that $P \circ P$ does not induce a pseudo-homomorphism,  and so we obtain our desired conclusion.
\end{proof}

\Cref{thm:mainextended} gives us a particularly nice consequence when $q=2$. 
\begin{corollary}
Let $P$ be a pattern in the solid torus with even winding number such that the two lifts of $\eta$ to $\Sigma_2(P(U))$ have nonzero rational linking number. 
Then $P$ does not induce a homomorphism. 
\end{corollary}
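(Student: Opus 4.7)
The plan is to deduce this from \Cref{thm:mainextended} applied with $q=2$. Since $w$ is even, $q=2$ is a prime power dividing $w$ and $\eta$ lifts to two components $\eta_1,\eta_2 \subset \Sigma_2(P(U))$; writing $n$ for the order of $[\eta_1]$ in $H_1(\Sigma_2(P(U)))$, there are exactly two things to verify: the arithmetic condition on $n$ and the nonnegativity condition on the pairwise linking numbers.

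The key observation for the first condition is the classical fact that the knot determinant $|\Delta_K(-1)|=|H_1(\Sigma_2(K);\Z)|$ is odd for every knot $K$; this follows immediately from $\Delta_K(t)\equiv 1 \pmod{2}$. Applying this to $K = P(U)$, every element of $H_1(\Sigma_2(P(U)))$ has odd order, so in particular $n$ is odd and the first alternative of the hypothesis on $n$ in \Cref{thm:mainextended} holds automatically, with no need to examine the relationship between $w$ and $n$.

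For the linking number condition, set $\ell = \lk(\eta_1,\eta_2) \neq 0$. When $\ell>0$, there is only one pairwise linking number to consider, so the nonnegativity-and-not-identically-zero condition holds trivially and \Cref{thm:mainextended} gives that $P$ does not induce a homomorphism. When $\ell<0$, I instead apply the argument to the mirror pattern $\bar{P}$: since $\Sigma_2(\bar{P}(U))=-\Sigma_2(P(U))$ as oriented manifolds and rational linking numbers reverse sign under orientation reversal of the ambient rational homology sphere, the corresponding linking number for $\bar{P}$ is $-\ell>0$. Hence $\bar{P}$ fails to be a homomorphism by the previous case, which by the observation following \Cref{thm:mainnullhom} is equivalent to $P$ failing to be a homomorphism. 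The only real insight required is the oddness of the knot determinant, which removes the need to ever verify the ``$w$ is a nonzero multiple of $n$'' alternative; everything else is a direct invocation of \Cref{thm:mainextended}, modulo a single mirror step when the linking number is negative.
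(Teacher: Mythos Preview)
Your proof is correct and follows essentially the same approach as the paper: the paper's entire argument is the single observation that $|H_1(\Sigma_2(K))|$ is always odd, so $n$ is odd and \Cref{thm:mainextended} applies. You are simply more explicit than the paper on two points the authors leave implicit---the reason the determinant is odd, and the mirror step handling the case $\ell<0$ (which the paper covers only by the remark following \Cref{thm:mainnullhom}).
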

\begin{proof}
The order of the first homology of the double branched cover of $S^3$ along a knot is always odd, and hence the order of $[\eta_1]$ in $H_1(\Sigma_2(P(U)))$ is odd. 
\end{proof}

The rough strategy to prove \Cref{thm:mainnullhom} is as follows.  
If $P$ acts as a homomorphism, then the knot $J_k=P(T_{2,2k+1}) \# P(-T_{2,2k+1})$ must be slice, and so the $q$-fold cyclic branched cover of $J_k$ would bound a rational homology ball, and hence have some Heegaard Floer $d$-invariants vanishing. (For the relevant background on the $d$-invariants see \Cref{sec:d-invariants}).  In particular,  this would imply that the maximal $d$-invariant of $\Sigma_q(J_k)$ would be non-negative. The linking hypothesis allows us to show that for sufficiently large $k$,  all  $d$-invariants of $\Sigma_q(J_k)$ are highly negative,   thereby obstructing $P$ from inducing a homomorphism. 

In order to carry out the $d$-invariant bound, we prove the following technical result which may be of interest to those working in Heegaard Floer homology and/or mapping class groups.  
\begin{theorem}\label{thm:d-torelli}
Let $\phi$ be an element of the Torelli group for a closed surface $\Sigma_g$.  Express $\phi$ as a product of $N$ separating Dehn twists and/or bounding pair maps.  There exists a constant $C_N$ depending only on the length of this word (and $g$) 
with the following property.  Let $Y$ be any homology sphere and $Y_\phi$ the result of removing an embedded (parameterized) genus $g$ surface in $Y$ and regluing by $\phi$.  Then, 
\[
|d(Y) - d(Y_\phi)| \leq C_N.
\]
\end{theorem}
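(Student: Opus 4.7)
The plan is to bound the $d$-invariant change for each individual Torelli generator and then combine via the triangle inequality. Writing $\phi = \phi_N \circ \cdots \circ \phi_1$ with each $\phi_i$ a separating Dehn twist or bounding pair map, and setting $Y_0 := Y$ and $Y_i := (Y_{i-1})_{\phi_i}$, a Mayer--Vietoris calculation using that each $\phi_i$ acts trivially on $H_1(\Sigma_g)$ shows every $Y_i$ is again a homology sphere. Thus it suffices to produce a constant $c_g$, depending only on $g$, such that $|d(Y_{i-1}) - d(Y_i)| \leq c_g$ for each single step; the theorem then follows with $C_N = N c_g$.

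Next, I would realize each elementary step as a surgery cobordism with controlled topology. Classically, cutting $Y_{i-1}$ along $\Sigma$ and regluing by $T_\gamma^{\pm 1}$ is equivalent to integer Dehn surgery on a pushoff of $\gamma$ with coefficient determined by the $\Sigma$-framing. When $\gamma$ is separating, it bounds a subsurface of $\Sigma$ of genus at most $g$ which serves as a Seifert surface for $\gamma$ in $Y_{i-1}$ whose framing coincides with the $\Sigma$-framing; hence $Y_i$ is $\pm 1$-surgery on a null-homologous knot, and the trace cobordism $W_i$ has $b_2(W_i)=1$ and definite intersection form $\langle \pm 1 \rangle$. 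When $(\gamma_1,\gamma_2)$ is a bounding pair, both components are individually null-homologous in the homology sphere $Y_{i-1}$, and the trace cobordism $W_i$ has $b_2(W_i)=2$; since $Y_i$ is also a homology sphere, the intersection form of $W_i$ is unimodular over $\mathbb{Z}$, hence one of $\langle 1\rangle \oplus \langle 1\rangle$, $\langle -1\rangle \oplus \langle -1\rangle$, or $\langle 1\rangle \oplus \langle -1\rangle$. In every case $\chi(W_i)$, $\sigma(W_i)$, and the minimal value of $|c_1(\mathfrak{s})^2|$ over Spin$^c$ structures on $W_i$ that restrict to the unique Spin$^c$ structures on the boundary are bounded by universal constants.

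To bound $|d(Y_{i-1}) - d(Y_i)|$, I would apply Ozsv\'ath--Szab\'o's inequality for negative definite cobordisms between rational homology spheres. When $W_i$ is definite, this yields a bound in one direction, and the reverse cobordism (realized by surgery on the dual knot, which is again a null-homologous knot with framing $\mp 1$) gives the matching bound in the other direction. When $W_i$ is indefinite (the hyperbolic case $\langle 1 \rangle \oplus \langle -1 \rangle$ that can arise from a bounding pair), I would decompose $W_i$ as the union of two single-handle cobordisms through an intermediate rational homology sphere and apply the inequality to each piece in its definite direction, combining the two one-sided bounds to control $|d(Y_{i-1}) - d(Y_i)|$. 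Since all of the cobordism invariants involved are bounded by functions of $g$, this yields $|d(Y_{i-1}) - d(Y_i)| \leq c_g$ and hence the theorem.

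The main obstacle is controlling the $d$-invariant change in the bounding pair case when the trace cobordism is indefinite: a single Ozsv\'ath--Szab\'o definite inequality only produces a one-sided bound, so obtaining a two-sided bound on $|d(Y_{i-1}) - d(Y_i)|$ requires either invoking the full Heegaard Floer surgery formula (which expresses $d(Y_{\pm 1}(K))$ in terms of knot Floer invariants like $V_0(K)$, bounded by the Seifert genus) or carefully combining one-sided bounds for the two elementary pieces in the decomposition, while simultaneously verifying that the intermediate rational homology sphere has bounded $|H_1|$ so that the inequality applies. Ensuring that the resulting constant truly depends only on $g$ and $N$, rather than on subtler invariants of how $\Sigma_g$ sits inside $Y$, is the crux of the argument.
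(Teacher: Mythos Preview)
Your reduction to single generators and your treatment of the separating Dehn twist case are correct and match the paper's argument: a separating curve bounds a subsurface of $\Sigma_g$, hence has Seifert genus at most $g$, and the $\pm 1$-surgery bound from the paper's Proposition~3.5 applies uniformly.

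The genuine gap is in the bounding pair case, and it is not just a matter of care. The linking matrix of $K_+\cup K_-$ with respect to Seifert framings is
\[
\begin{pmatrix} -\ell-1 & \ell \\ \ell & -\ell+1 \end{pmatrix},\qquad \ell=\lk(K_+,K_-),
\]
which has determinant $-1$, so the trace cobordism is \emph{always} indefinite of type $\langle 1\rangle\oplus\langle -1\rangle$; the two definite possibilities you list never occur. More importantly, both of your proposed fixes fail for the same reason: $\alpha_\pm$ are non-separating in $\Sigma_g$, so $\Sigma_g$ gives no Seifert surface for $K_\pm$ individually, and their Seifert genera in $Y$ are completely uncontrolled by $g$. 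Thus the Ni--Wu/$V_0$-type surgery formula gives no uniform bound. Your decomposition into two single-handle cobordisms fares no better: the intermediate manifold is $Y_{-\ell-1}(K_+)$, whose surgery coefficient depends on $\ell$, which in turn depends on the embedding of $\Sigma_g$ in $Y$ and is unbounded; moreover $K_-$ represents $\ell\in\mathbb{Z}/(\ell+1)$ in this manifold and is not null-homologous there, so Proposition~3.5 does not even apply to the second piece.

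The paper's resolution is genuinely different from anything you propose. It exploits the one piece of bounded topology that \emph{is} available: the subsurface $F\subset\Sigma_g$ cobounded by $\alpha_\pm$, of genus at most $g$. Capping $F$ with the two cores gives a closed surface $\widehat{F}\subset W$ of square zero and genus $\leq g$; excising a neighborhood of $\widehat{F}$ leaves a 4-manifold $X$ with vanishing intersection form and boundary $-Y\#Y_\phi\#(\widehat{F}\times S^1)$. Since $\widehat{F}\times S^1$ does not have standard $HF^\infty$, the paper invokes the Behrens--Golla \emph{twisted} correction term $\underline{d}$ and its semidefinite inequality to obtain the uniform bound. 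This excision-plus-twisted-$d$ argument is the missing idea.
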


There is a similar statement for performing such ``Torelli surgeries'' in rational homology spheres, but we do not state it here for ease of reading.
In forthcoming work of Afton, Kuzbary, and Lidman, this relationship with the Torelli group will be explored further.

\subsection{Proof strategy for ~\Cref{thm:mainnullhom}}\label{sec:proofoutline}

The (3-manifold, framed link) pair $(\Sigma_q(P(U)), \cup_{i=1}^q \eta_i)$ contains all the information necessary to construct $\Sigma_q(P(K))$ for any knot $K$:
\begin{align*}
\Sigma_q(P(K))= \left(\Sigma_q(P(U)) \smallsetminus \bigcup_{i=1}^q \nu(\eta_i) \right) \cup \bigcup_{i=1}^q E_K=: (\Sigma_q(P(U)), \bigcup_{i=1}^q \eta_i) *E_K
\end{align*}
where $E_K$ denotes the exterior of $K$ and $\partial E_K$ is glued to $\partial\overline{\nu(\eta_i)}$ by identifying $\lambda_K$ with $\mu_{\eta_i}$ and $\mu_K$ with the longitude of $\eta_i$ obtained by lifting a 0-framed longitude of $\eta$.  (See ~\Cref{sec:topology} for more details.) 

Our key idea is to construct a cobordism from $(\Sigma_q(P(U)), \bigcup_{i=1}^q \eta_i)$ to $(S^3, H_q)$,  where $H_q$ is the split union of the $(-1)$-framed positive Hopf link with a $(q-2)$-component 0-framed unlink.  We use three types of modifications,  discussed in~\Cref{sec:topology}: null-homologous surgeries,  simple negative definite cobordisms, and clasper surgeries.  In~\Cref{sec:d-invariants},  we analyze the ways these three operations relate the $d$-invariants of $\Sigma_q(P(K))=(\Sigma_q(P(U)), \bigcup_{i=1}^q \eta_i) *E_ K$ to those of $\Sigma_2(C_{2,1}(K))= (S^3, H_q) *E_K$. We are able to conclude that there exists a constant $c_{P,q}$ depending only on $P$ and $q$,  such that for any knot $K$ one has \[ d_{\scriptscriptstyle\max}(\Sigma_q(P(K))) \leq d_{\max}(\Sigma_2(C_{2,1}(K)))+c_{P,q}.\]
We then use well-known formulas to observe that
\begin{align*}
\lim_{k \to \infty}  d_{\max}(\Sigma_2(C_{2,1}(T_{2,2k+1})))+ d_{\max}(\Sigma_2(C_{2,1}(-T_{2,2k+1})) =-\infty,
\end{align*} 
and use this to see that for sufficiently large $k$, 
\[d_{\max}(\Sigma_q(P(T_{2,2k+1})\#P(-T_{2,2k+1})))< 0.\]
We then conclude that $\Sigma_q(P(T_{2,2k+1})\#P(-T_{2,2k+1}))$ cannot bound a rational homology ball and hence that $P(T_{2,2k+1})\#P(-T_{2,2k+1})$ is not slice. 

\begin{remark}
This project was an offshoot of an attempt to approach the problem of concordance homomorphisms using Chern-Simons theory as applied previously by Hedden, Kirk, and Pinz\'on-Caicedo (see for example \cite{HeddenKirk, HeddenPinzon}).  The key idea would be to build a certain negative definite cobordism from $\Sigma_q(P(T_{r,s}) \# P(-T_{r,s}))$ to a 3-manifold
built out of surgeries on $T_{r,s}$.  Roughly, as the parameters of the torus knots grow, the minimal Chern-Simons invariants of this manifold would decrease, and once small enough, would preclude the branched cover from bounding a rational homology ball.  The input of Heegaard Floer homology allowed us to use this argument, with the unboundedness of $d(\Sigma_2(P(T_{2,2k+1}) \# P(-T_{2,2k+1})))$ playing the role of ``small minimal Chern-Simons invariant'', while simultaneously understanding the behavior of the invariant under other topological operations, such as Dehn surgery and clasper surgery (see~\Cref{sec:d-invariants} for concrete statements).  It would be interesting to prove some of the analogous statements in ~\Cref{sec:d-invariants} in the realm of Chern-Simons theory.
\end{remark}

\section*{Organization}
In \Cref{sec:topology} we describe certain cyclic branched covers of satellite knots in terms of (3-manifold, framed link) pairs and define and prove results about null-homologous surgery, twisting along an unknot, and clasper surgery.  In ~\Cref{sec:d-invariants} we constrain how each of these operations can change the $d$-invariants of the associated 3-manifolds and prove \Cref{thm:d-torelli}.  Finally,  in ~\Cref{sec:proof-main} we prove our main results,~\Cref{thm:mainnullhom} and~\Cref{thm:maincomp}.\\

\section*{Acknowledgements} 
We would like to thank Chuck Livingston for helpful comments on a previous draft. 

\section{Topological constructions}\label{sec:topology}
Since we base our argument in a specific topological relationship involving cyclic branched covers of satellite knots, we begin this section with a convenient decomposition of these covers, originally due to Seifert~\cite{seifert-covers}. Given a pattern $P$ in a parametrized solid torus $S^1 \times D^2$, we can take the standard embedding of $S^1 \times D^2$ into $S^3$ and consider the 2-component link given by $P \cup (* \times \partial D^2)$. 
This process is reversible: given an ordered link $P \cup \eta$ in $S^3$ such that $\eta$ is unknotted, we can canonically identify $S^3 \smallsetminus \nu(\eta)$ with a parametrized solid torus $S^1 \times D^2$ by requiring that a meridian of $\eta$ be identified with $S^1 \times \{*\}$ for some $* \in \partial D^2$.

One virtue of this link description of a pattern is that it leads to a useful decomposition of the exterior of a satellite knot: 
\begin{align}\label{eqn:extofpk} E_{P(K)}= \left(S^3 \smallsetminus \nu(\eta \cup P)\right) \cup E_K \end{align}
where  $\partial E_K$ is identified with $\partial\overline{\nu(\eta)}$ in such a way that $\lambda_K$, the 0-framed longitude of $K$, is identified with $\mu_{\eta}$ and similarly $\mu_K$ is identified with $\lambda_\eta$.

This induces a convenient decomposition of certain branched covers of $S^3$ along $P(K)$, as follows. 
Letting $q$ be a positive divisor of the winding number $w= \lk(P, \eta)$, the curve $\eta$ lifts to $q$ distinct curves $\eta_1, \dots, \eta_q$ in the $q$-fold cyclic branched cover $\Sigma_q(P(U))$ and the $q$-fold cover of $S^3$ branched over $P(K)$ decomposes as  
\begin{align}\label{eqn:cyclicofpk}
	\Sigma_q(P(K))=
	\left(\Sigma_q(P(U))\setminus \nu(\eta_1\cup \ldots \cup\eta_q)\right)
	\cup \bigcup_{i=1}^q E_K^{(i)}. 
\end{align}
Note that  $E_{K}^{(i)}$ is attached along $\partial\overline{\nu(\eta_i)}$ via a gluing map that identifies $\mu_K^{(i)}$ with the $i$-th lift of a 0-framing of $\eta$, and $\lambda_K^{(i)}$ with the $i$-th lift of a meridian of $\eta$.

Motivated by this topological description of covers branched along satellite knots, we introduce the following notation: given a framed $q$-component link $L$ in a 3-manifold $Y$ and a knot $K$ in $S^3$,  we define 
\begin{align}~\label{eqn:starnotation}
	(Y,L)*E_K=(Y \smallsetminus \nu(L)) \cup \bigcup_{i=1}^q E_K^{(i)}
\end{align}
where $E_K$ denotes the exterior of $K$ and $\partial E_K^{(i)}$ is identified with $\partial \nu(L_i)$ such that the Seifert longitude $\lambda_K$ is identified with $\mu_{L_i}$ and that $\mu_K$ is identified with the framing curve $\gamma_{L_i}$.\footnote{For example,  if $L_i$ is null-homologous and $f_i$ is the framing number of $L_i$, then $\gamma_{L_i}$ is the push-off of $L_i$ to $\partial N(L_i)$ such that $\lk (L_i,\gamma_{L_i})=f_i$.} Moreover, the decomposition from \Cref{eqn:cyclicofpk} can thus be denoted by $$\left(\Sigma_q(P(U)),\eta_1\cup \ldots \cup\eta_q \right)*E_K.$$  

We note for later reference that if $Y$ is an integer homology sphere and $L$ is any link in $L$,  the manifold $(Y,L)*E_K$ must also be an integer homology sphere,  since knot exteriors are homology solid tori. 

\begin{example}\label{exl:simplestar}
	Let $U$ be an $n$-framed unknot in $S^3$.  Then for any knot $K$ we have that \[ (S^3,U)*E_K= E_U \cup E_K,\]
	where $\lambda_K$ is identified with $\mu_U$ and $\mu_K$ is identified with a curve representing $n [\mu_{U}]+ [\lambda_U]$ in $H_1(\partial E_U)$.  Therefore the curve representing $[\mu_K]-n[\lambda_K]$ in $H_1(E_K)$ is identified with $\lambda_U$,  which bounds a meridional disc in $E_U=S^1 \times D^2$,  and we have that  $$(S^3,U)*E_K=S^3_{-1/n}(K).$$  
\end{example}

\begin{example}\label{exl:dbc-c21}
	Let $K$ be any knot.  The Akbulut-Kirby algorithm \cite{AkbulutKirby} applied to the M\"obius band bounded by $C_{2,1}(K)$ gives that $$\Sigma_2(C_{2,1}(K))= S^3_{+1}(K\#K^r).$$
	Alternately,  we obtain that $$\Sigma_2(C_{2,1}(K))= (S^3, H)* E_K,$$  where $H$ denotes the $(-1)$-framed positive Hopf link,  as follows. 
	
	Recall that the exterior of $C_{2,1}(K)$ is obtained from the exterior of $C_{2,1} \cup \eta$ by gluing in $E_K$ so that $\lambda_K$ is identified with $\mu_\eta$ and $\mu_K$ is identified with a curve representing  $\lambda_\eta$.  In~\Cref{fig::cable},  we see two diagrams for $C_{2,1} \cup \eta$: the usual one on the left and a diagram where $C_{2,1}$ is the standard unknot on the right.  
	\begin{figure}[h]
		\centering
		\def\svgwidth{0.2\textwidth}
\begingroup%
  \makeatletter%
  \providecommand\color[2][]{%
    \errmessage{(Inkscape) Color is used for the text in Inkscape, but the package 'color.sty' is not loaded}%
    \renewcommand\color[2][]{}%
  }%
  \providecommand\transparent[1]{%
    \errmessage{(Inkscape) Transparency is used (non-zero) for the text in Inkscape, but the package 'transparent.sty' is not loaded}%
    \renewcommand\transparent[1]{}%
  }%
  \providecommand\rotatebox[2]{#2}%
  \newcommand*\fsize{\dimexpr\f@size pt\relax}%
  \newcommand*\lineheight[1]{\fontsize{\fsize}{#1\fsize}\selectfont}%
  \ifx\svgwidth\undefined%
    \setlength{\unitlength}{118.59375bp}%
    \ifx\svgscale\undefined%
      \relax%
    \else%
      \setlength{\unitlength}{\unitlength * \real{\svgscale}}%
    \fi%
  \else%
    \setlength{\unitlength}{\svgwidth}%
  \fi%
  \global\let\svgwidth\undefined%
  \global\let\svgscale\undefined%
  \makeatother%
  \begin{picture}(1,0.83442827)%
    \lineheight{1}%
    \setlength\tabcolsep{0pt}%
    \put(0,0){\includegraphics[width=\unitlength,page=1]{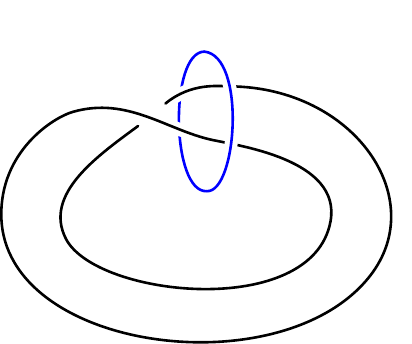}}%
    \put(0.46353388,0.74239196){\makebox(0,0)[lt]{\lineheight{1.25}\smash{\begin{tabular}[t]{l}$\eta$\end{tabular}}}}%
    \put(0.17379593,0.27639869){\makebox(0,0)[lt]{\lineheight{1.25}\smash{\begin{tabular}[t]{l}$C_{2,1}$\end{tabular}}}}%
  \end{picture}%
\endgroup%
\raisebox{1cm}{$\cong$}
		\def\svgwidth{0.2\textwidth}
\begingroup%
  \makeatletter%
  \providecommand\color[2][]{%
    \errmessage{(Inkscape) Color is used for the text in Inkscape, but the package 'color.sty' is not loaded}%
    \renewcommand\color[2][]{}%
  }%
  \providecommand\transparent[1]{%
    \errmessage{(Inkscape) Transparency is used (non-zero) for the text in Inkscape, but the package 'transparent.sty' is not loaded}%
    \renewcommand\transparent[1]{}%
  }%
  \providecommand\rotatebox[2]{#2}%
  \newcommand*\fsize{\dimexpr\f@size pt\relax}%
  \newcommand*\lineheight[1]{\fontsize{\fsize}{#1\fsize}\selectfont}%
  \ifx\svgwidth\undefined%
    \setlength{\unitlength}{118.59375bp}%
    \ifx\svgscale\undefined%
      \relax%
    \else%
      \setlength{\unitlength}{\unitlength * \real{\svgscale}}%
    \fi%
  \else%
    \setlength{\unitlength}{\svgwidth}%
  \fi%
  \global\let\svgwidth\undefined%
  \global\let\svgscale\undefined%
  \makeatother%
  \begin{picture}(1,0.83442827)%
    \lineheight{1}%
    \setlength\tabcolsep{0pt}%
    \put(0,0){\includegraphics[width=\unitlength,page=1]{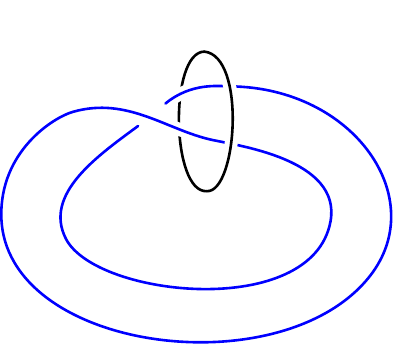}}%
    \put(0.17075099,0.28857846){\makebox(0,0)[lt]{\lineheight{1.25}\smash{\begin{tabular}[t]{l}$\eta$\end{tabular}}}}%
    \put(0.44901186,0.75656265){\makebox(0,0)[lt]{\lineheight{1.25}\smash{\begin{tabular}[t]{l}$C_{2,1}$\end{tabular}}}}%
  \end{picture}%
\endgroup%
\hspace{1cm}
		\caption{The $(2,1)$-cable pattern. }\label{fig::cable}
	\end{figure}
	In~\Cref{fig::cablecover},  we branch over the disc bounded by $C_{2,1}$ to obtain a schematic diagram for $\Sigma_2(C_{2,1}(K)$. 
	\begin{figure}[h]
		\centering
		\def\svgwidth{0.2\textwidth}
\begingroup%
  \makeatletter%
  \providecommand\color[2][]{%
    \errmessage{(Inkscape) Color is used for the text in Inkscape, but the package 'color.sty' is not loaded}%
    \renewcommand\color[2][]{}%
  }%
  \providecommand\transparent[1]{%
    \errmessage{(Inkscape) Transparency is used (non-zero) for the text in Inkscape, but the package 'transparent.sty' is not loaded}%
    \renewcommand\transparent[1]{}%
  }%
  \providecommand\rotatebox[2]{#2}%
  \newcommand*\fsize{\dimexpr\f@size pt\relax}%
  \newcommand*\lineheight[1]{\fontsize{\fsize}{#1\fsize}\selectfont}%
  \ifx\svgwidth\undefined%
    \setlength{\unitlength}{113.19717407bp}%
    \ifx\svgscale\undefined%
      \relax%
    \else%
      \setlength{\unitlength}{\unitlength * \real{\svgscale}}%
    \fi%
  \else%
    \setlength{\unitlength}{\svgwidth}%
  \fi%
  \global\let\svgwidth\undefined%
  \global\let\svgscale\undefined%
  \makeatother%
  \begin{picture}(1,0.66162352)%
    \lineheight{1}%
    \setlength\tabcolsep{0pt}%
    \put(0,0){\includegraphics[width=\unitlength,page=1]{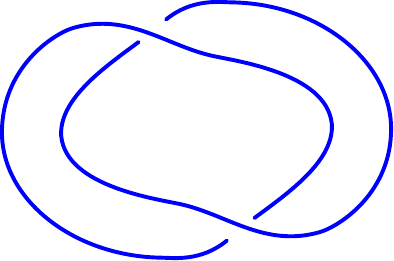}}%
    \put(0.8524657,0.60304676){\makebox(0,0)[lt]{\lineheight{1.25}\smash{\begin{tabular}[t]{l}$[-1]$\end{tabular}}}}%
    \put(0.22400347,-0.04038591){\makebox(0,0)[rt]{\lineheight{1.25}\smash{\begin{tabular}[t]{r}$[-1]$\end{tabular}}}}%
    \put(0.0962614,0.6257729){\makebox(0,0)[lt]{\lineheight{1.25}\smash{\begin{tabular}[t]{l}$\eta_1$\end{tabular}}}}%
    \put(0.87305911,0.02359134){\makebox(0,0)[lt]{\lineheight{1.25}\smash{\begin{tabular}[t]{l}$\eta_2$\end{tabular}}}}%
  \end{picture}%
\endgroup%

		\caption{ A diagram for the 2-fold branched cover of $C_{2,1}(K)$,  where the $-1$ labels indicate that the copies of $E_K$ are being glued in to identify $\mu_K$ with the $(-1,1)$ curve of each $\eta_i$. }\label{fig::cablecover}
	\end{figure} Note that the $-1$ labels on the two lifts of $\eta$ come from the fact that the curve $\eta$ on the right of~\Cref{fig::cable} has writhe equal to $+1$,  and are not surgery framings, but rather indicate that $\mu_K$ must be glued to the $(-1,1)$-curve of each $\eta_i$.  We therefore obtain as desired that $\Sigma_2(C_{2,1}(K))= (S^3, H)* E_K$.
\end{example}

We now remind the reader that for disjoint oriented curves $\gamma_1$ and $\gamma_2$ in a rational homology sphere $Y$,  there is an associated linking number $\lk_Y(\gamma_1, \gamma_2) \in \mathbb{Q}$~\cite[Chapter 10,  Section 77]{st} that depends on the isotopy classes of the two curves.\footnote{The reader may be more familiar with the $\Q/\Z$ torsion linking form: the image of $\lk_Y(\gamma_1, \gamma_2)$ in $\mathbb{Q}/ \mathbb{Z}$ is an invariant of the homology classes $[\gamma_1], [\gamma_2] \in H_1(Y; \mathbb{Z})$,  but with the additional information of the isotopy class the rational number is well-defined.} We remark that when one of $\gamma_1$ or $\gamma_2$ is null-homologous, as is guaranteed for example when $Y$ is an integer homology sphere, then $\lk_Y(\gamma_1, \gamma_2)$ is an integer.  When it is clear from context what the ambient 3-manifold is, we shall sometimes abbreviate $\lk_Y(\gamma_1, \gamma_2)$ as simply $\lk(\gamma_1,\gamma_2)$. 

Now,  returning to our consideration of $\eta_1\cup \ldots \cup\eta_q$ in $\Sigma_q(P(U))$,  we observe that orienting one of the $\eta_i$ determines an orientation for all the rest  by requiring that the link be covering transformation invariant as an oriented link. Therefore,   for $i \neq j$ we have that $\lk_{\Sigma_q(P(U))}(\eta_i,  \eta_j) \in \mathbb{Z}$ is well-defined. 

Until~\Cref{sec:nonnullhomologous}, we assume that each (or, equivalently, some) $\eta_i$ is null-homologous in $\Sigma_q(P(U))$. It follows that the longitudes of each $\eta_i$ canonically correspond to elements of $\mathbb{Z}$, by requiring that the Seifert longitude corresponds to $0$.  Define $\text{fr}(\eta_i)$ to be the integer corresponding to the $i$th lift of the 0-framed longitude of $\eta$.\\

\begin{lemma}~\label{lem:linkingframing}
	Given the notation above, 
	$ \text{fr}(\eta_i)= - \sum_{j \neq i} \lk(\eta_i, \eta_j).$ 
\end{lemma}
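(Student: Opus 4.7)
The plan is to package $\text{fr}(\eta_i)$ and the off-diagonal linking numbers $\lk(\eta_i,\eta_j)$ into a single total linking number in $\Sigma_q(P(U))$, and to show that this total vanishes because the $0$-framed longitude of $\eta$ bounds an obvious disk in $S^3$ that is disjoint from $\eta$.

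Concretely, let $\eta'\subset S^3$ be the $0$-framed longitude of $\eta$, realized as the boundary of a small push-off $D'$ of the meridional disk $D$ of the solid torus so that $D'\cap \eta=\emptyset$.  The disk $D'$ meets $P(U)$ transversely in $w$ points, so $\lk(\eta',P(U))=w$, and hence (since $q\mid w$) $\eta'$ lifts to $q$ disjoint curves $\eta'_1,\ldots,\eta'_q$ in $\Sigma_q(P(U))$, with each $\eta'_i$ a parallel push-off of $\eta_i$ realizing the framing $\text{fr}(\eta_i)$.  By the definition of $\text{fr}(\eta_i)$ we have $\lk(\eta_i,\eta'_i)=\text{fr}(\eta_i)$, and for $j\neq i$ the curve $\eta'_j$ is isotopic to $\eta_j$ in the complement of $\eta_i$, so $\lk(\eta_i,\eta'_j)=\lk(\eta_i,\eta_j)$.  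Summing over $j$ yields
\[
\sum_{j=1}^q \lk(\eta_i,\eta'_j)=\text{fr}(\eta_i)+\sum_{j\neq i}\lk(\eta_i,\eta_j).
\]

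The heart of the argument is to show that the left-hand side is zero.  Let $\pi\colon\Sigma_q(P(U))\to S^3$ denote the branched covering map.  Then $\pi^{-1}(D')$ is a (branched) $2$-chain in $\Sigma_q(P(U))$ with boundary $\pi^{-1}(\eta')=\bigcup_j\eta'_j$.  Since $D'\cap\eta=\emptyset$ in $S^3$, the preimage $\pi^{-1}(D')$ is disjoint from $\pi^{-1}(\eta)=\bigcup_j\eta_j$, and in particular from $\eta_i$.  Therefore the intersection number $\eta_i\cdot\pi^{-1}(D')$, which by definition computes $\lk\bigl(\eta_i,\bigcup_j\eta'_j\bigr)=\sum_j\lk(\eta_i,\eta'_j)$, is $0$, and the lemma follows.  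The only point requiring care is verifying that $\pi^{-1}(D')$ really is a well-defined $2$-chain with the claimed boundary, which is standard for cyclic branched covers of disks transverse to the branch set.
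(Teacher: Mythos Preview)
Your proof is correct and is essentially the same argument as the paper's: both use the lift of the meridional disk bounded by $\eta$ to $\Sigma_q(P(U))$ to exhibit the relevant homology. The only cosmetic difference is in packaging---the paper lifts $D$ itself to a surface $F$ with $\partial F=\bigcup_k\eta_k$, then removes the annular collar of $\eta_i$ to get $\hat F$ with $\partial\hat F=\lambda_i\cup\bigcup_{j\neq i}\eta_j$; you instead push $D$ off to $D'$ first (so that $\partial D'=\eta'$ and $D'\cap\eta=\emptyset$) and then lift, obtaining $\pi^{-1}(D')$ with boundary $\bigcup_j\eta'_j$ disjoint from $\eta_i$. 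Either way the conclusion $\text{fr}(\eta_i)+\sum_{j\neq i}\lk(\eta_i,\eta_j)=0$ drops out immediately.
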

\begin{proof}
	Denote the $i$-th lift of the 0-framed longitude of $\eta$ by $\lambda_i$.
	Recall that $\eta$ is an unknot in $S^3$,  and hence bounds a disc $D$ that we may assume is transverse to $P$.  In particular, the 0-framed longitude of $\eta$ is given by pushing $\eta$ into the interior of $D$. 
	The lift of $D$ to $\Sigma_q(P(U))$ is a (possibly disconnected) surface $F$ with boundary $\cup_{k=1}^q \eta_k$,  and $\lambda_i$ is given by pushing $\eta_i$ into the interior of $F$.  Modifying $F$ by removing the boundary annulus near $\eta_i$ gives a surface $\hat{F}$ with boundary $\partial \hat{F}=\lambda_i \cup \bigcup_{j \neq i} \eta_j$.  It follows that $\lambda_i$ is homologous to $-\cup_{ j \neq i} \eta_j$ in the complement of $\eta_i$,  and therefore that 
	\[
	\text{fr}(\eta_i)= \text{lk}(\eta_i,  \lambda_i)= \text{lk}(\eta_i, -\cup_{j \neq i} \eta_j)
	=-\sum_{j \neq i} \text{lk}(\eta_i, \eta_j),\]
	as desired. 
\end{proof}

We will need three ways to modify a (3-manifold,  framed link) pair $(Y,L)$: null-homologous surgery,  twisting along an unknot,  and clasper surgery.  (The reader should think of $Y$ as $\Sigma_q(P(U))$ and $L$ as the lift of $\eta$.)   
We now define these operations,  analyze how they change the linking-framing information of $L$,  and prove some key results for later use.  
We remark that in each case an operation from $(Y,L)$ to $(Y', L')$ will induce a parallel operation from $(Y, L)*E_K$ to $(Y', L')*E_K$. 

\subsection{Null-homologous surgery}

 Let $L$ be a link in $Y$ and $\gamma$ a curve that is null-homologous in $Y \smallsetminus \nu(L)$.  Observe that  surgery along $\gamma$ transforms $(Y, L)$ to a new $(Y', L')$,  with the key property that $L'$
has the same  linking numbers and framings as $L$.  We will call this operation a {\em null-homologous surgery} on a 3-manifold, link pair.

\begin{proposition}\label{prop:qhs3-to-s3}
Let $Y$ be a rational homology sphere and $L$ a framed null-homologous link in $Y$.  
There exists a framed link in $L'$ in $S^3$ with the same linking-framing information as $L$, a natural number $k$,  and a sequence of natural numbers $m_1, \dots, m_k$ with the property that
 there exists a finite sequence of null-homologous surgeries from $(S^3,L')$ to $(Y \#_{i=1}^k L(m_i,1), L)$. 
\end{proposition}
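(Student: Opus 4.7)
The plan is to reverse-engineer the desired sequence of null-homologous surgeries by starting from a surgery description of $Y$. First, by Lickorish--Wallace I will write $Y = S^3_J$ for some framed link $J = J_1 \cup \cdots \cup J_N \subset S^3$, and realize $L$ as a link $L_0 \subset S^3 \smallsetminus \nu(J)$. Let $Q$ denote the symmetric integer matrix encoding the linking numbers and framings of $J$, and let $b_i = (\lk_{S^3}(L_{0,i}, J_k))_k$. The hypothesis that each $L_i$ is null-homologous in $Y$ translates, via the presentation $H_1(Y) = \mathbb{Z}^N / Q\,\mathbb{Z}^N$, into the existence of integer vectors $c_i$ satisfying $Q c_i = b_i$.

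Next I will perform handle slides of each $L_{0,i}$ over the components of $J$, specifically band-summing $L_{0,i}$ with $-c_{ik}$ framing-pushoffs of $J_k$ for each $k$. Call the resulting framed link $L' \subset S^3$. The standard formulas for how linking numbers and framings change under handle slides, combined with $Q c_i = b_i$ and the symmetry of $Q$, directly yield $\lk_{S^3}(L'_i, J_k) = 0$ for all $i, k$, as well as $\lk_{S^3}(L'_i, L'_j) = \lk_Y(L_i, L_j)$ for $i \neq j$, and equality of the framings of $L'$ in $S^3$ with those of $L$ in $Y$ (both computed from the familiar formula $\lk_{S^3}(L_{0,i}, L_{0,j}) - b_i^T Q^{-1} b_j$). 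Thus $L'$ carries the required linking-framing information, and every component of $J$ is null-homologous in $S^3 \smallsetminus \nu(L')$.

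Surgery on the multi-component link $J$ realizes $(Y, L)$ from $(S^3, L')$, and it remains to decompose this into a finite sequence of single-component null-homologous surgeries. The obstruction is that after surgering some of the $J_k$, the remaining components may represent nontrivial classes in the first homology of the intermediate manifolds relative to $L'$, due to off-diagonal entries of $Q$. To handle this I will further modify $J$ via Kirby moves supported in the complement of $L'$ (so as to preserve all linking-framing data of $L'$): handle slides among components of $J$, together with stabilizations by framed unknots disjoint from $L'$. Invoking the classification of symmetric integer bilinear forms up to stable congruence, after stabilizing by suitable $m_i$-framed unknots the linking matrix of $J$ becomes diagonalizable via handle slides. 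The stabilizations contribute $L(m_i, 1)$ connect summands, producing a new framed link $\tilde J \subset S^3 \smallsetminus \nu(L')$ with diagonal linking matrix, still satisfying $\lk_{S^3}(\tilde J_k, L') = 0$, and with $S^3_{\tilde J} = Y \#_{i=1}^k L(m_i, 1)$.

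Finally, because $\tilde J$ has diagonal linking matrix and zero linking with $L'$, a short homology computation shows that each $\tilde J_k$ remains null-homologous in the complement of $L'$ in the manifold obtained by surgery on any subset of $\tilde J \smallsetminus \{\tilde J_k\}$. Performing the surgeries on $\tilde J_1, \ldots, \tilde J_N$ one at a time then produces the desired finite sequence of null-homologous surgeries from $(S^3, L')$ to $(Y \#_{i=1}^k L(m_i, 1), L)$. The main obstacle will be the diagonalization step, which rests on the classical but nontrivial fact that any symmetric integer form becomes diagonalizable by handle slides after appropriate $\langle m_i \rangle$-stabilizations; the remainder is bookkeeping with standard Kirby calculus formulas for slides and blow-downs.
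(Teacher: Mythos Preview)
Your proof is correct and follows essentially the same strategy as the paper's: obtain a surgery presentation of $Y$ (after stabilizing by lens spaces) whose linking matrix is diagonal, and handle-slide $L$ over the surgery curves using the null-homologous hypothesis to algebraically unlink them. The only difference is the order of these two steps---the paper first invokes \cite[Lemma~2.4]{Lidman} to get a pairwise-linking-zero presentation and then slides $L$, whereas you slide $L$ first and then diagonalize $J$ via Kirby moves supported away from $L'$---but the underlying content, including the key nontrivial input on stable diagonalization, is identical.
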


By the phrase ``a sequence of null-homologous surgeries'', we mean that there exists a link $J = J_1 \cup \ldots \cup J_r$ such that $J$ is nullhomologous in $Y \smallsetminus \nu(L)$, and for each $i$, $J_{i+1}$ is nullhomologous in the complement of the image of $L$ after surgery on $J_1, \ldots, J_i$.  We now consider the following equivalent formulation.  
\begin{lemma}~\label{lem:qhs3toanyzhs3}
Let $L = L_1 \cup \ldots \cup L_n$ be a null-homologous framed link in a rational homology sphere $Y$.  Then there exist two disjoint framed links $L' = L'_1 \cup \ldots \cup L'_n$, $J = J_1 \cup \ldots \cup J_k$ in $S^3$ with $\lk_{Y}(L_i,L_j) = \lk_{S^3}(L'_i, L'_j)$ for all $i \neq j$ which satisfies the following property.  
\begin{enumerate}
\item $\lk_{S^3} (J_i, J_j) = \lk_{S^3}(J_i,L'_j) = 0$ for all $i, j$ (i.e. $J$ has pairwise linking zero and is null-homologous in the exterior of $L'$);
\item framed surgery on $J$ produces the connected sum of $Y$ and lens spaces of the form $L(m,1)$;
\item the image of $L'$ under surgery on $J$ is contained in the $Y$-summand and agrees with $L$ as a framed link. 
\end{enumerate}
\end{lemma}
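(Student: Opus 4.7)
The plan is to start with an arbitrary surgery description of $Y$ and modify it via Kirby calculus until it takes the form required by the lemma.  Choose a framed link $J_0 \subset S^3$ with $S^3_{J_0} = Y$, and after a small isotopy view $L \subset Y$ as a framed link $L_0 \subset S^3$ disjoint from $J_0$ with the property that $J_0$-surgery on $(S^3, L_0 \sqcup J_0)$ returns $(Y, L)$ as a framed link pair.

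The first Kirby modification eliminates linking between $L_0$ and $J_0$.  Since $L$ is null-homologous in $Y$, each class $[L_{0,i}] \in H_1(S^3 \smallsetminus \nu(J_0)) \cong \bigoplus_k \mathbb{Z}\langle \mu_{J_{0,k}}\rangle$ lies in the kernel of the surjection to $H_1(Y)$, which is generated by the framed longitudes $\lambda_{J_{0,k}} + f_k \mu_{J_{0,k}}$.  Hence there exist integers $a_{i,k}$ such that performing handle slides of $L_{0,i}$ over the $J_{0,k}$ with multiplicities $-a_{i,k}$---each realized as an isotopy of $L$ in $Y$ across a meridional disk of the $k$th surgery solid torus---produces a framed link $L' \subset S^3$ with $\lk_{S^3}(L'_i, J_{0,k}) = 0$ for all $i, k$.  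Because $L'_i$ now admits a Seifert surface in $S^3 \smallsetminus \nu(J_0)$, which simultaneously serves as a Seifert surface for $L_i$ in $Y$, the pairwise linking numbers and framings of $L'$ in $S^3$ coincide with those of $L$ in $Y$.

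The second modification makes $J_0$ algebraically split.  Handle slides among the components of $J_0$ preserve $Y$ and, because $L'$ is algebraically split from $J_0$, leave $L'$'s linking and framing data completely unchanged.  We are also free to introduce a split $m$-framed unknot into $J_0$, which connect-sums a copy of $L(m,1)$ onto the resulting 3-manifold while again leaving $L'$ unaffected.  Handle slides realize symmetric $\mathrm{GL}(\mathbb{Z})$-congruence on the linking matrix of $J_0$, and introducing a split $m$-framed unknot corresponds to orthogonal direct sum with the $1 \times 1$ block $(m)$.  Invoking the classical fact that every symmetric integer matrix admits a diagonalization after direct sum with a suitable diagonal integer matrix, after finitely many such moves the linking matrix of the resulting link $J$ becomes diagonal.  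The data $(S^3, L' \sqcup J)$ then satisfies (1) $J$ has pairwise linking zero and is algebraically split from $L'$; (2) framed surgery on $J$ produces $Y \# \#_{i=1}^{r} L(m_i, 1)$, where the $m_i$ are the framings of the split unknots introduced during diagonalization; and (3) $L'$ maps to $L$ in the $Y$ summand.

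The main obstacle is the stable diagonalization invoked in the final step: I must verify that the two allowed operations---handle slides among the surgery components and the introduction of split framed unknots---suffice to diagonalize the linking matrix of $J_0$.  This is the content of a standard fact about integer symmetric bilinear forms (any such form becomes diagonal after direct sum with a diagonal form), and can be proved by induction, e.g., by repeatedly extracting unimodular rank-one summands after introducing suitable $\pm 1$-framed split unknots.  The remaining work---translating each matrix operation into the corresponding Kirby move while tracking exactly which $m_i$ arise---is routine but must be carried out with care.
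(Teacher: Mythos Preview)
Your proof is correct and follows essentially the same strategy as the paper's: both use handle slides of $L$ over the surgery link (using null-homologousness to place the linking vector in the integral column space of the linking matrix) together with the stable diagonalization of integer symmetric forms, which the paper cites as an external lemma. The only difference is the order of these two steps---the paper first diagonalizes $J$ and then slides $L$, whereas you slide $L$ first and then diagonalize, correctly observing that handle slides among the $J$-components preserve the zero $L'$--$J$ linking.
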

\Cref{lem:qhs3toanyzhs3} immediately establishes \Cref{prop:qhs3-to-s3}: since $(Y,L)$ is obtained from $(S^3, L')$ by surgery on a link with pairwise linking zero and trivial linking with $L'$, we see that after applying surgery on each component of $J$ the sublink of the remaining components of $J$ will still be null-homologous. 

\begin{proof}[Proof of \Cref{lem:qhs3toanyzhs3}]
After connected sum with lens spaces of the form $L(m,1)$, $Y$ can be expressed as surgery on a framed link in $S^3$ with pairwise linking zero \cite[Lemma 2.4]{Lidman}.  This link will be $J$.  Viewing $Y \#_i L(m_i,1)$ as surgery on $J$, we can view the link $L$ as a link $L''$ in $S^3$ which is disjoint from $J$.  Note that $L''$ may have linking with $J$.  

We first claim that we may handle-slide the components of $L''$ over $J$ so that they do not link $J$ algebraically.  To see this, consider the linking matrix $A_J$ associated to the framed link $J$.  Now, let $\vec{x}^{(1)}$ be the integral vector with $x^{(1)}_i = \lk_{S^3}(L''_1, J_i)$.  Note that if we handle-slide $L''_1$ over $J$ (where the handle-slide happens away from the other components of $L''$), we do not change the isotopy type of $L''$ as a link in $Y \#_i L(m_i,1)$, but we do change the corresponding vector $\vec{x}^{(1)}$.  In particular, when handle-sliding over the $i$th component of $J$, we add the $i$th column of $A_J$ to $\vec{x}^{(1)}$.  Because $L$ is null-homologous in $Y$, and hence $Y \#_i L(m_i,1)$, this means that the vector $\vec{x}^{(1)}$ is contained in the integral column space of $A_J$.  Therefore, there exists a sequence of handle-slides of $L''_1$ over $J$ to produce a new link with $\lk_{S^3}(L''_1,J_i) = 0$ for all $i$.  We now repeat the same argument with $L''_2$.  Since $\lk_{S^3}(L''_1, J_i) = 0$ for all $i$, handle-sliding $L''_2$ over $J$ does not change the linking of $L''_1$ with $J$.  Continue this process for all components of $L''$.  The result after the sequence of handle-slides is the claimed link $L'$.  

It remains to show that $\lk_{Y}(L_i, L_j) = \lk_{S^3}(L'_i, L'_j)$ for all $i \neq j$.  Since $L'$ has no linking with $J$, we can find a Seifert surface $S_i$ for each component of $L'_i$ which is disjoint from $J$; we know the algebraic intersection number of each component of $J$ with $S_i$ is zero, and we can tube the $S_i$ to make sure that the geometric intersection is zero as well.  (Note we will have some $S_i$'s intersecting $L'_j$'s with $i \neq j$.)  Since $S_i$ is disjoint from $J$, we see that $\lk_{S^3}(L'_i, L'_j)$ is unchanged by surgery on $J$.  Consequently, $\lk_Y(L_i, L_j) = \lk_{S^3}(L'_i, L'_j)$. \end{proof}

\begin{remark}
The addition of lens spaces is not necessary if $Y$ is a homology sphere.  An example of a rational homology sphere that cannot be obtained by integral surgery on a link in $S^3$ with pairwise linking zero is the 3-manifold obtained by 0-surgery on the 2-component link $T_{2,4}$. Nevertheless, the connected sum of this rational homology sphere with $-L(2,1)=S^3_2(U)$ can be represented as surgery along a framed link with diagonal linking matrix with diagonal entries $(-2,2,2)$.\\\end{remark}

\subsection{A simple negative-definite cobordism from twisting along an unknot}~\label{sec:twistingonanunknot}
As above, let $L = L_1 \cup \ldots \cup L_n$ be a link in a 3-manifold $Y$.  Choose $i \neq j$.  Let $\alpha$ be an unknot  in $Y$ bounding a disc $D$ such that the geometric intersection of $D$ with $L_k$ is a single positively oriented point if $k=i$,  a single negatively oriented point if $k=j$,  and the empty set otherwise, as illustrated in \Cref{fig:twisting}. Then doing $-1$ framed surgery along $\alpha$  does not change $Y$,  but transforms $L$ to a link $L'$ with
\begin{equation}\label{eq:neg-def-linking}
 \lk(L_k', L_\ell')= \begin{cases}
\lk(L_k ,L_\ell) -1 & \text{if }   \{k, \ell\} = \{i, j \}
    \\  \lk(L_k, L_\ell) & \text{else}
 \end{cases}
\end{equation}
and
\begin{equation}\label{eq:neg-def-framing}
\text{fr}(L_k')= \begin{cases}
\text{fr}(L_k)+ 1 & \text{if }   k\in  \{i, j \}
    \\  \text{fr}(L_k) & \text{else.}
 \end{cases}
\end{equation}

\begin{figure}[h]
	\centering
	\def\svgwidth{0.25\textwidth}
\begingroup%
  \makeatletter%
  \providecommand\color[2][]{%
    \errmessage{(Inkscape) Color is used for the text in Inkscape, but the package 'color.sty' is not loaded}%
    \renewcommand\color[2][]{}%
  }%
  \providecommand\transparent[1]{%
    \errmessage{(Inkscape) Transparency is used (non-zero) for the text in Inkscape, but the package 'transparent.sty' is not loaded}%
    \renewcommand\transparent[1]{}%
  }%
  \providecommand\rotatebox[2]{#2}%
  \newcommand*\fsize{\dimexpr\f@size pt\relax}%
  \newcommand*\lineheight[1]{\fontsize{\fsize}{#1\fsize}\selectfont}%
  \ifx\svgwidth\undefined%
    \setlength{\unitlength}{159.42311085bp}%
    \ifx\svgscale\undefined%
      \relax%
    \else%
      \setlength{\unitlength}{\unitlength * \real{\svgscale}}%
    \fi%
  \else%
    \setlength{\unitlength}{\svgwidth}%
  \fi%
  \global\let\svgwidth\undefined%
  \global\let\svgscale\undefined%
  \makeatother%
  \begin{picture}(1,0.60192913)%
    \lineheight{1}%
    \setlength\tabcolsep{0pt}%
    \put(0,0){\includegraphics[width=\unitlength,page=1]{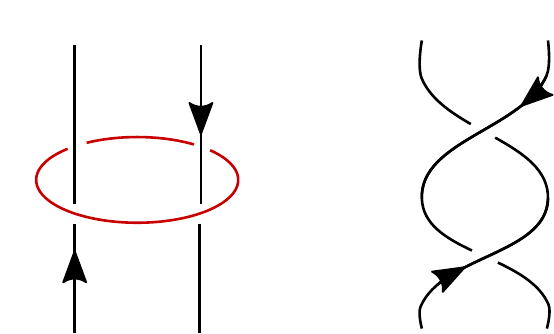}}%
    \put(-0.00474734,0.2508786){\color[rgb]{0,0,0}\makebox(0,0)[lt]{\lineheight{1.25}\smash{\begin{tabular}[t]{l}$\alpha$\end{tabular}}}}%
    \put(0.08451747,0.55365935){\color[rgb]{0,0,0}\makebox(0,0)[lt]{\lineheight{1.25}\smash{\begin{tabular}[t]{l}$L_i$\end{tabular}}}}%
    \put(0.29959582,0.55365935){\color[rgb]{0,0,0}\makebox(0,0)[lt]{\lineheight{1.25}\smash{\begin{tabular}[t]{l}$L_j$\end{tabular}}}}%
  \end{picture}%
\endgroup%

	\caption{Positive twisting along an unknot $\alpha$ changes $\lk(L_i, L_j)$ by $-1$ and $\text{fr}(L_i)$ and $\text{fr}(L_j)$ by $+1$.}
	~\label{fig:twisting}
\end{figure}

We call the 4-manifold 
\[ W= Y \times I \cup_{\alpha \times \{1\}} \text{2-handle}\] an \textit{elementary negative-definite cobordism} from $(Y,L)$ to $(Y,  L')$. Note that if $W$ is an elementary negative definite cobordism from $(Y,L)$ to $(Y', L')$ then for any knot in $S^3$ we can glue a collection of  copies of $E_K \times I$ to obtain a negative definite cobordism from $(Y,L)*E_K$ to $(Y,L')*E_K$.

\begin{proposition}\label{prop:reduce-linking}
Let $L$ be a framed $q$-component link in $S^3$ such that
\begin{enumerate}
\item $\lk(L_i, L_j)\geq 0$ for all $i \neq j$, 
\item   $\lk(L_{i_0}, L_{j_0}) \geq 1$ for some $i_0 \neq j_0$,  and
\item $fr(L_i)=- \sum_{j \neq i} \lk(L_i, L_j)$ for all $i$. 
\end{enumerate}
Then there exists a sequence of elementary negative definite cobordisms from $(S^3, L) $ to $(S^3, L')$, where  $L'$ is a framed link in $S^3$ satisfying
\[\lk(L'_i,  L'_j)= \begin{cases} 1 & \text{if } \{i, j \}=\{i_0, j_0\} \\ 0 & \text{else}. \end{cases} \quad\text{ and }\quad fr(L_i')= \begin{cases} -1 & \text{if } i \in \{i_0, j_0\} \\ \phantom{-}0 & \text{ else}. \end{cases}\] 
In particular,  for any knot $K$ there is a negative definite cobordism from $(S^3, L)*E_K$ to $(S^3, L')*E_K$. 
\end{proposition}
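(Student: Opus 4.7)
The plan is to construct $L'$ from $L$ by a carefully chosen sequence of the elementary twisting operations introduced in \Cref{sec:twistingonanunknot}. Recall that each such operation is $(-1)$-surgery on an unknot $\alpha$ whose bounding disk meets a pair of components $L_i, L_j$ once each with opposite signs; by \Cref{eq:neg-def-linking} and \Cref{eq:neg-def-framing}, this decreases $\lk(L_i, L_j)$ by $1$, increases both $\text{fr}(L_i)$ and $\text{fr}(L_j)$ by $1$, and leaves all other linking numbers and framings unchanged. The associated $2$-handle attachment on a $(-1)$-framed unknot supplies an elementary negative definite cobordism, and since each such cobordism has both ends homeomorphic to $S^3$, their composition is again negative definite (the intersection forms decompose as a direct sum over the rational homology sphere splitting).

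The actual construction will proceed as follows: apply the twisting operation $\lk(L_i, L_j)$ times for each pair $\{i,j\} \neq \{i_0, j_0\}$ (processed in some fixed order), and $\lk(L_{i_0}, L_{j_0}) - 1$ times for the distinguished pair $\{i_0, j_0\}$. At each step the required unknot $\alpha$ can be produced as follows: because the pair under consideration still has strictly positive linking, one can isotope the current link so that some pair of anti-parallel strands of $L_i$ and $L_j$ sit adjacent, and then encircle them by a small meridional unknot whose bounding disk meets $L_i, L_j$ once each with opposite signs and is disjoint from every other component. After the entire sequence, the resulting link $L'$ will by construction have exactly the linking numbers prescribed in the statement.

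The step I expect to require the most care is the verification of the framings, since the construction above was designed to produce the right linking numbers but only indirectly controls the framings. The key observation is that the null-homologous framing identity $\text{fr}(L_i) = -\sum_{k \neq i} \lk(L_i, L_k)$, which holds for the initial $L$ by hypothesis~(3), is preserved under each elementary twisting: a twist in pair $\{i,j\}$ decreases $\sum_{k \neq i} \lk(L_i, L_k)$ by $1$ and increases $\text{fr}(L_i)$ by $1$, and symmetrically for $j$. Consequently the identity will continue to hold for $L'$, and evaluating $\sum_{k \neq i} \lk(L'_i, L'_k)$ using the prescribed linking values will yield $\text{fr}(L'_i) = -1$ when $i \in \{i_0, j_0\}$ and $\text{fr}(L'_i) = 0$ otherwise. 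Finally, each elementary cobordism extends across the boundary knot exteriors by gluing in $E_K \times [0,1]$, which contributes no new $H_2$, so composing these extensions yields the desired negative definite cobordism from $(S^3, L) * E_K$ to $(S^3, L') * E_K$.
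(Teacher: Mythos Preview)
Your proof is correct and follows essentially the same approach as the paper: perform $\lk(L_i,L_j)$ elementary twists on each pair $\{i,j\}\neq\{i_0,j_0\}$ and $\lk(L_{i_0},L_{j_0})-1$ twists on the distinguished pair, then read off the linking and framing from \eqref{eq:neg-def-linking}--\eqref{eq:neg-def-framing}. The only stylistic difference is that the paper constructs all the twist unknots simultaneously (as boundaries of small disks near arcs joining $L_i$ to $L_j$, so that together they form an unlink) rather than sequentially, and in particular does not invoke positivity of the linking to produce $\alpha$---such an unknot exists for any pair $i\neq j$ regardless of $\lk(L_i,L_j)$, so your appeal to positive linking there is unnecessary though harmless.
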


\begin{proof}[Proof of \Cref{prop:reduce-linking}]
By renumbering the components of $L$, we may assume for simplicity in notation that $i_0=1$ and $j_0=2$. 
We construct a $(\sum_{i \neq j} \lk(L_i, L_j)-1)$-component link $J$ in $S^3 \smallsetminus L$ as follows. 
For each $i <j$,  let $\alpha_{i,j}$ be an arc in $S^3$ connecting $L_i$ to $L_j$ and otherwise disjoint from $L$.  By transversality, we can assume that all such arcs are disjoint.  We can choose a disc $D_{i,j}$ living arbitrarily close to $\alpha_{i,j}$,  with a boundary that is homotopic to $\alpha_{i,j} \mu_{L_i} \bar{\alpha}_{i,j} \bar{\mu}_{L_j}$ in $S^3 \smallsetminus L$, as illustrated on the left and center of \Cref{fig:buildingunlink}. In particular, 
\[\lk(\partial D_{i,j}, L_k)= \begin{cases} \phantom{-}1 & k=i \\ -1 & k=j \\ \phantom{-}0 & \text{ else} \end{cases}.\]

\begin{figure}[h!]
\includegraphics[height=3cm]{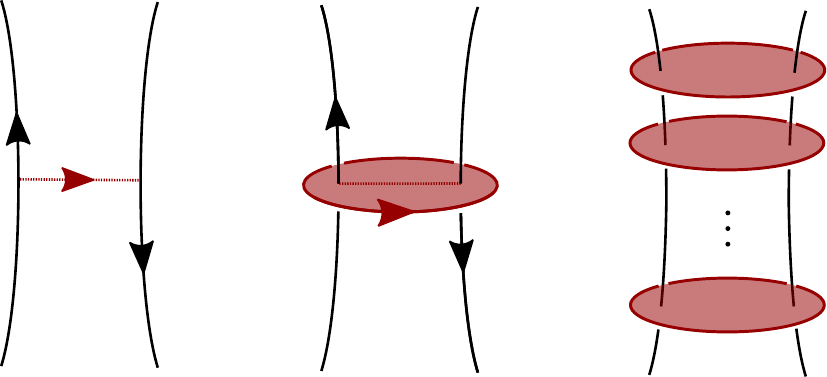}
\caption{An arc $\alpha_{i,j}$ connecting $L_i$ to $L_j$ (left);  a disc $D_{i,j}$ lying very close to $\alpha_{i,j}$ (center); and $\lk(L_i,L_j)$ parallel push-offs of $D_{i,j}$ (right).  }\label{fig:buildingunlink}
\end{figure} 

For $i<j$ such that $(i,j) \neq (1,2)$ and $\lk(L_i, L_j) \neq 0$,  let $\mathcal{D}_{i,j}$ be the disjoint union of $\lk(L_i, L_j)$ parallel push-offs of $D_{i,j}$,  as illustrated on the right of \Cref{fig:buildingunlink}. 
Let $\mathcal{D}_{1,2}$ be the disjoint union of $\lk(L_1, L_2)-1$ parallel push-offs of $D_{1,2}$ (or the empty set, if $\lk(L_1,L_2)=1$. ) 
Let $\mathcal{D}$ be the union of all of these discs,  and denote by $\mathcal{L}$ its boundary.  By construction,  $\mathcal{L}$ is an unlink in $S^3$ that is disjoint from $L$. 
By attaching $(-1)$-framed 2-handles to $S^3 \times [0,1]$ along $\mathcal{L} \subset S^3 \times \{1\}$,  we obtain a negative definite cobordism from $(S^3, L)$ to $(S^3, L')$ for some link $L'$.  It remains to show that the linking-framing information of $L'$ is as desired.  But this follows immediately from \eqref{eq:neg-def-linking},  given our choice of the number of components of $\mathcal{L}_{i,j}= \partial D_{i,j}$.
\end{proof}

\subsection{Clasper surgeries}
Although null-homologous surgeries do not change linking information, it is not true that any two links with the same pairwise linking can be transformed into each other by a sequence of null-homologous surgeries.  A simple example of this is the 3-component unlink and the Borromean rings. Nevertheless, the latter can be transformed into the former via a well-studied generalization of Dehn surgery called clasper or Borromean surgery, an operation commonly studied in quantum topology. This operation will allow us to transform the lift of $\eta$ into a simpler and familiar link, and this will in turn allow us to prove our main theorem.

\begin{definition}
A {\em clasper surgery on a 3-manifold} is the result of removing from a 3-manifold the genus three handlebody on the left of \Cref{fig:clasper} and replacing it with the handlebody on the right of the same figure,  or the reverse. 
 A {\em clasper surgery on a link} in a 3-manifold is defined analogously where the genus three handlebody must be disjoint from the link.     
\end{definition}

Clasper surgery can be equivalently defined as removing the handlebody $H$, and regluing the same $H$ with gluing map a suitable element $\psi$ of the mapping class group of the genus three surface $\partial H$. Since the blue curves on the left and right of \Cref{fig:clasper} are pairwise homologous, the map $\psi$ is an element of the Torelli group of the genus $3$ surface $S_3$. As a consequence, clasper surgery on a 3-manifold does not change the homology.

We remark that if one does clasper surgery to the unknotted (genus three) handlebody in $S^3$,  the result is still $S^3$, since the pairwise geometric linking of all curves remains the same as seen in \Cref{fig:clasper}. Similarly, modifying the standard genus three splitting for $\#_3 S^2 \times S^1$ by clasper surgery yields $\mathbb{T}^3$. Since we do not need this fact, we leave the verification as an exercise for the reader. 
\begin{figure}[h!]
\includegraphics[width=0.25\textwidth]{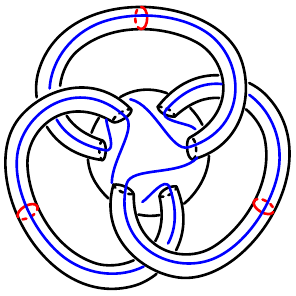}\hspace{0.75in}
\includegraphics[width=0.25\textwidth]{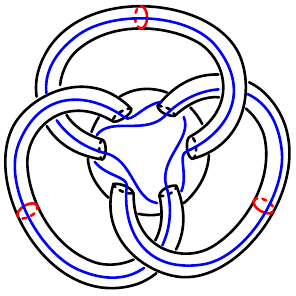}
\caption{Pictured are two Heegaard diagrams representing a clasper surgery from $S^3$ to $S^3$.  If we ignore the red curves, the handlebodies with compressing disks determined by the blue curves are exactly the genus $3$ handlebodies involved in the definition of clasper surgery. 
}
\label{fig:clasper}
\end{figure}

In the case of links, the result of clasper surgery is to tie the strands of the link ``passing through'' the one-handles of the handlebody into a Borromean link as illustrated in \Cref{fig:clasperlink}. 

\begin{figure}[h!]
\includegraphics[height=4cm]{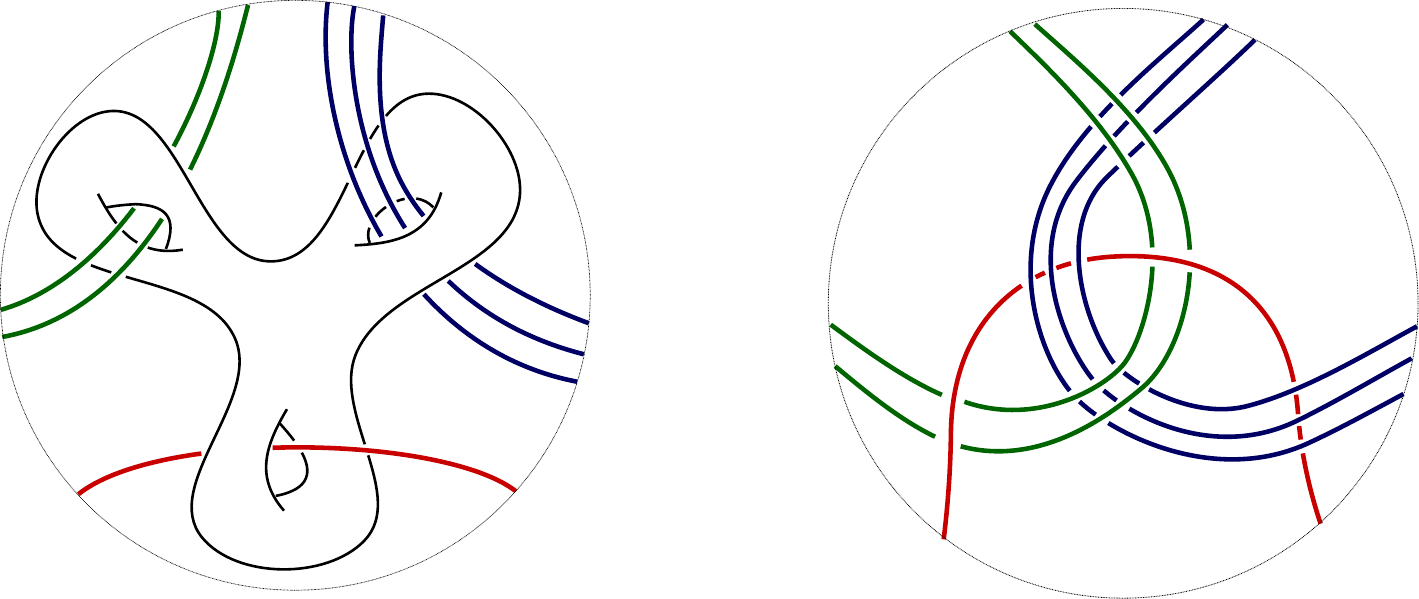}
\caption{Clasper surgery locally transforms between the links on the left and right.  Note that the depicted arcs may belong to any components of the link. }
\label{fig:clasperlink}
\end{figure}

Similar to the manifold case,  clasper surgery on a link does not change pairwise linking numbers or framings.  (However, it can change the higher Milnor linking invariants!) What is more interesting is that the converse is also true:

\begin{theorem}[Proof of Lemma 2 of \cite{Matveev}, see also \cite{MurakamiNakanishi}]\label{thm:claspers}
Let $L = L_1 \cup \ldots \cup L_n$ and $L' = L'_1 \cup \ldots \cup L'_n$ be two framed links in $S^3$ with the same number of components.  If the linking matrices for $L$ and $L'$ agree, then $L$ and $L'$ are related by a sequence of clasper surgeries.  
\end{theorem}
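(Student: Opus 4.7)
The plan is to reduce to a well-known classification of links up to the local move produced by clasper surgery.

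First, I would analyze the effect of a single clasper surgery on a framed link. When the genus-three handlebody is disjoint from the link, the surgery changes the ambient 3-manifold but leaves the link unchanged; when three arcs of the link run through the one-handles (see \Cref{fig:clasperlink}), the surgery replaces three trivial parallel arcs by three arcs tied in a Borromean pattern. A direct Seifert-surface computation shows that this local replacement preserves every pairwise linking number $\lk(L_i,L_j)$ as well as every framing, because the Borromean three-strand tangle is built from three curves that bound pairwise disjoint Seifert surfaces in its defining ball. In particular, clasper surgery on framed links in $S^3$ is a framing- and linking-matrix preserving operation.

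Second, I would invoke the classification of links by Murakami-Nakanishi: the local move of step one, in the literature called a $\Delta$-move, generates the equivalence relation ``having the same matrix of pairwise linking numbers.'' Given this, since the off-diagonal entries of the linking matrices of $L$ and $L'$ agree by hypothesis, we obtain a finite sequence of clasper surgeries producing an ambient isotopy from the underlying unframed link of $L$ to that of $L'$. Because clasper surgery preserves all self-linking numbers and the diagonal entries of the linking matrices also agree by hypothesis, the resulting sequence of clasper surgeries takes $L$ to $L'$ as framed links.

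The main obstacle is the combinatorial heart of the Murakami-Nakanishi theorem, which has two parts: (i) showing that self-$\Delta$-moves on a single component can unknot any knot — equivalently, that the $\Delta$-move is an unknotting operation — and (ii) showing that $\Delta$-moves involving two or three distinct components suffice to cancel any ``extra'' entanglement beyond the prescribed pairwise linking numbers. Step (i) requires a genuine geometric argument (one finds an appropriate Seifert surface and manipulates bands), while step (ii) is then a component-by-component cleanup that fixes prescribed Hopf-type linking between each pair of components. Both are standard; the only piece of work entirely on our side is the verification in step one that the clasper surgery pictured in \Cref{fig:clasper} implements a $\Delta$-move on the link strands passing through it, which is a routine comparison of local Heegaard pictures.
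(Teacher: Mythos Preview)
The paper does not give its own proof of this statement: it is quoted as a result from the literature, with the attribution ``Proof of Lemma 2 of \cite{Matveev}, see also \cite{MurakamiNakanishi}'' in the theorem heading itself. Your outline is the standard one and matches what those references do: the local move on link strands implemented by clasper surgery (tying three parallel arcs into a Borromean tangle, as in \Cref{fig:clasperlink}) is exactly the $\Delta$-move of Murakami--Nakanishi, and their theorem asserts that two links in $S^3$ are $\Delta$-equivalent if and only if their pairwise linking numbers agree. Your handling of the framings is also correct, since the $\Delta$-move preserves self-linking and the diagonal entries agree by hypothesis. So there is nothing to compare against in the paper beyond the citation, and your sketch is an accurate summary of the cited argument.
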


A special case of this is the following proposition.
\begin{proposition}\label{prop:make-standard}
Let $L$ be a framed $q$-component link in $S^3$ with the property that
\[\lk(L_i,  L_j)= \begin{cases} 1 & \text{if } \{i, j \}=\{i_0, j_0\} \\ 0 & \text{else} \end{cases}.\quad\text{ and }\quad
fr(L_i)= \begin{cases} -1 & \text{if } i \in \{i_0, j_0\} \\ \phantom{-}0 & \text{ else} \end{cases}.\]
Then there is a clasper surgery from $(S^3, L)$ to $(S^3, H_q)$, where $H_q$ is the split union of the $(-1)$-framed positive Hopf link with a $0$-framed $(q-2)$-component unlink. 
\end{proposition}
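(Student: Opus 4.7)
The plan is to reduce \Cref{prop:make-standard} to a direct application of \Cref{thm:claspers}. First I would compute the linking matrix of $H_q$: the $(-1)$-framed positive Hopf link contributes a $2\times 2$ block $\left(\begin{smallmatrix}-1 & 1 \\ 1 & -1\end{smallmatrix}\right)$, while the $(q-2)$-component $0$-framed unlink contributes a zero block, and the split union ensures there is no linking between the two sublinks. Up to permutation of the components, this matrix agrees exactly with the linking matrix of $L$ given in the hypotheses (place the $(-1)$-framed Hopf components in positions $i_0, j_0$ and the $0$-framed unknots in the remaining positions).

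Next I would invoke \Cref{thm:claspers}: since $L$ and $H_q$ are two framed $q$-component links in $S^3$ with identical linking matrices, they are related by a finite sequence of clasper surgeries. Composing these surgeries yields the clasper surgery from $(S^3, L)$ to $(S^3, H_q)$ required by the proposition.

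The only point worth flagging is the bookkeeping of component orderings: \Cref{thm:claspers} is stated for ordered links, so one should choose the ordering on $H_q$ so that its $i$th component corresponds to the $i$th component of $L$ (with $i_0, j_0$ labelling the Hopf components). Since clasper surgery preserves pairwise linking numbers and framings, and since the hypothesis matches the described linking matrix of $H_q$ under this ordering, no obstacle arises. In short, the entire content of \Cref{prop:make-standard} is that the hypotheses have been arranged precisely so that the linking matrices coincide, after which \Cref{thm:claspers} does all the work.
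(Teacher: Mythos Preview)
Your proposal is correct and matches the paper's approach exactly: the paper presents \Cref{prop:make-standard} as an immediate special case of \Cref{thm:claspers}, with no separate proof given. The only quibble is terminological: \Cref{thm:claspers} yields a \emph{sequence} of clasper surgeries rather than a single one, and such surgeries do not literally compose into one clasper surgery; but the paper itself uses ``a clasper surgery'' loosely in the statement and then speaks of a length-$M$ sequence when applying the proposition, so this is harmless.
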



\section{$d$-invariants}\label{sec:d-invariants}
The goal of this section is to constrain the $d$-invariants of 3-manifolds obtained by the three topological transformations studied in \Cref{sec:topology}: null-homologous surgeries, negative-definite cobordisms, and clasper surgeries. Along the way in \Cref{thm:d-torelli} we establish a relationship between $d$-invariants and the Torelli group, so as to study the behavior of clasper surgeries. First, the $d$-invariants are governed by Spin$^c$ structures
 on 3- and 4-manifolds, so we begin with a review of those in \Cref{sec:spinc}.  We then give the key properties and computations for the Ozsv\'ath-Szab\'o $d$-invariants \cite{AbsGraded} in \Cref{sec:d-basic}.  In \Cref{sec:d-surgery} we give some basic constraints on the $d$-invariants of surgeries on null-homologous knots in rational homology spheres - \Cref{prop:d-surgery} below.  (This is surely known to experts but we could not find it in the literature.)  Finally, we prove \Cref{thm:d-torelli} in \Cref{sec:d-torelli}. Those familiar with the basics of $d$-invariants  may wish to begin at \Cref{sec:d-surgery}.

\subsection{Spin$^c$ structures in low-dimensions}\label{sec:spinc} For $Y$ a closed, connected, oriented 3-manifold $Y$, a result of Turaev \cite{Turaev} states that $\spinc$ structures on $Y$ can be thought of as equivalence classes of non-vanishing vector fields where two vector fields are equivalent if they are homotopic in the complement of a finite union of balls. This geometric approach to $\spinc$ structures is by no means their only definition, but with the intention of keeping this section user-friendly, we avoid an in-depth discussion of the definition(s) and focus instead on the properties that are key to establish our results. One of these relevant properties is the relationship between $\spinc(Y)$ and $H^2(Y;\Z)$. Taking the first Chern class of a $\spinc$ structure gives a map $$c_1:\spinc(Y)\to H^2(Y;\Z).$$ The image of $c_1$ is precisely $2H^2(Y;\Z)$. A Spin$^c$ structure $\mfs$ is said to be {\em torsion} if $c_1(\mfs)$ is a torsion element of $H^2(Y;\Z)$. A Spin$^c$ 3-manifold is a pair $(Y,\mfs)$. 

Given two 3-manifolds $Y_0$ and $Y_1$ and $\mfs_i\in\spinc(Y_i)$ for $i=0,1$, using the description of $\spinc$ structures as vector fields it is easy to see that there is a well defined element $\mfs_0\#\mfs_1\in\spinc(Y_0\#Y_1)$. Further, $c_1(\mfs \# \mfs')$ is identified with $c_1(\mfs) \oplus c_1(\mfs')$ under the isomorphism $H^2(Y \# Y';\Z) \cong H^2(Y;\Z) \oplus H^2(Y';\Z)$.  Hence, $\mfs \# \mfs'$ is torsion if and only if both $\mfs, \mfs'$ are.\\

Next, let $X$ be a compact, connected, orientable 4-manifold. Recall that an element of $H^2(X;\Z)$ is characteristic if its pairing with each element $\alpha \in H_2(X;\Z)$ agrees mod 2 with the self-intersection of $\alpha$. Similar to the 3-dimensional case, first Chern classes give a map $$c_1:\spinc(X)\to H^2(X;\Z).$$ In this case, the image of $c_1$ is in bijective correspondence with the set of {\em characteristic} elements of $H^2(X;\Z)$. Moreover, given a Spin$^c$ structure $\mft$ on a 4-manifold $X$, there is a restricted Spin$^c$ structure on $\partial X$ and this is natural with respect to the first Chern class, that is, $$c_1(\mft \mid_{\partial X}) = c_1(\mft) \mid_{\partial X}.$$ 

Finally, we remark that Spin$^c$ structures are in fact affine over the second cohomology group.  This implies that for a Spin$^c$ structure $\mathfrak{\nu}$ on a 3- or 4-manifold $M$, we obtain a different Spin$^c$ structure $\mathfrak{\nu} + \alpha$ for $\alpha \in H^2(M;\Z)$.  This affine structure is also natural with regards to restriction to the boundary and satisfies $c_1(\mathfrak{\nu} + \alpha) = c_1(\mathfrak{\nu}) + 2 \alpha$.

\subsection{Review of properties of $d$-invariants}\label{sec:d-basic}
We begin with the simplest version of correction terms or $d$-invariants,   introduced by Ozsv\'ath-Szab\'o in \cite{AbsGraded}. For a Spin$^c$ structure $\mfs$ on a rational homology sphere $Y$, the correction term $d(Y,\mfs)$ is a rational number, and is an invariant of Spin$^c$ rational homology cobordism.  More precisely, if $W$ is a cobordism from $Y_0$ to $Y_1$ with the same rational homology as $[0,1] \times S^3$,  then for any  Spin$^c$ structure $\mft$ on $W$ one has $d(Y_0,\mft|_{Y_0})=d(Y_1,\mft|_{Y_1})$.

\begin{theorem}[Theorem 1.2 and Theorem 9.6 \cite{AbsGraded}]\label{thm:d-properties}
Let $(Y,\mfs)$ and $(Y',\mfs')$ be Spin$^c$ rational homology spheres.  Then:
\begin{enumerate}
\item \label{d:orientation} $d(-Y,\mfs) = -d(Y,\mfs)$ 
\item \label{d:sum} $d(Y \# Y', \mfs \# \mfs') = d(Y,\mfs) + d(Y',\mfs')$
\item \label{d:neg-def} If $(Y,\mfs)$ bounds a negative-definite Spin$^c$ 4-manifold $(W,\mft)$, $$d(Y,\mfs) \geq \frac{c_1(\mft)^2 + b_2(W)}{4}.$$  If $(W,\mft) : (Y,\mfs) \to (Y',\mfs')$ is a negative-definite Spin$^c$ cobordism, then $$d(Y',\mfs') - d(Y,\mfs) \geq \frac{c_1(\mft)^2 - 3\sigma(W) - 2\chi(W)}{4}.$$
\end{enumerate}
\end{theorem}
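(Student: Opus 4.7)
The plan is to defer to the proofs in \cite{AbsGraded} and briefly outline the mechanisms behind each part. Recall that $d(Y,\mfs)$ is defined as the minimum absolute $\Q$-grading of a non-torsion element in $HF^+(Y,\mfs)$, equivalently the grading of the bottom of the infinite $U$-tower arising as the image of the natural map $\pi_Y : HF^\infty(Y,\mfs) \to HF^+(Y,\mfs)$.

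The orientation-reversal statement follows from the natural duality pairing between $HF^+(Y,\mfs)$ and $HF^-(-Y,\mfs)$, which reverses absolute grading and swaps the roles of the $HF^+$ and $HF^-$ towers; combined with the long exact sequence relating $HF^+$, $HF^-$, and $HF^\infty$, this identifies the bottom of the tower on one side with the negative of the bottom of the tower on the other. The additivity under connected sum follows from the K\"unneth formula for Heegaard Floer homology of connected sums, which realizes the infinite $U$-tower of $HF^+(Y\# Y', \mfs\# \mfs')$ essentially as the tensor product of the two individual towers, so that the minimum grading of the combined tower is the sum of the two individual minimum gradings.

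For the negative-definite inequality, the plan is to invoke the Spin$^c$ cobordism map $F^+_{W,\mft}: HF^+(Y,\mfs) \to HF^+(Y',\mfs')$, whose absolute grading shift is $(c_1(\mft)^2 - 2\chi(W) - 3\sigma(W))/4$. The essential input from \cite{AbsGraded} is that when $W$ is negative-definite the corresponding map $F^\infty_{W,\mft}$ on $HF^\infty$ is an isomorphism of $\Z[U,U^{-1}]$-modules; tracking compatible $U$-towers through the naturality square $F^+_{W,\mft} \circ \pi_Y = \pi_{Y'} \circ F^\infty_{W,\mft}$ and comparing minimum gradings on both sides then delivers the stated bound. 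The absolute bounding statement is recovered as the special case $(Y,\mfs) = (S^3, \mfs_0)$ applied to $W' = W\setminus B^4$, using $d(S^3) = 0$, $\sigma(W') = -b_2(W)$, and (for simply connected $W$) $\chi(W') = b_2(W)$ to reduce the shift to $(c_1(\mft)^2 + b_2(W))/4$.

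The principal technical obstacle, and the substantive content handed over to \cite{AbsGraded}, is establishing the grading-shift formula for cobordism maps together with the isomorphism property of $F^\infty_{W,\mft}$ in the negative-definite case. These in turn rely on the construction of cobordism maps via counts of holomorphic triangles and on the model computation of $HF^\infty$ for rational homology spheres.
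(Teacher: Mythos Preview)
The paper does not actually prove this theorem; it is quoted from \cite{AbsGraded}, and the only remark the paper adds is the one-line observation immediately following the statement: the second (cobordism) inequality in part~\eqref{d:neg-def} follows from the first (bounding) inequality together with parts~\eqref{d:orientation} and~\eqref{d:sum}, by attaching a 1-handle to $W$ so that its boundary becomes the connected sum $-Y \# Y'$.

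Your outline of the underlying mechanisms from \cite{AbsGraded} is accurate. The one point worth noting is that you run the reduction in the opposite direction from the paper: you deduce the bounding inequality from the cobordism inequality by deleting a ball from $W$ and viewing $W' = W \setminus B^4$ as a cobordism from $S^3$ to $Y$, whereas the paper deduces the cobordism inequality from the bounding one. Both directions are valid and essentially equivalent once parts~\eqref{d:orientation} and~\eqref{d:sum} are in hand. A small technical comment: in your reduction you only need $b_1(W)=0$ (which is the standard hypothesis for the bounding inequality in \cite{AbsGraded}) rather than simple connectivity, so that $\chi(W') = b_2(W)$ and $\sigma(W') = -b_2(W)$ combine to give $-3\sigma(W') - 2\chi(W') = b_2(W)$.
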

Note that the second claim in \Cref{thm:d-properties}\eqref{d:neg-def} follows from the first by applying \Cref{thm:d-properties}\eqref{d:orientation} and \Cref{thm:d-properties}\eqref{d:sum}.  Also, since homology spheres have a single Spin$^c$ structure, we omit this from the notation when we work with such manifolds.\\

\begin{corollary}~\label{cor:boundingmeans0}
Let $Y$ be a rational homology sphere that bounds a rational homology ball.   Then
\[ d_{\max}(Y):= \max\{ d(Y, \mfs): \mfs \in \text{Spin}^c(Y)\} \geq 0\]
\end{corollary}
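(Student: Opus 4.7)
The plan is to apply the negative-definite Spin$^c$ bound from Theorem~\ref{thm:d-properties}\eqref{d:neg-def} directly to a rational homology ball $W$ bounded by $Y$, so I first want to check that $W$ satisfies the hypotheses of that bound. Since $W$ is a rational homology ball, $H_2(W;\Q)=0$, so in particular $b_2(W)=0$; the intersection form on $H_2(W;\Z)/\text{torsion}$ is the zero form on the zero module, which is vacuously negative-definite. Thus $W$ qualifies as a negative-definite filling.

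Next I need to produce a Spin$^c$ structure $\mft$ on $W$ to plug into the inequality. Every orientable $4$-manifold admits Spin$^c$ structures, so $\spinc(W)$ is nonempty. For any $\mft\in\spinc(W)$, the class $c_1(\mft)\in H^2(W;\Z)$ lies in a group whose rationalization $H^2(W;\Q)$ vanishes (again since $W$ is a rational homology ball). Therefore $c_1(\mft)$ is a torsion class, and its self-pairing $c_1(\mft)^2\in\Q$ is zero. Setting $\mfs=\mft|_Y\in\spinc(Y)$, Theorem~\ref{thm:d-properties}\eqref{d:neg-def} then gives
\[
d(Y,\mfs)\;\geq\;\frac{c_1(\mft)^2+b_2(W)}{4}\;=\;\frac{0+0}{4}\;=\;0.
\]
Taking the maximum over $\spinc(Y)$ yields $d_{\scriptscriptstyle\max}(Y)\geq d(Y,\mfs)\geq 0$.

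There is no serious obstacle here; the content is entirely in verifying the two quantities on the right-hand side of the inequality. The only subtlety worth flagging in the write-up is the remark that $c_1(\mft)^2=0$ for Spin$^c$ structures on $W$ because $H^2(W;\Q)=0$ forces $c_1(\mft)$ to be torsion, so that the cup-square landing in $H^4(W,\partial W;\Q)\cong\Q$ automatically vanishes. With that observation, the result is an immediate corollary of Theorem~\ref{thm:d-properties}\eqref{d:neg-def}.
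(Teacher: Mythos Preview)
Your proof is correct and follows exactly the same approach as the paper's: pick a Spin$^c$ structure $\mft$ on the rational homology ball $W$ and apply \Cref{thm:d-properties}\eqref{d:neg-def}. You have simply spelled out in more detail why $b_2(W)=0$ and $c_1(\mft)^2=0$, details the paper leaves implicit.
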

\begin{proof}
Let $\mathfrak{t}$ be a Spin$^c$ structure on the hypothesized rational homology ball $W$.  By \Cref{thm:d-properties}\eqref{d:neg-def},  we have that 
\[ d_{\max}(Y) \geq d(Y, \mathfrak{t}|_Y) \geq 0.\]
\end{proof}

As mentioned in the introduction, our arguments rely on the computation of the $d$-invariants for surgeries along knots of the form $T_{2,2k+1} \# T_{2,2k+1}$, so this is as good of a place as any to include this computation.

\begin{theorem}[Corollary 1.5, \cite{HFAlternating}]~\label{thm:surgeryalt}
Let $K$ be an alternating knot.  Then, 
\[
d(S^3_{1}(K)) = 2 min \{0, - \lceil -\sigma(K) / 4 \rceil \}.
\]  
Consequently, 
$$d(S^3_{1}(T_{2,2k+1} \# T_{2,2k+1})) =\begin{cases}
-2k & k\geq 0\\
\phantom{-2}0 & k<0.
\end{cases}
$$
\end{theorem}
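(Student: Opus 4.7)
The theorem has two parts: the general formula for alternating knots (cited verbatim from~\cite{HFAlternating}) and the explicit computation for $T_{2,2k+1}\#T_{2,2k+1}$. Only the latter is a genuine derivation, so the plan reduces to a careful application of the first formula.

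First, I would verify that the alternating formula applies to $K_k := T_{2,2k+1}\#T_{2,2k+1}$: the torus knot $T_{2,2k+1}$ is alternating, and a connected sum of alternating knots is alternating. Additivity of the signature under connected sum then gives
\[
\sigma(K_k)\;=\;2\,\sigma(T_{2,2k+1}).
\]

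Second, I would invoke the classical signature computation for $(2,n)$ torus knots. With the sign convention normalized so that $\sigma(T_{2,3}) = -2$, one has $\sigma(T_{2,2k+1}) = -2k$ for every $k \geq 0$. For $k<0$, the knot $T_{2,2k+1}$ is the mirror of $T_{2,\,2(-k-1)+1}$, so $\sigma(T_{2,2k+1}) = -2k - 2$. Combining with the previous display, $\sigma(K_k) = -4k$ when $k \geq 0$ and $\sigma(K_k) = -4k-4$ when $k < 0$.

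Finally, I would substitute into $2\min\{0,\,-\lceil -\sigma(K_k)/4\rceil\}$. When $k \geq 0$, $-\sigma(K_k)/4 = k$ is a nonnegative integer, so $\lceil -\sigma(K_k)/4\rceil = k$ and $\min\{0,-k\} = -k$, yielding $-2k$. When $k < 0$, $-\sigma(K_k)/4 = k+1 \leq 0$ is an integer, so $\lceil -\sigma(K_k)/4\rceil = k+1$ and $\min\{0,-k-1\} = 0$ since $-k-1 \geq 0$, yielding $0$. The entire argument is a mechanical check; the only mild pitfall is tracking the signature sign convention carefully through the mirroring step for $k<0$, and once that is pinned down the computation is routine.
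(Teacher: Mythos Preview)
Your proposal is correct and is exactly the routine verification the paper leaves implicit: the paper simply cites \cite{HFAlternating} for the general formula and states the ``Consequently'' without further comment, so your check that $T_{2,2k+1}\#T_{2,2k+1}$ is alternating, your signature computation (including the mirroring for $k<0$), and the substitution into the formula are precisely what is needed. There is nothing to compare against beyond this.
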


\subsection{$d$-invariants of surgeries}\label{sec:d-surgery}
The goal of this subsection is to prove \Cref{prop:d-surgery}, which is concerned with the $d$-invariants of non-zero surgeries on null-homologous knots in rational homology spheres. Our analysis will require the study of correction terms for 3-manifolds $N$ with $b_1(N)>0$. An issue thus arises since $d$-invariants are not defined for Spin$^c$ structure in a general 3-manifold. However, if $\mfs$ is a torsion Spin$^c$ structure, i.e. $c_1(\mfs)$ is torsion in $H^2(Y)$, and $(Y, \mfs)$ has {\em standard} $HF^\infty$, then Ozsv\'ath-Szab\'o defined a correction term $d_b(Y,\mfs)\in\Q$ that is again an invariant of $\spinc$ rational homology cobordism\cite[Section 9]{AbsGraded}. Having standard $HF^\infty$ means that $HF^\infty(Y,\mfs) \cong HF^\infty(\#_{b_1(Y)} S^2 \times S^1, \mfs_0)$, where $\mfs_0$ is the unique torsion Spin$^c$ structure on $\#_{b_1(Y)} S^2 \times S^1$. We remark that if $Y$ is a rational homology sphere and $\mfs$ is any Spin$^c$ structure on $Y$, then $(Y, \mfs)$ has standard $HF^{\infty}$ \cite[Theorem 10.1]{HFPA}.  Similar to \Cref{thm:d-properties}, we have the following: 

\begin{theorem}[Theorem 1.5, Proposition 2.5, and Theorem 10.1 in \cite{HFPA}, Theorem 9.15 in \cite{AbsGraded}, Proposition 4.3 in \cite{LR}, Proposition 4.0.5 in \cite{KPThesis}]\label{thm:d-properties-general}
Let $(Y,\mfs)$ and $(Y',\mfs')$ be 3-manifolds with standard $HF^\infty$ equipped with torsion Spin$^c$ structures.  Then:
\begin{enumerate}
\item \label{d:QHS3} if $Y$ is a rational homology sphere,  then $d(Y,\mfs) = d_b(Y,\mfs)$. 
\item \label{d:orientation-general} $(-Y,\mfs)$ has standard $HF^\infty$, but $d_b(-Y,\mfs)$ and $d_b(Y,\mfs)$ are not necessarily related.
\item \label{d:connect-sum-general} $(Y \# Y',\mfs \# \mfs')$ has standard $HF^\infty$ and $d_{b}(Y \# Y', \mfs \# \mfs') = d_{b}(Y,\mfs) + d_{b}(Y',\mfs')$.    
\item \label{d:negative-definite-general} If $(Y,\mfs)$ is the boundary of a negative semidefinite Spin$^c$ 4-manifold $(W,\mft)$ with $b_1(W) = 0$, then
 $$d_{b}(Y,\mfs) \geq \frac{c_1(\mft)^2 + b_2^-(W) - 2b_1(Y)}{4}.$$
\item \label{d:circle-bundles} If $Y$ is a circle bundle over a genus $g$ surface with Euler number $n$, and $\mfs$ is any torsion Spin$^c$ structure, then $(Y,\mfs)$ has standard $HF^\infty$ if and only if $n \neq 0$ or $g = 0$.  
\end{enumerate}
\end{theorem}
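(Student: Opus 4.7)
The statement is a compilation of known properties of correction terms for 3-manifolds with standard $HF^\infty$, so my plan is to verify each item by matching it with the cited references and organizing the argument around the common theme that the standard-$HF^\infty$ condition is preserved under natural operations and controls the $d_b$-invariant through the grading structure.

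Part (1) is essentially by definition: when $b_1(Y) = 0$, the standard $HF^\infty(Y,\mfs)$ is a single infinite tower, and the general definition of $d_b$ collapses to the usual correction term of \cite{AbsGraded}. Part (2) would follow from the duality between the Heegaard Floer chain complexes of $Y$ and $-Y$, which identifies their $HF^\infty$ modules and so preserves standardness; however, the grading shifts on dual complexes need not be compatible, so the failure of any simple $d_b(-Y, \mfs) = -d_b(Y,\mfs)$ identity for non-rational-homology-spheres can be made explicit with the examples worked out in \cite[Section 10]{HFPA} (e.g., $\pm S^2 \times S^1$ with suitable torsion $\spinc$-structure).

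For part (3), I would apply the Künneth formula for $HF^\infty$ to see that $HF^\infty(Y \# Y', \mfs \# \mfs')$ is again standard whenever both summands are, and that the absolute grading on the connected sum is additive in the graded pieces of each factor; this yields the additivity $d_b(Y \# Y', \mfs \# \mfs') = d_b(Y, \mfs) + d_b(Y', \mfs')$ as stated in \cite[Theorem 9.15]{AbsGraded}. Part (5) is a direct application of the computation of $HF^\infty$ for Seifert-fibered 3-manifolds in \cite[Theorem 10.1]{HFPA}, where the dichotomy between standard and non-standard $HF^\infty$ is governed exactly by whether the Euler number vanishes in the positive genus case.

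The most delicate piece is part (4), the negative semidefinite inequality with the $-2b_1(Y)$ correction. My plan is to run the negative-definite cobordism argument of \cite[Theorem 9.6]{AbsGraded} at the level of $HF^\infty$: any $\spinc$ cobordism $(W,\mft)$ with $b_1(W) = 0$ from a 3-manifold with standard $HF^\infty$ induces a map on $HF^\infty$ whose grading shift is controlled by $\tfrac{c_1(\mft)^2 - 2\chi(W) - 3\sigma(W)}{4}$, and the negative semidefinite hypothesis (combined with $b_1(W) = 0$) forces this map to be nontrivial on the appropriate tower. The $-2b_1(Y)$ correction then reflects the grading offset between $d_b(Y,\mfs)$, which tracks the bottommost tower, and the corresponding tower of $HF^\infty(\#_{b_1(Y)} S^2 \times S^1)$. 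The main obstacle I anticipate is reconciling the slightly different conventions across \cite{HFPA, LR, KPThesis} for normalizing $d_b$ and for tracking the grading shifts under negative semidefinite (rather than strictly negative definite) cobordisms; once this bookkeeping is aligned, the stated inequality should drop out cleanly from \cite[Proposition 4.3]{LR} and its refinement in \cite[Proposition 4.0.5]{KPThesis}.
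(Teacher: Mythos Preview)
The paper does not give a proof of this theorem at all: it is stated as a compilation of results from the literature, with each part attributed to the references listed in the theorem header (Ozsv\'ath--Szab\'o, Levine--Ruberman, and Park's thesis), and then used as a black box in the proof of \Cref{prop:d-surgery}. Your proposal correctly identifies this and sketches how each item traces back to the cited sources; that is already more than the paper attempts, and your outline of parts (1)--(5) is accurate in spirit.
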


Recall that a 4-manifold is {\em negative semidefinite} if the intersection form is negative definite on the non-degenerate part. The primary example is any 4-manifold with vanishing intersection form. For the readers familiar with other notions of $d$-invariants we remark that $d_b(-Y,\mfs)=-d_{top}(Y,\mfs)$, where $d_{top}$ is the ``topmost'' $d$-invariant --see for example \cite{LR}.  

In order to proof \Cref{prop:d-surgery}, we need we will need to guarantee that the $\spinc$ structures on the 3-manifolds under consideration are torsion. The following homological lemma accomplishes this.  

\begin{lemma}\label{lem:tors-spinc}
Let $(X,\mft)$ be a $\spinc$ 4-manifold and $S_g$ an orientable embedded surface of genus $g$ and self-intersection $n$.  Let $N_{n,g}$ be the boundary of a tubular neighborhood of $S_g$, i.e. a circle bundle over $S_g$ with Euler number $n$. 
If either $n\neq 0$ or $\langle c_1(\mft), [S_g] \rangle_X = 0$,  then $\mft \mid_{N_{n,g}}$ is torsion.
\end{lemma}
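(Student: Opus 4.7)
The plan is to reduce the problem to linear algebra via the Gysin sequence for the circle bundle $\pi: N_{n,g} \to S_g$, combined with the geometric observation that the inclusion $i: N_{n,g} \hookrightarrow X$ factors through the tubular neighborhood $\nu = \nu(S_g)$, which deformation retracts onto $S_g$.

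First I would write down the relevant portion of the Gysin sequence with $\Z$ coefficients,
\[
H^0(S_g) \xrightarrow{\cup e} H^2(S_g) \xrightarrow{\pi^*} H^2(N_{n,g}) \xrightarrow{\pi_*} H^1(S_g) \to 0,
\]
where $e \in H^2(S_g) \cong \Z$ is the Euler class, identified with the integer $n$. This gives a short exact sequence $0 \to \Z/n \to H^2(N_{n,g}) \to \Z^{2g} \to 0$, so the image of $\pi^*$ is exactly the torsion subgroup $\Z/n$ when $n \neq 0$, and is a free summand $\Z$ when $n = 0$. In particular, every class in the image of $\pi^*$ is annihilated by $n$. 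Next, I would argue that $c_1(\mft|_{N_{n,g}})$ always lies in the image of $\pi^*$. Since $\nu$ deformation retracts onto $S_g$ via an extension of $\pi$ and $N_{n,g} = \partial \nu$, the map $H^2(\nu) \to H^2(N_{n,g})$ induced by inclusion is identified with $\pi^*: H^2(S_g) \to H^2(N_{n,g})$. Factoring $i$ through $\nu$ therefore gives $c_1(\mft)|_{N_{n,g}} = \pi^* \alpha$, where $\alpha \in H^2(S_g) \cong \Z$ corresponds under evaluation on the zero section to $\langle c_1(\mft), [S_g] \rangle_X$ by naturality of the Kronecker pairing.

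Combining these two observations gives the result. When $n \neq 0$, $\pi^*\alpha$ is automatically $n$-torsion, so $c_1(\mft|_{N_{n,g}})$ is torsion regardless of $\alpha$. When $n = 0$, the image of $\pi^*$ is free of rank one, so $\pi^* \alpha$ is torsion iff it vanishes iff $\alpha = 0$, which is precisely the hypothesis $\langle c_1(\mft), [S_g] \rangle_X = 0$. I do not anticipate a serious obstacle in this argument; the only points requiring care are the orientation conventions identifying the Euler class of $N_{n,g} \to S_g$ with the self-intersection $n$, and the verification that under the deformation retraction $\nu \simeq S_g$ the restricted class $c_1(\mft)|_\nu$ corresponds to the integer $\langle c_1(\mft), [S_g] \rangle_X$. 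Both are standard, so the proof reduces to a clean assembly of the Gysin sequence and the factorization through the tubular neighborhood.
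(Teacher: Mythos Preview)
Your proof is correct and takes a somewhat different, more algebraic route than the paper. The paper argues on the homology side: it identifies explicit generators of the free part of $H_2(N_{n,g};\Z)$ as the tori $T_\gamma$ obtained by restricting the circle bundle to loops $\gamma\subset S_g$ (together with $[S_g]$ when $n=0$), and then shows $\langle c_1(\mft),[T_\gamma]\rangle_X=0$ by observing that each $T_\gamma$ bounds a solid torus in the disk bundle $\nu(S_g)\subset X$. You instead work directly in cohomology via the Gysin sequence, identifying the torsion subgroup of $H^2(N_{n,g};\Z)$ as the image of $\pi^*$ and then noting that $c_1(\mft)|_{N_{n,g}}$ lands in this image because the inclusion factors through $\nu(S_g)\simeq S_g$. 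These are essentially Poincar\'e-dual versions of the same observation: the vanishing of $[T_\gamma]$ in $H_2(\nu)$ is dual to the statement that $H^2(\nu)\to H^2(N_{n,g})$ has image orthogonal to the $T_\gamma$. Your packaging is cleaner in that the Gysin sequence handles the bookkeeping of what the torsion subgroup is without naming generators, while the paper's version is more hands-on and makes the geometric reason (the tori bound) completely explicit.
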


\begin{proof}
Recall that a $\spinc$ structure $\mfs$ on $N_{n.g}$ is torsion if $c_1(\mfs)$ is a torsion element of $H^2(N_{n,g};\Z)$, and that $$H_2(N_{n,g};\Z)=\begin{cases}
\Z^{2g} & \text{if } n\neq 0\\
\Z^{2g+1} & \text{if } n=0
\end{cases},$$
generated entirely by the tori $T_\gamma$ obtained from restricting the circle bundle to loops $\gamma$ in $S_g$ whenever $n\neq 0$, and these tori plus the homology class of $S_g$ when $n=0$.

Thus to show that $\mft \mid_{N_{n,g}}$ is torsion, it suffices to show that $c_1(\mft \mid_{N_{n,g}})$ evaluates to be trivial on the generators of $H_2(N_{n,g};\Z)$. By the naturality of $c_1$ for $\spinc$ structures we have $$\langle c_1(\mft\mid_{N_{n,g}}), [T_\gamma] \rangle_{N_{n,g}} =\langle c_1(\mft)_{N_{n,g}}, [T_\gamma] \rangle_{N_{n,g}} = \langle c_1(\mft), [T_\gamma] \rangle_X.$$
However, in a neighborhood of $S_g$ in $X$ one can extend the circle fibers over the disk fiber, and so $T_\gamma$ bounds a solid torus. Consequently $\langle c_1(\mft), [T_\gamma] \rangle_X = 0$, and this completes the proof.
\end{proof}

To establish notation, given a null-homologous knot $K$ in a 3-manifold $Y$, let 
\begin{equation}\label{2-handlecob}
W_n(K)=[0,1]\times Y \bigcup D^2\times D^2 
\end{equation}
denote the 2-handle cobordism from $Y$ to $Y_n(K)$. Note that $\spinc(Y_n(K)) = \spinc(Y) \oplus \Z/n$.  Every $\spinc$ structure on $Y_n(K)$ extends over $W_n(K)$ and the operator of extend over $W_n(K)$ and restrict to $Y$ induces projection from $\spinc(Y_n(K))$ to $\spinc(Y)$ in the splitting above.  See \cite{HFKZ} for more details.

\begin{proposition}\label{prop:d-surgery}
Let $n\neq 0$ be an integer and $g \geq 0$.  There exists a constant $C_{g,n}$ that depends on only $|n|$ and $g$ with the following property.
For any null-homologous knot $K$ that bounds a genus $g$ surface in a rational homology sphere $Y$,  
and any $\mfs \in \spinc(Y)$ and $\mfs_n \in \spinc(Y_n(K))$ that are cobordant over $W_n(K)$,  we have 
\[|d(Y_n(K),\mfs_n) - d(Y,\mfs)| \leq C_{g,n}.\] 
\end{proposition}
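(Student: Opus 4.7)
The plan is to drill out a capped-off Seifert surface from the 2-handle cobordism $W_n(K)$ and apply the $d_b$-invariant inequality of \Cref{thm:d-properties-general}\eqref{d:negative-definite-general} to the resulting 4-manifold. Let $F \subset Y$ be a genus-$g$ Seifert surface for $K$, and let $\hat F$ be the closed genus-$g$ surface in $W_n(K)$ obtained by capping $F$ with the core of the 2-handle, so that $[\hat F]^2 = n$. Set $W_n^\circ := W_n(K) \smallsetminus \nu(\hat F)$, a 4-manifold with three boundary components: $-Y$, $Y_n(K)$, and the circle bundle $N_{n,g}$ over $\Sigma_g$ of Euler number $n$. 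Since $\hat F$ generates $H_2(W_n(K); \Q)$ and has nonzero self-intersection, a Mayer--Vietoris computation shows that every rational $2$-cycle in $W_n^\circ$ is represented on $N_{n,g}$; hence the natural map $H_2(W_n^\circ; \Q) \to H_2(W_n^\circ, \partial W_n^\circ; \Q)$ is zero, the rational intersection form of $W_n^\circ$ vanishes, and $b_1(W_n^\circ) = 0$.

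Next, fix an extension $\mft$ of $(\mfs, \mfs_n)$ to $W_n(K)$, and let $\mft'$ denote its restriction to $W_n^\circ$. Because $n \neq 0$, \Cref{lem:tors-spinc} implies that $\mft'|_{N_{n,g}}$ is torsion, and \Cref{thm:d-properties-general}\eqref{d:circle-bundles} implies that $(N_{n,g}, \mft'|_{N_{n,g}})$ has standard $HF^\infty$, so $d_b(N_{n,g}, \mft'|_{N_{n,g}})$ is defined. Since $c_1(\mft')$ is torsion on every boundary component it lifts to a relative class, and combined with the vanishing intersection form this forces $c_1(\mft')^2 = 0$. As $W_n^\circ$ is both negative and positive semidefinite with $b_1 = 0$, applying \Cref{thm:d-properties-general}\eqref{d:negative-definite-general} to $W_n^\circ$ (with $d_b$ and $b_1$ additive over the disjoint-union boundary, and using \Cref{thm:d-properties-general}\eqref{d:QHS3} to identify the $d_b$-invariants of the rational homology sphere components with the usual $d$-invariants) and to its orientation reverse $-W_n^\circ$ yields
\[
|d(Y_n(K), \mfs_n) - d(Y, \mfs)| \leq g + \max_{Y', \mathfrak{r}} |d_b(Y', \mathfrak{r})|,
\]
where the maximum ranges over $Y' \in \{N_{n,g}, -N_{n,g}\}$ and torsion $\spinc$ structures $\mathfrak{r}$ on $Y'$.

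Finally, the torsion $\spinc$ structures on $\pm N_{n,g}$ form a torsor over the torsion subgroup of $H^2(N_{n,g}; \Z)$, a finite group of order depending only on $n$ and $g$, so the maximum above is bounded by a constant $M_{g,n}$ and $C_{g,n} := g + M_{g,n}$ satisfies the claimed bound. The main obstacle will be to justify the multi-boundary version of \Cref{thm:d-properties-general}\eqref{d:negative-definite-general} via additivity of $d_b$ and $b_1$ over disjoint unions, and to carefully verify $c_1(\mft')^2 = 0$ despite $H_2(W_n^\circ; \Q)$ being potentially nonzero---the key point being that every class in $H_2(W_n^\circ; \Q)$ comes from $H_2(N_{n,g}; \Q)$, hence vanishes in $H_2(W_n^\circ, \partial W_n^\circ; \Q)$, so all self- and mutual intersections are trivial.
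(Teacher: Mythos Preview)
Your argument is correct and in fact slightly cleaner than the paper's. The paper treats the two inequalities asymmetrically: for $n<0$ it uses the $2$-handle cobordism $W_n(K)$ itself (which is genuinely negative definite) together with a careful adjustment of $\mft$ by multiples of $n[S]$ to bound $c_1(\mft)^2$, and only drills out $\hat F$ for the other direction. You instead drill once and apply the semidefinite inequality to both orientations of $W_n^\circ$, which avoids the $c_1^2$ bookkeeping entirely at the cost of picking up $d_b(-N_{n,g},\nu)$ as well as $d_b(N_{n,g},\nu)$; since $-N_{n,g}\cong N_{-n,g}$ also has standard $HF^\infty$ for $n\neq 0$, this is harmless.

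The one point you flag yourself is the only real gap: \Cref{thm:d-properties-general}\eqref{d:negative-definite-general} is stated for a connected boundary, and $d_b$ is only asserted to be additive under connected sum, not disjoint union. The fix is exactly what the paper does---remove tubular neighborhoods of arcs joining the three boundary components of $W_n^\circ$ (equivalently, attach $1$-handles along the boundary). This does not change $b_1$ or the intersection form of the $4$-manifold, and it replaces $-Y\sqcup Y_n(K)\sqcup N_{n,g}$ by the connected sum $-Y\# Y_n(K)\# N_{n,g}$, where \Cref{thm:d-properties-general}\eqref{d:connect-sum-general} applies directly. With that tubing step inserted, your proof goes through verbatim.
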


Before proving the proposition, a few remarks are in order: 
\begin{remark}\phantom{.}

\begin{itemize}
\item The distinction between $|n|$ and $n$ is purely cosmetic, since we can take the max of $C_{g,n}$ and $C_{g,-n}$.  The advantage of stating it this way is simply for ease of exposition later.
\item This result was proved by Fr\o yshov for $\pm 1$-framed surgeries for the instanton $h$-invariant (see \cite[Corollary 1]{Froyshov}).  
\item For knots in $S^3$, the result follows easily from the Ni-Wu $d$-invariant surgery formula \cite{NiWu}.  The arguments there can be generalized to prove \Cref{prop:d-surgery} if one allows the constant to depend on the ambient manifold $Y$ as well. 
\item There are analogues of \Cref{prop:d-surgery} for more general cobordisms between 3-manifolds, but we do not state or prove those here for ease of exposition.
\end{itemize}
\end{remark}

\begin{proof}[Proof of \Cref{prop:d-surgery}] Consider a rational homology sphere $Y$ and $K\subset Y$ a null-homologous knot. Let $W=W_n(K)$ be as in \Cref{2-handlecob}. Assume for simplicity that $n<0$.

To get a lower bound on $d(Y_n(K), \mfs_n) - d(Y, \mfs)$, notice that $W$ is a negative definite manifold with oriented boundary $-Y \coprod Y_n(K)$. By \Cref{thm:d-properties}\eqref{d:negative-definite-general}
 $$d(Y_n(K), \mfs_n) - d(Y, \mfs)\geq\frac{c_1(\mft)^2+1}{4}$$
for any Spin$^c$ structure $\mft$ on $W$ which restricts to $\mfs$ on $Y$ and $\mfs_n$ on $Y_n(K)$. We would like to choose $\mft$ giving a lower bound on $c_1(\mft)^2$ independent of $Y, K, \mfs, \mfs_n$.  Let $[S]$ be a generator of $H^2(W)$, and note that $(k[S])^2 = \frac{k^2}{n}$.  By naturality of the affine $H^2$ structure for Spin$^c$ structures, observe that $\mft$ and $\mft + n[S]$ restrict to the same Spin$^c$ structure on both $Y$ and $Y_n(K)$.  Therefore, by adjusting with copies of $[S]$, we can find $\mft$ on $W$ restricting to $\mfs$ and $\mfs_n$ such that $|c_1(\mft)|^2 \leq 4|n|$.  Thus, we obtain a lower bound on $d(Y_n(K), \mfs_n) - d(Y, \mfs)$, which depends only on $n$.

Unfortunately $-W$ is not negative definite, and so we cannot simply reverse orientations to obtain a lower bound. Luckily,  if we excise the homology from $-W$, then the resulting manifold is negative semidefinite and we can obtain a similar bound. However, this forces us to consider a correction term determined by the particular representative of homology we excise. To be more precise, for $F$ a genus $g$ Seifert surface for $K$ in $Y$, denote by $\widehat{F}_g$ the closed surface obtained by pushing $\{1\}\times F$ into the interior of $[0,1]\times Y\subset W$, and capping it off with the core of the 2-handle. Notice that $H_2(W;\Z)$ is generated by the homology class of $\widehat{F}_g$, so `excising' second homology from $W$ amounts to removing a tubular neighborhood of $\widehat{F}_g$. To this end construct a 4-manifold $Z$ by taking the union of collar neighborhoods of $Y,\,Y_n(K)$ in $W$, and tubular neighborhoods of arcs joining $Y,\,Y_n(K)$ to $\widehat{F}_g$ (see \Cref{tubing} for a schematic diagram of this construction). \\
\begin{figure}[h!]
\centering
\def\svgwidth{0.75\textwidth}
\begingroup%
  \makeatletter%
  \providecommand\color[2][]{%
    \errmessage{(Inkscape) Color is used for the text in Inkscape, but the package 'color.sty' is not loaded}%
    \renewcommand\color[2][]{}%
  }%
  \providecommand\transparent[1]{%
    \errmessage{(Inkscape) Transparency is used (non-zero) for the text in Inkscape, but the package 'transparent.sty' is not loaded}%
    \renewcommand\transparent[1]{}%
  }%
  \providecommand\rotatebox[2]{#2}%
  \newcommand*\fsize{\dimexpr\f@size pt\relax}%
  \newcommand*\lineheight[1]{\fontsize{\fsize}{#1\fsize}\selectfont}%
  \ifx\svgwidth\undefined%
    \setlength{\unitlength}{388.4606809bp}%
    \ifx\svgscale\undefined%
      \relax%
    \else%
      \setlength{\unitlength}{\unitlength * \real{\svgscale}}%
    \fi%
  \else%
    \setlength{\unitlength}{\svgwidth}%
  \fi%
  \global\let\svgwidth\undefined%
  \global\let\svgscale\undefined%
  \makeatother%
  \begin{picture}(1,0.34380297)%
    \lineheight{1}%
    \setlength\tabcolsep{0pt}%
    \put(0,0){\includegraphics[width=\unitlength,page=1]{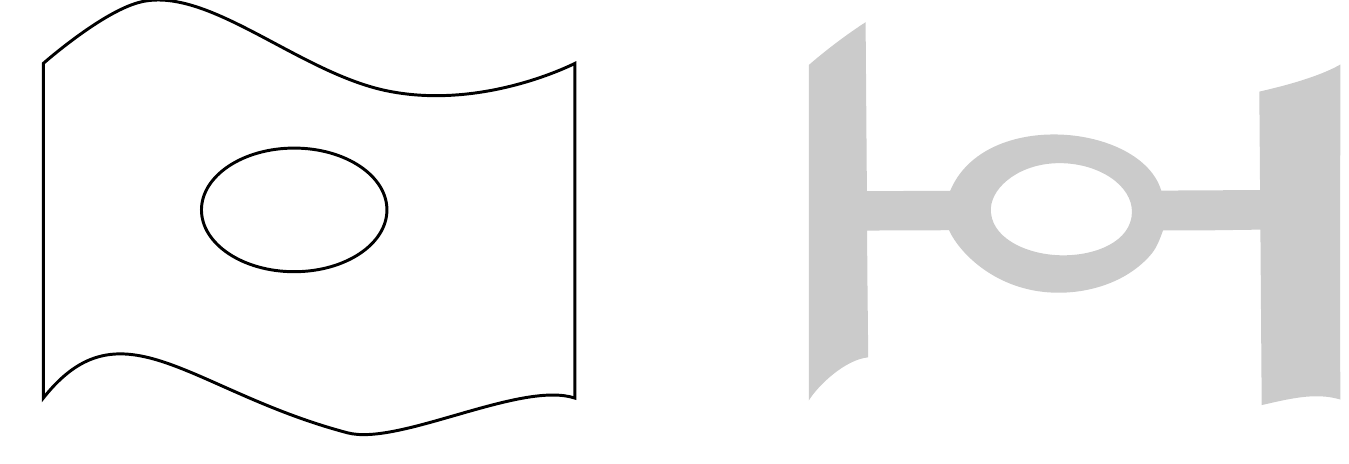}}%
    \put(0.94165679,0.10697396){\color[rgb]{0,0,0}\makebox(0,0)[lt]{\lineheight{1.25}\smash{\begin{tabular}[t]{l}$Z$\end{tabular}}}}%
    \put(0.57273377,0.00582978){\color[rgb]{0,0,0}\makebox(0,0)[lt]{\lineheight{1.25}\smash{\begin{tabular}[t]{l}$-Y$\end{tabular}}}}%
    \put(0.99628334,0.00621443){\color[rgb]{0,0,0}\makebox(0,0)[rt]{\lineheight{1.25}\smash{\begin{tabular}[t]{r}$Y_n(K)$\end{tabular}}}}%
    \put(-0.00233795,0.00582978){\color[rgb]{0,0,0}\makebox(0,0)[lt]{\lineheight{1.25}\smash{\begin{tabular}[t]{l}$-Y$\end{tabular}}}}%
    \put(0.42667321,0.00621443){\color[rgb]{0,0,0}\makebox(0,0)[rt]{\lineheight{1.25}\smash{\begin{tabular}[t]{r}$Y_n(K)$\end{tabular}}}}%
    \put(0.19353809,0.18333787){\color[rgb]{0,0,0}\makebox(0,0)[lt]{\lineheight{1.25}\smash{\begin{tabular}[t]{l}$\widehat{F}_g$\end{tabular}}}}%
    \put(0.23059987,0.08380505){\color[rgb]{0,0,0}\makebox(0,0)[lt]{\lineheight{1.25}\smash{\begin{tabular}[t]{l}$W_n(K)$\end{tabular}}}}%
    \put(0,0){\includegraphics[width=\unitlength,page=2]{tubing.pdf}}%
  \end{picture}%
\endgroup%

\caption{The surface $\widehat{F}_g$ in $W_n(K)$ (left) and a cobordism $Z$ from $Y\sqcup -Y_n(K)$ to $Y \# N_{n,g} \# -Y_n(K)$ (right).}\label{tubing}
\end{figure}

We will obtain a lower bound for $d(Y,\mfs) - d(Y_n(K),\mfs_n)$ from \Cref{thm:d-properties-general}\eqref{d:negative-definite-general} applied to the manifold $X=-W\setminus -Z$. In order to do so, one has to understand the boundary of $X$, its intersection form, and  the restriction of elements of Spin$^c(X)$ to $\partial X$. With this in mind, denote by $N_{n,g}$ the boundary of a neighborhood of $\widehat{F}_g$, namely a circle bundle of Euler number $n$ over a genus $g$ surface. We then have 
$$\partial Z=-Y\sqcup \left(Y \# N_{n,g} \# -Y_n(K)\right) \sqcup Y_n(K), \text{ so that } \partial X=Y \# N_{n,g} \# -Y_n(K).$$
Next, using the Mayer-Vietoris sequence for the decomposition $-W=X\underset{\partial X}{\cup} -Z$, we get that the inclusion of $\partial X$ into $X$ induces an isomorphism on rational second homology and that $b_1(X) = 0$. In addition, since any surface in $X$ can be pushed off itself in a collar of $\partial X$, the intersection form on $X$ vanishes. It follows that $X$ is a negative semidefinite manifold (as well as a positive semidefinite one).  
Finally, we study the Spin$^c$ structures on $X$ so that we can invoke \Cref{thm:d-properties-general}\eqref{d:negative-definite-general}. By assumption, $\mfs \in \spinc(Y)$ and $\mfs_n \in \spinc(Y_n(K))$ are cobordant over $W$, that is, there exists a $\spinc$ structure $\mft$ on $W$ which restricts to $\mfs_n$ and $\mfs$ on the ends. Thus, $(Y \# N_{n,g} \# -Y_n(K), \mfs \# \nu  \# \mfs_n)$ is the $\spinc$ boundary of $(X,\mft|_{X})$.

To complete the proof, notice first that a combination of \Cref{thm:d-properties-general}\eqref{d:negative-definite-general} - \eqref{d:circle-bundles} gives $$d(Y,\mfs)+d_{b}(N_{n,g},\nu)-d(Y_n(K),\mfs_n) \geq \frac{c_1(\mft)^2 + b_2^-(X) - 2b_1(\partial X)}{4}=\frac{0 + 0 - 4g}{4}=-g.$$ To see this, note that the intersection form of $X$ vanishes, so $c_1(\mft)^2 = b_2^-(X) = 0$. Since there are only finitely many torsion Spin$^c$ structures on $N_{n,g}$, the maximum value of $|d_b(N_{n,g},\nu)|$ over the torsion elements of $\spinc(N_{n,g})$ is an invariant of $N_{n,g}$ and hence only depends on $n$ and $g$. We thus get the lower bound for $d(Y,\mfs) - d(Y_n(K),\mfs_n)$ sought.
\end{proof}

\subsection{$d$-invariants and the Torelli group}\label{sec:d-torelli}
In this subsection we prove \Cref{thm:d-torelli} regarding the effect Torelli surgeries have on $d$-invariants. To begin, let $\Sigma_g$ be a fixed genus $g$ surface. If $\mathcal{M}_g$ denotes the mapping class group of $\Sigma_g$, then the Torelli group $\mathcal{T}_g$ is the subgroup of $\mathcal{M}_g$ consisting of elements $\phi:\Sigma_g\to\Sigma_g$ for which the induced map $\phi_*:H_1(\Sigma_g;\Z)\to H_1(\Sigma_g;\Z)$ is the identity.  Note that $\mathcal{T}_1$ is trivial,  $\mathcal{T}_2$ is generated by Dehn twists along separating curves,  and $\mathcal{T}_g$  is generated by bounding pair maps for $g \geq 3$,  as follows from~\cite{Birman,Johnson,Powell}. 
These last maps are defined as follows: consider a pair of non-separating curves $\alpha_\pm$ on $\Sigma_g$ that separates $\Sigma_g$ into two subsurfaces having $\alpha_-\sqcup\alpha_+$ as their common (unoriented) boundary. A bounding pair map consists of the composition of a positive Dehn twist along $\alpha_+$ and a negative Dehn twist along $\alpha_-$.  For more information on the above, see \cite{HatcherMargalit}.

Let $S$ be an embedded (parameterized) copy of $\Sigma_g$ inside of a 3-manifold $Y$.  For $\phi \in \mathcal{T}_g$, let $Y_\phi$ denote the result of removing $S$ and gluing it back using $\phi$.  Since $\phi$ is in the Torelli group, $Y_\phi$ and $Y$ have the same homology. The following theorem is the main result of this subsection. 

\begin{reptheorem}{thm:d-torelli}
Let $\phi$ be an element of the Torelli group for a closed surface $\Sigma_g$.  Express $\phi$ as a product of $k$ separating Dehn twists and/or bounding pair maps.  There exists a constant $C_{k,g}$ depending only on the length of this word and the genus of the surface, with the following property.  If $Y$ is any homology sphere and $Y_\phi$ is the result of removing an embedded (parameterized) genus $g$ surface in $Y$ and regluing by $\phi$, then 
\[
|d(Y) - d(Y_\phi)| \leq C_{k,g}.
\]
\end{reptheorem}

Since a clasper surgery is a Torelli surgery on a genus 3 handlebody always with the same monodromy, we have:
\begin{corollary}\label{cor:d-clasper}
There exists a constant $C$ such that if $M$ is an integer homology sphere obtained by clasper surgery on another integer homology sphere $N$, then $|d(M) - d(N)| \leq C$.  
\end{corollary}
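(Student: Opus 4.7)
The plan is a direct application of \Cref{thm:d-torelli}, leveraging the observation (already recorded in \Cref{sec:topology}) that clasper surgery admits an equivalent description as removing an embedded genus-$3$ handlebody $H$ from a 3-manifold and regluing it via a specific element $\psi$ of the Torelli group $\mathcal{T}_3$ of $\partial H \cong \Sigma_3$. The crucial point, and the only content beyond \Cref{thm:d-torelli} itself, is that $\psi$ is a \emph{fixed} mapping class, independent of both the ambient manifold and the embedding of $H$: it is the single element that takes the standard genus-$3$ splitting depicted on the left of \Cref{fig:clasper} to the ``Borromean'' splitting on the right, and $\psi$ lies in $\mathcal{T}_3$ because the relevant curves on the two sides are pairwise homologous.

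Fix, once and for all, a factorization of $\psi$ as a product of separating Dehn twists and bounding pair maps; such a factorization exists by the Birman--Powell--Johnson generating results for $\mathcal{T}_g$ with $g \geq 3$ recalled in \Cref{sec:d-torelli}. Let $k_0$ denote the length of this word. Both $k_0$ and the genus $g = 3$ are absolute constants, depending on nothing other than the (universal) definition of clasper surgery.

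Now let $N$ be an integer homology sphere and $M$ the result of a single clasper surgery on $N$. Viewing $\partial H \subset N$ as an embedded, parameterized copy of $\Sigma_3$, the manifold $M$ is precisely $N_\psi$ in the notation of \Cref{thm:d-torelli}. Applying that theorem to this embedded surface, with the chosen word for $\psi$, yields
\[
|d(N) - d(M)| \leq C_{k_0,\, 3}.
\]
Setting $C := C_{k_0, 3}$ completes the proof. I expect no real obstacle: the entire analytic content is packaged in \Cref{thm:d-torelli}, and the corollary reduces to the combinatorial observation that the clasper monodromy is universal. The only point worth flagging is that \Cref{thm:d-torelli} is stated for an \emph{arbitrary} embedded parameterized genus-$g$ surface (rather than, say, a Heegaard surface), so no further argument is needed to accommodate the fact that $\partial H$ need not split $N$ into two handlebodies.
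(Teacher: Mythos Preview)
Your proposal is correct and follows exactly the paper's approach: the paper deduces the corollary from \Cref{thm:d-torelli} in a single sentence, observing that clasper surgery is a Torelli surgery along a genus-$3$ surface with a fixed monodromy $\psi$. Your write-up simply makes explicit the choice of a fixed factorization of $\psi$ and the resulting constant $C_{k_0,3}$, which is precisely the intended argument.
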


The proof of \Cref{thm:d-torelli} is similar to that of \Cref{prop:d-surgery}.  In particular, for a separating Dehn twist (which simply corresponds to surgery along a null-homologous knot) or bounding pair map, we consider the associated 2-handle cobordism and use this to constrain the $d$-invariants.  To handle the case of bounding pair maps, we will need the {\em twisted} $d$-invariant due to Behrens-Golla \cite{BehrensGolla}, an even more general notion of $d$-invariant  that works in the absence of standard $HF^\infty$. (See also the work of Levine-Ruberman \cite{LevineRuberman}.) For a torsion $\spinc$ structure $\mfs$ on a 3-manifold $Y$, they define an invariant $\dtwist(Y,\mfs) \in \mathbb{Q}$ (not to be confused with the Hendricks-Manolescu ``$d$-lower'') which satisfies the following properties similar to $d$ and $d_b$.

\begin{theorem}[Behrens-Golla \cite{BehrensGolla}]\label{thm:d-twist}  
Let $Y$ be a closed, connected, oriented 3-manifold and $\mfs$ a torsion $\spinc$ structure on $Y$.  
\begin{enumerate}
\item if $Y$ is a rational homology sphere, $\dtwist(Y,\mfs) = d(Y,\mfs)$;
\item $\dtwist(Y \# Y', \mfs \# \mfs') = \dtwist(Y,\mfs) + d(Y',\mfs')$ for any $\spinc$ rational homology sphere $Y'$;
\item\label{eq:d-twist-semidef} if $(W, \mft)$ is a negative semidefinite Spin$^c$ 4-manifold with boundary $(Y,\mfs)$, then 
$$ \dtwist(Y,\mfs) \geq \frac{c^2_1(\mft) + b_2^-(W) - 2b_1(Y)}{4}.$$
\end{enumerate}  
\end{theorem}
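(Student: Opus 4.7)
The plan is to follow the Behrens--Golla construction, in which $\dtwist(Y,\mfs)$ is built from Heegaard Floer homology with twisted coefficients in a suitable Novikov-style completion $\mathcal R$ of $\Z[H^1(Y;\Z)]$. The key structural input is that, for any torsion $\spinc$ structure, the twisted $\underline{HF}^\infty(Y,\mfs;\mathcal R)$ is a free module of rank one over $\mathcal R[U,U^{-1}]$, regardless of whether the untwisted $HF^\infty$ has the standard form. Accepting this, one defines $\dtwist(Y,\mfs)$ to be the minimal grading of an element in the image of $\underline{HF}^\infty(Y,\mfs;\mathcal R) \to \underline{HF}^+(Y,\mfs;\mathcal R)$ that lies in the $U$-tower. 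Each of (1)--(3) is then obtained from a corresponding property of the twisted theory.

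For (1), when $Y$ is a rational homology sphere, $H^1(Y;\Z)$ is torsion, so $\mathcal R$ is (up to completion) the trivial coefficient system. The twisted and untwisted Heegaard Floer chain complexes are therefore canonically identified as graded $\F[U]$-modules, and the definition of $\dtwist$ specializes to that of $d$. For (2), I would use the Künneth formula for Heegaard Floer homology with twisted coefficients of Ozsv\'ath--Szab\'o: since $Y'$ is a rational homology sphere, $H^1(Y';\Z)$ is torsion, so the twisted coefficients on the $Y'$ factor reduce to untwisted ones, and the Künneth isomorphism identifies $\underline{HF}^\infty(Y\#Y',\mfs\#\mfs';\mathcal R) \cong \underline{HF}^\infty(Y,\mfs;\mathcal R) \otimes_{\F} HF^\infty(Y',\mfs')$ as graded modules. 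Tracking the grading of the bottom of the $U$-tower through this isomorphism gives $\dtwist(Y\#Y',\mfs\#\mfs') = \dtwist(Y,\mfs) + d(Y',\mfs')$.

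The main step is (3), and this is where I expect the real work to be. Puncture $W$ to view it as a cobordism $W^\circ : S^3 \to Y$, and consider the induced cobordism map
\[
\underline{F}^\infty_{W^\circ,\mft} : HF^\infty(S^3) \longrightarrow \underline{HF}^\infty(Y,\mfs;\mathcal R),
\]
which shifts absolute $\mathbb Q$-grading by $\tfrac{c_1(\mft)^2 - 2\chi(W^\circ) - 3\sigma(W^\circ)}{4}$. The crucial claim, and the main obstacle, is that this map is nonzero on the $U$-tower in the twisted setting, under only the negative semidefinite hypothesis (i.e.\ without assuming standard $HF^\infty$). Granting nontriviality, the image realizes the grading
\[
d(S^3) + \frac{c_1(\mft)^2 - 2\chi(W^\circ) - 3\sigma(W^\circ)}{4},
\]
and comparing with $\dtwist(Y,\mfs)$, using $\chi(W^\circ) = b_2(W)$, $\sigma(W^\circ) = -b_2^-(W)$ (since $W$ is negative semidefinite), and the $2b_1(Y)$ correction coming from the shift between the minimal $U$-tower grading and the relative grading of the twisted complex, yields the displayed inequality.

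To establish the nontriviality in (3) I would argue as follows: decompose $W^\circ$ into handles and track the map on twisted $HF^\infty$ through each handle. One-handles and three-handles induce isomorphisms up to tensoring with $\underline{HF}^\infty$ of $S^1\times S^2$, which behaves well under the twisted Novikov coefficients; two-handle maps, which are the only ones that can fail to be injective, are controlled using the surgery exact triangle in twisted coefficients together with the semidefiniteness hypothesis (which rules out the ``bad'' positive eigenspaces that would kill the $U$-tower). Assembling the handle-by-handle analysis produces a nontrivial element in the expected grading, completing the proof.
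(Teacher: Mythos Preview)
The paper does not contain a proof of this statement: Theorem~\ref{thm:d-twist} is quoted verbatim from Behrens--Golla and used as a black box in the proof of Theorem~\ref{thm:d-torelli}. There is therefore no ``paper's own proof'' to compare your proposal against.

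That said, a brief comment on your sketch. Parts (1) and (2) are essentially formal, and your reasoning for them is correct: for a rational homology sphere $H^1(Y;\Z)=0$, so the Novikov ring degenerates and twisted equals untwisted; the K\"unneth argument for (2) is the standard one. For (3), your outline (puncture $W$, use the cobordism map on twisted $HF^\infty$, compute the grading shift, and establish nontriviality via a handle decomposition and the twisted surgery triangle) is indeed the shape of the Behrens--Golla argument. The one place where your sketch is vague is the ``$2b_1(Y)$ correction coming from the shift between the minimal $U$-tower grading and the relative grading of the twisted complex'': in the actual argument this term does not arise from any grading shift of the cobordism map, but rather from the fact that the rank-one twisted $\underline{HF}^\infty$ tower sits inside a larger module, and the comparison between the bottom of the tower and the image of the cobordism map involves the structure of $\underline{HF}^-$ for $\#^{b_1} S^1\times S^2$ (or equivalently the $1$-handles in the decomposition). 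You should be more precise about exactly where the $-2b_1(Y)/4$ enters if you want this to be a proof rather than a plausibility argument.
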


\begin{proof}[Proof of \Cref{thm:d-torelli}]
First, suppose that $\phi$ is a single positive (respectively negative) Dehn twist along a curve $\alpha$ which separates $\Sigma_g$ into two subsurfaces $S_1,S_2$ each bounded by $\alpha$. Then, the image of $\alpha$ in $Y$ is a null-homologous knot $K$ of genus at most $g$ and $Y_\phi$ is obtained by $-1$- (respectively $+1$-) Dehn surgery along $K$. We obtain a bound $|d(Y) - d(Y_\phi)| \leq C_{g,1}$ from \Cref{prop:d-surgery}, where $C_{g,1}$ is independent of the sign of the twist or particular separating curve (but does depend on $g$). Next, suppose that $\phi$ is a composition of Dehn twists along separating curves $\alpha_1,\ldots,\alpha_k$. Then, we can view $Y_\phi$ as a sequence of $\pm 1$-surgeries along the images of $\alpha_1,\ldots,\alpha_k$ in $Y$ \cite[pp. 539-540]{lickorish}.  Repeated application of \Cref{prop:d-surgery} shows that $|d(Y) - d(Y_\phi)| \leq k C_{g,1}$ for some constant that depends only on $g$.

To finish the proof it therefore suffices to show that there is a constant $A_g$ such that if $\phi$ is a bounding pair map, then $|d(Y) - d(Y_\phi)| \leq A_g$, where again $A_g$ is independent of $Y$.  Let $\alpha_\pm$ be two curves in $\Sigma_g$ which cobound a subsurface of $\Sigma_g$. Assume that $\phi$ is the composition of a positive Dehn twist along $\alpha_+$ and a negative Dehn twist along $\alpha_-$. Let $K_\pm$ be the (oriented) images of $\alpha_\pm$ in $Y$ and let $F$ be a subsurface of $S$ in $Y$ which $K_+$ and $K_-$ cobound.  In this case, $Y_\phi$ is obtained from $Y$ by doing surgery on the link $K_+ \cup K_-$.  Let $W$ be the 2-handle cobordism from $Y$ to $Y_\phi$.  Unfortunately, we cannot use $S$ to constrain the genera of either $K_+$ or $K_-$ like in the separating case, because $\alpha_\pm$ are assumed to be non-separating in $S$. Nevertheless, we can find a primitive, homologically essential, square zero, closed, and oriented surface in $W$ obtained by the union of $F$ and one core disk for each of the 2-handles, with orientations determined by those of $K_\pm$.  Call this surface $\widehat{F}$.  Note that it has the same genus as $F$, which is at most $g$. 

We now argue that $\widehat{F}$ has square zero in $W$.  First, observe that an $F$-framed pushoff of $K_+$ is homologous to $K_-$ in the exterior of $K_+ \cup K_-$ and vice versa. It follows that 
\[ lk(K_+, K_-)= -\lk(K_+,  K_+^F)\]
where $K_+^F$ refers to a push-off of $K_+$ in $F$. (The seemingly opposite signs in the above formula come from the fact that $K_+ \cup -K_-$ bounds.)

Cutting and regluing by the positive Dehn twist along $\alpha_+$ corresponds to $(-1)$-surgery on $\alpha_+$, where these surgery coefficients are relative to the framings induced by $F$, not the Seifert framing \cite[pp. 539-540]{lickorish}. More precisely, the surgery framing curve is homologous to $-\mu_{K_+}+K^F_{+}$. A similar argument shows the negative Dehn twist along $\alpha_-$ corresponds to $(+1)$-surgery along $\alpha_-$,  again relative to the framing induced by $F$. Therefore, relative to the Seifert framings, the linking matrix for the surgery on $K_+ \cup K_-$ is given by 
\[
\begin{pmatrix} -\ell - 1 & \ell \\ \ell & -\ell + 1 \end{pmatrix}
\]
where $\ell = lk(K_+,K_-)$.  This describes a natural basis for $H_2(W)$, such that $\widehat{F}$ corresponds to $(1,1)$, and we see that $\widehat{F}$ is square zero.  

As in the proof of \Cref{prop:d-surgery}, we break our cobordism $W$ into two pieces.  Let $Z$ be the union of collar neighborhoods of $Y_\phi$ and $Y$,  a neighborhood of $\widehat{F}$,  and neighborhoods of arcs running from $Y_\phi$ and $Y$ to $\widehat{F}$; and let $X:= W\smallsetminus Z$. 
If there existed a surface $\Sigma$ in $X$ with nonzero self-intersection,  then $\Sigma,  \widehat{F}$ would be linearly independent and hence form a basis for the rational second homology of $W$.  With respect to this basis, the intersection form would be given by $\begin{pmatrix} * & 0 \\ 0 & 0 \end{pmatrix}$,  which would contradict the nondegeneracy of the intersection form on $W$,  a 4-manifold with integer homology sphere boundary components.  Therefore,   $X$ has vanishing intersection form and in particular is a negative semidefinite 4-manifold with boundary $-Y \# Y_\phi \# \widehat{F} \times S^1$. We would hope for an inequality of the $d$-invariants, as with null-homologous surgeries. The problem is that  $\widehat{F} \times S^1$ does {\em not} have standard $HF^\infty$ when the genus of $\widehat{F}$ is at least 1, so we must use the twisted correction terms.  

We would like to invoke \Cref{thm:d-twist}\eqref{eq:d-twist-semidef}.  Note that the homology spheres $Y$ and $Y_\phi$ have unique $\spinc$ structures and $\widehat{F} \times S^1$ has a unique torsion $\spinc$ structure since $H^2(\widehat{F} \times S^1)$ is torsion-free.  Thus, to utilize \Cref{thm:d-twist}\eqref{eq:d-twist-semidef}, we just need to show that there exists a $\spinc$ structure on $X$ that restricts to be torsion on $\widehat{F} \times S^1$. By \Cref{lem:tors-spinc}, it suffices to construct a $\spinc$ structure on $X$ whose first Chern class evaluates to be trivial on $\widehat{F}$.  We will construct our $\spinc$ structure on $W$ and then restrict to $X$.  (Note that $\spinc$ structures on a closed 3-manifold are the same as on once-punctured 3-manifolds, so this restriction will not cause any issues.)  

If $\ell$ is odd, then $W$ has even intersection form, and so we can choose the unique $\spinc$ structure $\mft$ on $W$ which satisfies $c_1(\mft) = 0$; this necessarily restricts to $c_1 = 0$ on $\widehat{F} \times S^1$, which implies that the restriction of $\mft$ to $X$ is torsion. The case that $\ell$ is even goes as follows. A $\spinc$ structure on $\widehat{F} \times S^1$ is torsion if and only if $c_1$ pairs trivially with $\widehat{F}$.  Let $a \in H^2(W) = H^2(W,\partial W)$ be the element which pairs to $\pm 1$ on $T_\pm$ where $T_\pm$ are generators of $H_2(W)$ obtained by capping off Seifert surfaces for $K_\pm$ in $Y$ with the cores of the corresponding 2-handle.  One can quickly check that $a$ is characteristic, and therefore $a=c_1(\mft)$ for some $\spinc$ structure $\mft$ on $W$. Further,  $a= c_1(\mft)$ pairs to zero with $\widehat{F}$, since $[\widehat{F}] = T_+ + T_-$.  In conclusion, independent of $\ell$, there is a $\spinc$ structure on $W$ (and hence $X$) that restricts to be torsion on $\widehat{F} \times S^1$.    

By the above discussion, \Cref{thm:d-twist} \eqref{eq:d-twist-semidef} gives the bound
\[
\dtwist(-Y \# Y_\phi \# \widehat{F} \times S^1) \geq \frac{-2b_1(\partial X)}{4} = -\frac{2g+1}{2}, 
\]
where we do not write any Spin$^c$ structures because each summand has a unique torsion one.  (There is no $c_1^2$ or $b_2^-$ term because the intersection form on $X$ vanishes.) We thus obtain a lower bound for $\dtwist(-Y \# Y_\phi \# \widehat{F} \times S^1)$ that depends only on the genus of $F$.  By \Cref{thm:d-twist}, 
\[
\dtwist(-Y \# Y_\phi \# \widehat{F} \times S^1) = -d(Y) + d(Y_\phi) + \dtwist(\widehat{F} \times S^1).  
\]
Therefore, we get $d(Y) - d(Y_\phi) \leq C$, where $C$ depends only on the genus of the subsurface $F$.  Since the genus of $F$ is at most $g$, we obtain a lower bound independent of the subsurface.  By reversing orientations and repeating the process, we gain the opposite inequality $d(Y_\phi) - d(Y) \leq C$. This completes the proof.  
\end{proof}

\begin{remark}
The proof shows something slightly stronger than \Cref{thm:d-torelli}.  Namely, the constant can be refined further if one knows the genera of the subsurfaces involved in the factorization into separating Dehn twists and bounding pair maps.
\end{remark}

\section{Proofs of the main theorems}\label{sec:proof-main}

\subsection{The proof of \Cref{thm:mainnullhom}.}\label{sec:proofs}

We are now ready to prove the following result, which will quickly imply~\Cref{thm:mainnullhom}.

\begin{proposition}~\label{prop:boundingd}
Let $P$ be a pattern in the solid torus with winding number $w$. Suppose that there exists a prime power $q$ dividing $w$ such that, if $\eta_1, \dots, \eta_q$ denote the lifts of $\eta$ to $\Sigma_q(P(U)$, we have 
\begin{enumerate}
\item  $\lk(\eta_i,\eta_j) \geq 0$ for all $i \neq j$, but is not identically zero, and 
\item   $\eta_1,\ldots, \eta_q$ are null-homologous in $\Sigma_q(P(U))$. 
\end{enumerate}
Then there exists a constant $C_{P,q}$ depending only on $P$ and $q$ such that for any knot $K$, 
\[ d_{\max}(\Sigma_q(P(K)) \leq d(\Sigma_2(C_{2,1}(K)) + C_{P,q}.\]
\end{proposition}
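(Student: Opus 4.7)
Following the strategy of \Cref{sec:proofoutline}, we build a chain of 3-manifolds connecting $\Sigma_q(P(K)) \# \#_{i=1}^k L(m_i, 1)$ to $\Sigma_2(C_{2,1}(K))$ and bound the change in $d_{\max}$ at each step. Since the hypotheses on the $\eta_i$ match those of \Cref{prop:reduce-linking} (with the framing condition supplied by \Cref{lem:linkingframing}), the following chain of modifications exists: \Cref{prop:qhs3-to-s3} gives a framed link $L_0 \subset S^3$ with the same linking-framing data as $\bigcup_i \eta_i$, integers $m_1, \ldots, m_k$, and null-homologous surgeries from $(S^3, L_0)$ to $(\Sigma_q(P(U)) \#_{i=1}^k L(m_i, 1), \bigcup_i \eta_i)$; \Cref{prop:reduce-linking} gives elementary negative-definite cobordisms from $(S^3, L_0)$ to some $(S^3, L_1)$; and \Cref{prop:make-standard} gives a clasper surgery from $(S^3, L_1)$ to $(S^3, H_q)$. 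Applying the $*E_K$ construction to each step and using \Cref{exl:dbc-c21}, we obtain a chain
\[
\Sigma_q(P(K)) \# \#_{i=1}^k L(m_i,1) \xleftarrow{\text{null-hom.}} (S^3, L_0) * E_K \xrightarrow{\text{neg.\ def.}} (S^3, L_1) * E_K \xrightarrow{\text{clasper}} \Sigma_2(C_{2,1}(K))
\]
in which each of the three intermediate manifolds is an integer homology sphere by the remark following \Cref{exl:simplestar}.

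We now bound the variation of $d_{\max}$ at each step. For the null-homologous surgeries, the curves used in the proof of \Cref{prop:qhs3-to-s3} are null-homologous in $S^3 \smallsetminus \nu(L_0)$ with Seifert surfaces disjoint from $\nu(L_0)$, so their genera and framings depend only on $P$ and $q$; moreover these Seifert surfaces survive inside $(S^3, L_0) * E_K$ since the $E_K$ summands only replace $\nu(L_0)$. Iterating \Cref{prop:d-surgery} — starting from a $\spinc$ structure on $\Sigma_q(P(K)) \# \#_{i=1}^k L(m_i,1)$ realizing $d_{\max}$ and propagating cobordant $\spinc$ structures along each surgery cobordism — yields
\[
d_{\max}(\Sigma_q(P(K)) \# \#_{i=1}^k L(m_i,1)) \leq d((S^3, L_0) * E_K) + C_1.
\]
For the negative-definite cobordism, the $(-1)$-framed unknots $\mathcal{L}$ of \Cref{prop:reduce-linking} lie in $S^3 \smallsetminus \nu(L_0)$ and bound disks there. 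A short linking calculation (using that $*E_K$ identifies meridians of $L_0$ with $\lambda_K$, which bound Seifert surfaces of $K$ inside each $E_K$) shows that these unknots retain both their $S^3$-Seifert framings and their zero pairwise linking numbers in $(S^3, L_0) * E_K$; thus the $*E_K$-extension of the cobordism is still negative definite with the same diagonal intersection form, signature, and Euler characteristic. Applying \Cref{thm:d-properties}\eqref{d:neg-def} with a $\spinc$ structure optimizing $c_1^2$ over extensions of the unique $\spinc$ structures on the integer homology sphere boundaries gives $d((S^3, L_0) * E_K) \leq d((S^3, L_1) * E_K) + C_2$. For the clasper surgery, both sides are integer homology spheres, so \Cref{cor:d-clasper} immediately yields $|d((S^3, L_1) * E_K) - d(\Sigma_2(C_{2,1}(K)))| \leq C_3$ for an absolute constant $C_3$.

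Concatenating the three inequalities and invoking additivity of $d_{\max}$ under connected sum (so $d_{\max}(\Sigma_q(P(K)) \# \#_{i=1}^k L(m_i,1)) = d_{\max}(\Sigma_q(P(K))) + \sum_i d_{\max}(L(m_i,1))$) yields
\[
d_{\max}(\Sigma_q(P(K))) \leq d(\Sigma_2(C_{2,1}(K))) + C_1 + C_2 + C_3 - \sum_i d_{\max}(L(m_i,1)),
\]
which is the desired bound with $C_{P,q} := C_1 + C_2 + C_3 - \sum_i d_{\max}(L(m_i,1))$. The main technical point demanding care is the stability, under $*E_K$, of the topological data (Seifert surface genera, Seifert framings, and intersection forms) arising from the constructions of \Cref{sec:topology}; this stability follows because all the relevant modifications take place entirely in the complement of $\nu(L_0) \subset S^3$, a region untouched by the $*E_K$ operation.
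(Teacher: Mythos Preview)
Your proposal is correct and follows essentially the same four-step strategy as the paper: null-homologous surgeries (\Cref{prop:qhs3-to-s3} + \Cref{prop:d-surgery}), elementary negative-definite cobordisms (\Cref{prop:reduce-linking} + \Cref{thm:d-properties}\eqref{d:neg-def}), clasper surgeries (\Cref{prop:make-standard} + \Cref{cor:d-clasper}), and the identification of $(S^3,H_q)*E_K$ with $\Sigma_2(C_{2,1}(K))$.

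Two small imprecisions are worth tightening. First, the curves $\mathcal{L}$ from \Cref{prop:reduce-linking} do \emph{not} bound disks in $S^3\smallsetminus\nu(L_0)$ (each component links two components of $L_0$ nontrivially); the paper instead observes that the cobordism is obtained from the original one by excising $I\times\nu(L_0)$ and gluing in $I\times\bigcup E_K$, which preserves the intersection form since $E_K$ is a homology solid torus --- your Seifert-surface capping argument also works, but the preceding ``bound disks there'' clause should be deleted. Second, \Cref{exl:dbc-c21} only gives $(S^3,H)*E_K\cong\Sigma_2(C_{2,1}(K))$; to get $(S^3,H_q)*E_K\cong\Sigma_2(C_{2,1}(K))$ you also need \Cref{exl:simplestar} with $n=0$ to see that each $0$-framed unknot component contributes an $S^3$ summand. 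Finally, the clasper constant $C_3$ is not absolute: it is $M$ times the constant of \Cref{cor:d-clasper}, where $M$ (the number of clasper moves) depends on $P$ and $q$.
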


\begin{proof}

\textit{Step 1: Converting $(\Sigma_q(P(U)), \cup_{i=1}^q \eta_i)$ to $(S^3,L)$ via null-homologous surgery.} Applying \Cref{prop:qhs3-to-s3} to $(\Sigma_q(P(U)),  \cup_{i=1}^q \eta_i)$ tells us that there exists (i) a framed link $L$ in $S^3$ with the same linking-framing information as $\cup_{i=1}^q \eta_i$, (ii) natural numbers $n$,  and (iii) a sequence $m_1, \dots,m_n$ of natural numbers such that $(S^3,L)$ can be transformed into  $(\Sigma_q(P(U)) \#_{i=1}^n L(m_i,1),  \cup_{i=1}^q \eta_i)$ via a  finite sequence of null-homologous integral surgeries along a link $\Gamma = \gamma_1 \cup \ldots \cup \gamma_k$.  For convenience in notation,  let $Y:= \#_{i=1}^n L(m_i,1)$ and define
\begin{align}\label{eqn:c0bound}
 c_0:= \max \left\{ \left| d \left( Y,  \mfs\right)\right|: \mfs \in \text{Spin}^c \left(Y \right) \right\}. 
 \end{align}
Since $\Gamma$ is disjoint from $L$, we can consider each component $\gamma$ of $\Gamma$ as a knot 
\[ \gamma \subset S^3 \smallsetminus \nu(L) \subset(S^3, L)*E_K.\]  Moreover,  since we are considering a sequence of null-homologous surgeries in $S^3 \smallsetminus L$,  each knot $\gamma_{i+1}$ bounds a surface in the result of surgery on $\gamma_1, \dots, \gamma_i$ in $(S^3, L)*E_K$ whose genus is bounded above independently of $K$ (i.e., only in terms of $P$ and $q$).  (See the discussion following \Cref{prop:qhs3-to-s3} for what is meant by a sequence of nullhomologous surgeries.)  Finally, surgery on $(S^3, L)*E_K$ along $\gamma_1, \dots, \gamma_k$ using the same coefficients as before transforms $(S^3,L)*E_K$ to
\begin{align*}
 (\Sigma_q(P(U)) \# Y,  \cup_{i=1}^q \eta_i) *E_K
 &=
\left[(\Sigma_q(P(U)),  \cup_{i=1}^q \eta_i)*E_K \right]  \# Y\\
&= \Sigma_q(P(K)) \# Y. 
\end{align*}

Let $W_K$ denote the 4-manifold obtained from attaching $2$-handles to $[0,1] \times \left((S^3, L)*E_K\right)$ along each $\{1\} \times \gamma$ for each component $\gamma$ of $\Gamma$ with the surgery framings,  and note that $W_K$ is the surgery 2-handle cobordism from $(S^3, L)*E_K$ to $\Sigma_q(P(K)) \# Y$. Let $\mfs_\Sigma$ denote a Spin$^c$ structure on $\Sigma_q(P(K))$ with maximal corresponding $d$-invariant, i.e.  such that 
\begin{align}\label{eqn:dmax}
 d(\Sigma_q(P(K)), \mfs_\Sigma)=d_{\max}(\Sigma_q(P(K)).\end{align}
Choose any Spin$^c$-structure $\mfs_Y$ on $Y$. As noted before~\Cref{prop:d-surgery},   there exists a Spin$^c$ structure $\mathfrak{t}$ on $W_K$ that restricts to $\mfs_\Sigma \# \mfs_Y$ on $\Sigma_q(P(K)) \# Y. $ Since $(S^3, L)*E_K$ is an integer homology sphere,  $\mathfrak{t}$ restricts to the unique Spin$^c$ structure on $(S^3, L)*E_K$. 

By repeated application of~\Cref{prop:d-surgery} there exists a constant $c_1$ depending only on the framing of the surgeries and the genera of the knots $\gamma_1, \dots, \gamma_k$ (in particular,  independent of $K$) such that 
\begin{align}\label{eqn:c1bound}
|d((S^3, L)*E_K) - d(\Sigma_q(P(K)) \# Y, \mfs_\Sigma\#\mfs_Y)| \leq c_1.
\end{align}
We can now use the additivity of $d$-invariants under connected sum \Cref{thm:d-properties}.\ref{d:sum}, together with \eqref{eqn:c0bound},  \eqref{eqn:dmax}, and \eqref{eqn:c1bound} above to obtain that
\begin{align}\label{eqn:bigbound1}
|d((S^3, L)*E_K) - d_{\max}(\Sigma_q(P(K)))| 
\leq  |d( Y,  \mfs_Y)|+c_1 \leq c_0+c_1. 
\end{align}

\textit{Step 2: Reducing the linking of $L$ via negative-definite cobordisms. }
By assumption, $\lk(\eta_i,\eta_j) \geq 0$ for all $i \neq j$ and $\lk(\eta_i,\eta_j)>0$ for some $i \neq j$.  By \Cref{lem:linkingframing},  we know that $\text{fr}(\eta_i)=-\sum_{j \neq i} \lk(\eta_i, \eta_j)$ for all $i$.  
Since $(S^3, L)$ is obtained from $(\Sigma_q(P(U)), \cup_{i=1}^q \eta_i)$ by null-homologous surgeries, which do not change linking-framing information,  the link $L$ also satisfies the hypotheses of \Cref{prop:reduce-linking}. 
Therefore,  the conclusion holds: there exists a negative definite cobordism $W$ from $(S^3, L)$ to $(S^3, L')$,  where $L'$ is a $q$-component link with the same linking-framing information as $H_q$.  Moreover, this cobordism is of a particularly simple type, consisting of attaching 2-handles to $I\times S^3$ along curves which are disjoint from $L$.  It follows that for any knot $K$,  by cutting out $I\times\nu(L)$ and gluing in $I\times \cup_{i=1}^q E_K$,  we obtain a negative definite cobordism from $(S^3, L)*E_K$ to $(S^3, L')*E_K$.  
By the behavior of $d$-invariants under negative-definite cobordisms (\Cref{thm:d-properties}.\ref{d:neg-def}) we therefore have that for any knot $K$, 
\begin{align}\label{eqn:c2bound}
d((S^3, L)*E_K) \leq d((S^3, L')*E_K).
\end{align}

\textit{Step 3: Converting $L'$ to $H_q$ via clasper moves. }
We now apply \Cref{prop:make-standard} to $(S^3, L')$ to observe that there exists some length $M$ sequence of clasper surgeries transforming $(S^3, L')$ into $(S^3, H_q)$,  where we remind the reader that $H_q$ is the split union of a $-1$-framed positive Hopf link $H$ with a $(q-2)$-component 0-framed unlink. 
 For any knot $K$,  this induces a length $M$ sequence of clasper surgeries transforming $(S^3,L')*E_K$ into $(S^3, H_q)* E_K$.  By repeated application of~\Cref{cor:d-clasper},  there exists a constant $c_2$ depending only on $M$,  and in particular independent of $K$,  such that for any $K$ we have 
\begin{align}~\label{eqn:c3bound}
|d((S^3, L')*E_K)- d((S^3, H_q)*E_K)| \leq c_2.
\end{align}

\textit{Step 4: Finishing the proof.}
By sequentially using \eqref{eqn:bigbound1},   \eqref{eqn:c2bound},  and \eqref{eqn:c3bound}, we therefore have that for any knot $K$
\begin{align*}
d_{\max}(\Sigma_q(P(K)))& \leq 
d((S^3, L)*E_K)+ c_0+c_1 \\
&\leq d((S^3,  L')*E_K)+c_0+c_1 \\
& \leq d((S^3, H_q)*E_K) +c_0+c_1+c_2.
\end{align*}

Since $H_q= H \cup U_{q-2}$ is a split link,  we have that 
\begin{align*} (S^3, H_q)*E_K& \cong \left[(S^3, H)*E_K\right] \# \left[(S^3, U_{q-2})* E_K \right]\\
&\cong \left[(S^3, H)*E_K\right] \#_{i=1}^{q-2} \left[(S^3, U)*E_K \right]\\
& \cong \Sigma_2(C_{2,1}(K)) \#_{i=1}^{q-2} S^3 \\
&\cong \Sigma_2(C_{2,1}(K)) 
\end{align*}
where the second-to-last equality follows from~\Cref{exl:dbc-c21} and~\Cref{exl:simplestar}. 
So $C_{P,q}:=c_0+c_1+c_2$ is our desired constant. 
\end{proof}

We are now ready to prove~\Cref{thm:mainnullhom},  which states that under the hypotheses of~\Cref{prop:boundingd} we have that $P$ does not induce a homomorphism.

\begin{proof}[Proof of~\Cref{thm:mainnullhom}]
Suppose for a contradiction that $P(T_{2,2k+1})\#P(-T_{2,2k+1})$ is slice for all $k$.  
This implies that the rational homology spheres 
\[Z_k:=\Sigma_q(P(T_{2,2k+1})\#P(-T_{2,2k+1}))= \Sigma_q(P(T_{2,2k+1}))\# \Sigma_q(P(-T_{2,2k+1}))\]
must all bound rational homology balls,  
and hence by \Cref{cor:boundingmeans0} we must have that $d_{\max}(Z_k) \geq 0$ for all $k$. Now, let $C:=C_{P,q}$ be as in \Cref{prop:boundingd},  and observe that 
\begin{align*}
	d_{\scriptscriptstyle\max}(Z_k)&=
	d_{\scriptscriptstyle\max}(\Sigma_q(P(T_{2,2k+1})))+ d_{\scriptscriptstyle\max}(\Sigma_q(P(-T_{2,2k+1}))\\
	& \leq d(\Sigma_2(C_{2,1}(T_{2,2k+1})))+ d(\Sigma_2(C_{2,1}(-T_{2,2k+1})) + 2C\\
	&=d(S^3_{1}(T_{2,2k+1} \# T_{2,2k+1}))+d(S^3_{1}(-T_{2,2k+1} \# -T_{2,2k+1}) + 2C,
\end{align*}
where for the last equality we use the well-known fact that for any knot $K$ one has $\Sigma_2(C_{2,1}(K))\cong S^3_{1}(K \# K^r)$ as was explained in ~\Cref{exl:dbc-c21}, and the fact that torus knots are reversible. 

We can now apply \Cref{thm:surgeryalt} to obtain that
\[	d_{\scriptscriptstyle\max}(Z_k) \leq -2k+2C,\]
which is strictly negative for sufficiently large $k$, providing the desired contradiction.
\end{proof}

\subsection{The case of non null-homologous lifts}~\label{sec:nonnullhomologous}
The arguments of the previous subsection do not generalize well to the setting where the lifts of $\eta$ to $\Sigma_q(P(U))$ represent non-trivial elements of first homology.  We therefore prove~\Cref{thm:maincomp} by working with composite patterns, relying on the elementary observation that if $P \circ R$ does not induce a homomorphism,  then at least one of $P$ and $R$ must also not induce a homomorphism.  

\begin{definition}
Given patterns $P \cup \eta^P$ and $Q \cup \eta^Q$,  the composite pattern $(P \circ Q) \cup \eta^{P \circ Q}$ is defined as  the image of $(P, \eta^Q)$ in  $S^3=E_Q \cup E_{\eta^P}$,
where the identification of $\partial E_Q$ with $\partial E_{\eta^P}$ glues $\lambda_Q$ to $\mu_{\eta^{P}}$ and $\mu_Q$ to $\lambda_{\eta^P}$.  
\end{definition}
See~\Cref{fig:composition} for an example of a composite pattern. 

\begin{figure}[h!]
	\centering
	\def\svgwidth{0.2\textwidth}
\begingroup%
  \makeatletter%
  \providecommand\color[2][]{%
    \errmessage{(Inkscape) Color is used for the text in Inkscape, but the package 'color.sty' is not loaded}%
    \renewcommand\color[2][]{}%
  }%
  \providecommand\transparent[1]{%
    \errmessage{(Inkscape) Transparency is used (non-zero) for the text in Inkscape, but the package 'transparent.sty' is not loaded}%
    \renewcommand\transparent[1]{}%
  }%
  \providecommand\rotatebox[2]{#2}%
  \newcommand*\fsize{\dimexpr\f@size pt\relax}%
  \newcommand*\lineheight[1]{\fontsize{\fsize}{#1\fsize}\selectfont}%
  \ifx\svgwidth\undefined%
    \setlength{\unitlength}{94.55942345bp}%
    \ifx\svgscale\undefined%
      \relax%
    \else%
      \setlength{\unitlength}{\unitlength * \real{\svgscale}}%
    \fi%
  \else%
    \setlength{\unitlength}{\svgwidth}%
  \fi%
  \global\let\svgwidth\undefined%
  \global\let\svgscale\undefined%
  \makeatother%
  \begin{picture}(1,1.45921229)%
    \lineheight{1}%
    \setlength\tabcolsep{0pt}%
    \put(0,0){\includegraphics[width=\unitlength,page=1]{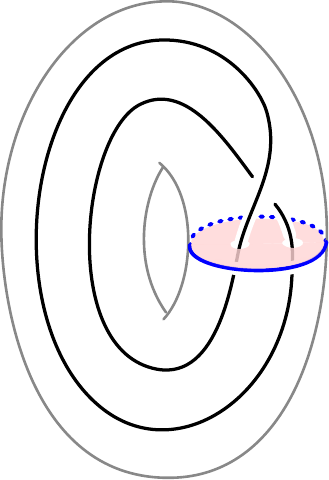}}%
    \put(0.99159061,0.66721701){\color[rgb]{0,0,1}\makebox(0,0)[lt]{\lineheight{1.25}\smash{\begin{tabular}[t]{l}$\eta^{C}$\end{tabular}}}}%
    \put(0.4487973,1.35910274){\color[rgb]{0,0,0}\makebox(0,0)[lt]{\lineheight{1.25}\smash{\begin{tabular}[t]{l}$C$\end{tabular}}}}%
  \end{picture}%
\endgroup%
\hspace{0.5in}
	\def\svgwidth{0.2\textwidth}
\begingroup%
  \makeatletter%
  \providecommand\color[2][]{%
    \errmessage{(Inkscape) Color is used for the text in Inkscape, but the package 'color.sty' is not loaded}%
    \renewcommand\color[2][]{}%
  }%
  \providecommand\transparent[1]{%
    \errmessage{(Inkscape) Transparency is used (non-zero) for the text in Inkscape, but the package 'transparent.sty' is not loaded}%
    \renewcommand\transparent[1]{}%
  }%
  \providecommand\rotatebox[2]{#2}%
  \newcommand*\fsize{\dimexpr\f@size pt\relax}%
  \newcommand*\lineheight[1]{\fontsize{\fsize}{#1\fsize}\selectfont}%
  \ifx\svgwidth\undefined%
    \setlength{\unitlength}{94.65717888bp}%
    \ifx\svgscale\undefined%
      \relax%
    \else%
      \setlength{\unitlength}{\unitlength * \real{\svgscale}}%
    \fi%
  \else%
    \setlength{\unitlength}{\svgwidth}%
  \fi%
  \global\let\svgwidth\undefined%
  \global\let\svgscale\undefined%
  \makeatother%
  \begin{picture}(1,1.45770338)%
    \lineheight{1}%
    \setlength\tabcolsep{0pt}%
    \put(0,0){\includegraphics[width=\unitlength,page=1]{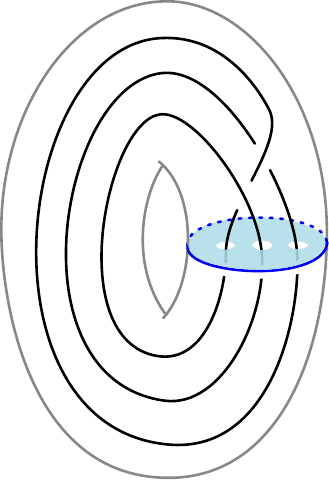}}%
    \put(0.99643767,0.68268188){\color[rgb]{0,0,1}\makebox(0,0)[lt]{\lineheight{1.25}\smash{\begin{tabular}[t]{l}$\eta^{A}$\end{tabular}}}}%
    \put(0.43571761,1.36105297){\color[rgb]{0,0,0}\makebox(0,0)[lt]{\lineheight{1.25}\smash{\begin{tabular}[t]{l}$A$\end{tabular}}}}%
  \end{picture}%
\endgroup%
\hspace{0.5in}
	\def\svgwidth{0.2\textwidth}
\begingroup%
  \makeatletter%
  \providecommand\color[2][]{%
    \errmessage{(Inkscape) Color is used for the text in Inkscape, but the package 'color.sty' is not loaded}%
    \renewcommand\color[2][]{}%
  }%
  \providecommand\transparent[1]{%
    \errmessage{(Inkscape) Transparency is used (non-zero) for the text in Inkscape, but the package 'transparent.sty' is not loaded}%
    \renewcommand\transparent[1]{}%
  }%
  \providecommand\rotatebox[2]{#2}%
  \newcommand*\fsize{\dimexpr\f@size pt\relax}%
  \newcommand*\lineheight[1]{\fontsize{\fsize}{#1\fsize}\selectfont}%
  \ifx\svgwidth\undefined%
    \setlength{\unitlength}{94.71481705bp}%
    \ifx\svgscale\undefined%
      \relax%
    \else%
      \setlength{\unitlength}{\unitlength * \real{\svgscale}}%
    \fi%
  \else%
    \setlength{\unitlength}{\svgwidth}%
  \fi%
  \global\let\svgwidth\undefined%
  \global\let\svgscale\undefined%
  \makeatother%
  \begin{picture}(1,1.4568157)%
    \lineheight{1}%
    \setlength\tabcolsep{0pt}%
    \put(0,0){\includegraphics[width=\unitlength,page=1]{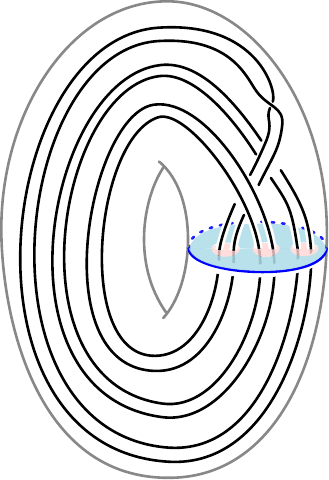}}%
    \put(0.44014245,1.38326542){\color[rgb]{0,0,0}\makebox(0,0)[lt]{\lineheight{1.25}\smash{\begin{tabular}[t]{l}$C\circ A$\end{tabular}}}}%
    \put(0.98996382,0.67667783){\color[rgb]{0,0,1}\makebox(0,0)[lt]{\lineheight{1.25}\smash{\begin{tabular}[t]{l}$\eta^{C\circ A}$\end{tabular}}}}%
  \end{picture}%
\endgroup%

	\caption{Left: the pattern $C\cup \eta^C$ that defines the $(2,1)$-cabling operation, Center: the pattern $A\cup \eta^A$ that defines the $3$-stranded alternating cabling, Right: the composition $\left(C\circ A\right) \cup \eta^{C\circ A}$.}\label{fig:composition}
\end{figure}

\begin{theorem}\label{prop:composition}
Let $P \cup \eta^{\scriptscriptstyle P}$ and $R \cup \eta^{\scriptscriptstyle R}$ be patterns and $n$ divide the winding number $w_R$ of $R$.
Then
\begin{enumerate}
\item $H_1(\Sigma_n((P \circ R)(U))) \cong H_1(\Sigma_n(P(U)))$
\item\label{comp-lambdas} Under this isomorphism, the homology classes $\left[\left(\lambda^{\eta^{\scriptscriptstyle P \circ R}}\right)_i\right]$ correspond to $w_R\left[\left(\lambda^{\eta^{\scriptscriptstyle P}}\right)_i\right]$ for all $i=1, \dots, n$. 
\item\label{comp-lk} $\lk_{\Sigma_n((P\circ R)(U))}(\lambda_i^{\eta^{\scriptscriptstyle P\circ R}}, \lambda_j^{\eta^{\scriptscriptstyle P\circ R}})= w_R^2 \lk_{\Sigma_n(P(U))}(\lambda_i^{\eta^{\scriptscriptstyle P}}, \lambda_j^{\eta^{\scriptscriptstyle P}})$ for all distinct $1 \leq i,  j \leq n$. 
\end{enumerate}
\end{theorem}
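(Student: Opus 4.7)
The plan is to exploit the identity $(P \circ R)(U) = P(R(U))$ together with the satellite decomposition in \Cref{eqn:cyclicofpk} applied with companion $K = R(U)$. This yields
\[
\Sigma_n((P\circ R)(U)) = \Bigl(\Sigma_n(P(U)) \setminus \bigcup_{i} \nu(\eta^P_i)\Bigr) \cup \bigcup_{i} E_{R(U)}^{(i)},
\]
where the $\eta^P_i$ are the lifts of $\eta^P$ to $\Sigma_n(P(U))$ and each copy $E_{R(U)}^{(i)}$ is glued so that $\mu_{R(U)}^{(i)} = \lambda_{\eta^P_i}$ and $\lambda_{R(U)}^{(i)} = \mu_{\eta^P_i}$.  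In other words, $\Sigma_n((P\circ R)(U))$ is obtained from $\Sigma_n(P(U))$ by removing tubular neighborhoods of the lifts of $\eta^P$ and replacing them with copies of the knot exterior $E_{R(U)}$.  The hypothesis $n \mid w_R$ will be used to guarantee that $\eta^{P\circ R} = \eta^R$ is null-homologous in the cover (since $[\eta^R] = w_R[\mu_{R(U)}]$) and hence lifts to $n$ disjoint copies, one inside each $E_{R(U)}^{(i)}$.

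To prove~(1), I would apply Mayer--Vietoris to both presentations.  The key fact is that both fillings --- $\nu(\eta^P_i) \cong S^1 \times D^2$ and $E_{R(U)}$ --- are $\mathbb{Z}$-homology solid tori with $H_1 \cong \mathbb{Z}$.  Moreover, the gluing along each boundary torus sends $\mu_{\eta^P_i}$ to a null-homologous curve in either filling (it bounds the meridional disk in $\nu(\eta^P_i)$, and equals the Seifert-nullhomologous $\lambda_{R(U)}$ in $E_{R(U)}$), while sending $\lambda_{\eta^P_i}$ to a generator (the core $\eta^P_i$ in the solid torus and $\mu_{R(U)}$ in the knot exterior).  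Hence the Mayer--Vietoris presentations for $H_1$ of the two branched covers are identified, giving the claimed group isomorphism.

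For~(2), I would compute that $(\lambda^{\eta^{P\circ R}})_i$ sits inside $E_{R(U)}^{(i)}$ and represents $w_R\,[\mu_{R(U)}^{(i)}]$ in $H_1(E_{R(U)}^{(i)}) \cong \mathbb{Z}$, since $\lk_{S^3}(\eta^R, R(U)) = w_R$.  Under the isomorphism from~(1), the generator $[\mu_{R(U)}^{(i)}]$ corresponds to $[\lambda_{\eta^P_i}] = [(\lambda^{\eta^P})_i]$, yielding the factor of $w_R$.  For~(3), I would replace the single curve $(\lambda^{\eta^{P\circ R}})_i$ inside $E_{R(U)}^{(i)}$ by $w_R$ disjoint parallel copies of $\mu_{R(U)}^{(i)}$, with which it is homologous in the filling, and then apply bilinearity of the rational linking form on the rational homology sphere $\Sigma_n((P\circ R)(U))$ to obtain
\[
\lk\bigl((\lambda^{\eta^{P\circ R}})_i,\,(\lambda^{\eta^{P\circ R}})_j\bigr) \;=\; w_R^2 \, \lk\bigl(\mu_{R(U)}^{(i)},\, \mu_{R(U)}^{(j)}\bigr).
\]
Since $\mu_{R(U)}^{(i)} = \lambda_{\eta^P_i}$ lives on the boundary torus $\partial\nu(\eta^P_i)$, which is shared between the two manifolds, one can choose a rational $2$-chain bounding a multiple of $\mu_{R(U)}^{(j)}$ entirely within the common region $\Sigma_n(P(U)) \setminus \bigcup_i \nu(\eta^P_i)$; its transverse intersection with $\mu_{R(U)}^{(i)}$ computes both $\lk\bigl(\mu_{R(U)}^{(i)}, \mu_{R(U)}^{(j)}\bigr)$ in $\Sigma_n((P\circ R)(U))$ and $\lk\bigl((\lambda^{\eta^P})_i, (\lambda^{\eta^P})_j\bigr)$ in $\Sigma_n(P(U))$.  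The main obstacle will be executing this last rational-Seifert-chain argument rigorously, establishing that such a $2$-chain exists and that its intersection count is genuinely independent of which filling (solid torus or knot exterior) one uses.
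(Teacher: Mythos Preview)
Your treatment of parts~(1), (2), and the first half of~(3) (extracting the factor $w_R^2$ by replacing each $\lambda^{\eta^{P\circ R}}_i$ with $w_R$ copies of $\mu_{R(U)}^{(i)}$) is essentially the paper's argument.

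The divergence is in the last step of~(3), where you must show
\[
\lk_{\Sigma_n((P\circ R)(U))}\bigl(\mu_{R(U)}^{(i)},\mu_{R(U)}^{(j)}\bigr)
\;=\;
\lk_{\Sigma_n(P(U))}\bigl(\lambda_{\eta^P_i},\lambda_{\eta^P_j}\bigr).
\]
You propose to find a rational $2$-chain for a multiple of $\lambda_{\eta^P_j}$ lying \emph{entirely in the common region} $\Sigma_n(P(U))\setminus\bigcup_i\nu(\eta^P_i)$. That claim is false in general: for $P=C_{2,1}$ and $n=2$, the common region is the Hopf link exterior $T^2\times I$, where $\lambda_{\eta^P_1}$ represents $-\mu_1+\mu_2\neq 0$ in $H_1$, so no multiple bounds there. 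Your idea is salvageable, though: a rational $2$-chain for $m\lambda_{\eta^P_j}$ in $\Sigma_n(P(U))$ meets each $\nu(\eta^P_k)$ in pieces whose boundary on $\partial\nu(\eta^P_k)$ is a multiple of $\mu_{\eta^P_k}$; since the gluing sends $\mu_{\eta^P_k}$ to $\lambda_{R(U)}$, those pieces can be swapped for Seifert surfaces inside $E_{R(U)}^{(k)}$, giving a $2$-chain in $\Sigma_n((P\circ R)(U))$ that agrees on the common region. The intersection with $\lambda_{\eta^P_i}$, pushed slightly into that region, is then manifestly unchanged.

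The paper avoids this chain-surgery bookkeeping altogether by a different device: it chooses an unknotting sequence for $R(U)$, i.e.\ crossing-change arcs $c_1,\dots,c_M$, each bounding a disk meeting $R(U)$ algebraically zero. Lifting these to the $n$ copies of $E_{R(U)}$ yields a link $\Gamma\subset\Sigma_n((P\circ R)(U))$ whose components are null-homologous in the link complement, and surgery along $\Gamma$ carries $(\Sigma_n((P\circ R)(U)),\,\cup_i\mu_{R(U)}^{(i)})$ to $(\Sigma_n(P(U)),\,\cup_i\mu_U^{(i)})$. Since null-homologous surgery preserves rational linking, the equality follows. This route is slicker because it packages the ``replace knot exterior by solid torus'' step as a sequence of operations already known to preserve linking, whereas your approach requires explicitly verifying that the $2$-chain can be transported across the filling swap.
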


We emphasize that in the statement of~\Cref{prop:composition},  we are not assuming that any of the lifts of $\eta_P$, $\eta_R$, or $\eta_{P \circ R}$ are null-homologous, and so the linking numbers are rationally rather than integrally valued. 

\begin{proof} Recall that \Cref{eqn:cyclicofpk} gives decompositions \begin{equation}\label{coverQP}\Sigma_n((P \circ R)(U))=\left(\Sigma_n((P \circ R)(U))\smallsetminus \bigcup_{i=1}^n \nu(\eta^{\scriptscriptstyle P\circ R}_i) \right)
\cup \bigcup_{i=1}^n E_U^{(i)}, \end{equation} and \begin{equation}\label{coverP}\Sigma_n(P(R(U)))=\left(\Sigma_n(P(U))\smallsetminus \bigcup_{i=1}^n \nu(\eta^{\scriptscriptstyle P}_i) \right)
\cup \bigcup_{i=1}^n \left(S^3 \smallsetminus \nu(R\cup \eta^{\scriptscriptstyle R}) \cup E_U\right)^{(i)},\end{equation} 
where the second identification relies on the fact that $n$ divides $w_P$ and the fact that $ \left(S^3 \smallsetminus \nu(R\cup \eta^{\scriptscriptstyle R}) \cup E_U\right)= E_{R(U)}$ as in \Cref{eqn:extofpk}. The first claim follows after observing that $E_U$ and $E_{R(U)}$ have the same homology. Notice that $\lambda^{\eta^{\scriptscriptstyle P\circ R}}_i$ is isotopic to $\lambda^{\eta^{\scriptscriptstyle R}}_i$ in $\Sigma_n((P \circ R)(U))$, since in \Cref{coverQP} $\lambda^{\eta^{\scriptscriptstyle P\circ R}}_i$ is identified with $\mu^{(i)}_{U}$, and in \Cref{coverP} $\mu^{(i)}_{U}$ is identified with $\lambda^{\eta^{\scriptscriptstyle R}}_i$. 

Next, let $D$ be a meridional disk of the solid torus $S^3\setminus \nu(\eta^{\scriptscriptstyle R})$ that intersects $R$ transversely, and for each point $x\in R\cap D$ denote by $N_x$ a disk neighborhood of $x$ in $R\cap D$. The unknotted circles $\partial N_x$ can be divided into two sets $(R\cap D)^{\pm}$ according to whether the orientation agrees with that of $\mu_R$ or not. By the definition of winding number, there is a pairing between the elements of $(R\cap D)^{-}$ and all but $w_R$ elements of $(R\cap D)^{+}$ via tubes contained in $S^3\setminus \nu(\eta^{\scriptscriptstyle R}\cup R)$. If $S$ is the surface obtained as the union of $D\setminus \cup_{x\in R\cap D}N_x$ and large enough tubes, then $S$ is contained in $S^3 \smallsetminus \nu(R\cup \eta^{\scriptscriptstyle R})$ and it's boundary $\partial S$ consists of the disjoint union of $\partial D=\lambda^{\eta^{\scriptscriptstyle R}}$ and $w_R$ copies of negatively oriented meridians of $R$. Since $S^3 \smallsetminus \nu(R\cup \eta^{\scriptscriptstyle R})\subset E_{R(U)}=\left(S^3 \smallsetminus \nu(R\cup \eta^{\scriptscriptstyle R}) \cup E_U\right)$, then for each $i=1,\ldots,n$ there is a copy $S^i$ of $S$ contained in the branched cover $\Sigma_n(P(R(U)))$. It follows that $\lambda^{\eta^{\scriptscriptstyle P\circ R}}_i= \lambda^{\eta^{\scriptscriptstyle R}}_i$ is homologous to $w_R \mu^{(i)}_{R(U)}$ in $E_{R(U)}^{(i)}$ and thus in $\Sigma_n(P(R(U)))$. To establish the second claim it is enough to observe that $\mu^{(i)}_{R(U)}$ is identified with $\lambda^{\eta_{P}}_i$ in $\Sigma_n(P(R(U)))$.

To establish the precise relationship between linking numbers stated as the third claim, notice that for any $j\neq i$ the surface $S^i$ is contained in the complement of $\lambda^{\eta^{\scriptscriptstyle P\circ R}}_j$ and so $$\lk_{\Sigma_n((P\circ R)(U))}(\lambda^{\eta^{\scriptscriptstyle P\circ R}}_i, \lambda^{\eta^{\scriptscriptstyle P\circ R}}_j)= %
w_R \lk_{\Sigma_n((P\circ R)(U))}(\lambda^{\eta_{P}}_i, \lambda^{\eta^{\scriptscriptstyle P\circ R}}_j)=%
w_R^2 \lk_{\Sigma_n((P\circ R)(U))}(\lambda^{\eta_{P}}_i, \lambda^{\eta_{P}}_j).$$
Here we are using the following fact: if $J_1$ and $J_2$ are rationally null-homologous knots which are homologous in the complement of another rationally null-homologous knot $K$ in a 3-manifold $Y$, then $\lk_Y(J_1,K) =\lk_Y(J_2,K) \in \Q$ (and not just in $\Q/\Z$) (see \cite[Chapter 10, Section 77]{st}).

Lastly,  consider an unknotting sequence for $R(U)$: that is, a collection $c_1, \dots, c_M$ of curves in $S^3$ each of which bounds a disk that intersects $R(U)$ in exactly two points and with opposite orientations, and such that $\pm1$ surgery along all these curves transforms $(S^3,  R(U))$ into $(S^3, U)$. Lifting each of these curves to each of the $n$ copies of $E_{R(U)}$ in Equation~\ref{coverP} gives us an $nM$-component link $\Gamma$ in $\Sigma_n((P\circ R)(U))$.  Doing surgery to $\Sigma_n((P\circ R)(U))$ along $\Gamma$ using the lifted framings results in $\Sigma_n(P(U))$. Moreover,  since each component of $\Gamma$ is null-homologous in some $E_{R(U)}^{(i)}$,  this is a null-homologous surgery from $\left(\Sigma_n((P \circ R)(U)), \cup_{i=1}^n {\mu_{R(U)}^{(i)}} \right)$ to $\left(\Sigma_n(P(U)), \cup_{i=1}^n {\mu_{U}^{(i)}} \right)$. Therefore,  since  null-homologous surgeries preserve linking numbers,  we have that $$\lk_{\Sigma_n((P\circ R)(U))}(\mu_{R(U)}^i, {\mu_{R(U)}}^j)= \lk_{\Sigma_n(P(U))}(\mu_{U}^i, \mu_{U}^j) \text{ for all }i \neq j.$$ But $\mu^{(i)}_{R(U)}$ is identified with $\eta^i_P$ in $\Sigma_n(P(R(U))$ and $\mu^{(i)}_U$ is identified with $ \eta^i_P$ in $\Sigma_n(P(U))$, so 
$$\lk_{\Sigma_n((P\circ R)(U))}(\lambda^{\eta_{P}}_i, \lambda^{\eta_{P}}_j)=\lk_{\Sigma_n(P(U))}(\lambda^{\eta_{P}}_i, \lambda^{\eta_{P}}_j)$$
and our desired result follows. \end{proof}

We are now ready to prove \Cref{thm:maincomp}.

\begin{reptheorem}{thm:maincomp}
Let $P$ be a pattern in the solid torus with winding number $w$ and $q$ a prime power dividing $w$. Denote the $q$ lifts of $\eta$ to $\Sigma_q(P(U))$ by $\eta_1,\dots,\eta_q$. 
Suppose that for $i \neq j$, the numbers $\lk(\eta_i, \eta_j)$ are nonnegative and not identically zero,  and let $n$ be the order of $[\eta_1]$ in $H_1(\Sigma_q(P(U))$. 
If $R$ is any pattern whose winding number is a nonzero multiple of $n$, then the composition $P \circ R$ does not induce a pseudo-homomorphism. 
\end{reptheorem}

\begin{proof}
While the lifts of $\eta_P$ may not be null-homologous, if $n$ divides $w_R$ then by \Cref{prop:composition}\eqref{comp-lambdas} the lifts of $\eta_{P\circ R}$ are null-homologous in $\Sigma_q(P\circ R(U))$. Moreover, it follows from \Cref{prop:composition}\eqref{comp-lk} that the lifted curves $\eta_i$ in $\Sigma_q(P \circ R(U))$ also have non-negative linking which is not identically vanishing. \Cref{thm:mainnullhom} now implies that $P \circ R$ cannot be a pseudo-homomorphism. 
\end{proof}

\bibliography{references}
\bibliographystyle{plain}

\end{document}